\newtheorem{theorem}{Theorem}[section]
\newtheorem{corollary}[theorem]{Corollary}
\newtheorem{lemma}[theorem]{Lemma}
\newtheorem{proposition}[theorem]{Proposition}
\newtheorem{remark}[theorem]{Remark}
\numberwithin{equation}{section}
\numberwithin{figure}{section}
\def\C{{\mathbb C}}
\def\R{{\mathbb R}}
\def\E{{{\mathbb E}\,}}
\def\Z{{\mathbb Z}}
\def\G{{\cal G}}
\def\N{{\mathbb N}}
\def\sgn{{\mathop {{\rm sgn\, }}}}
\def\Var{{\mathop {{\rm Var\, }}}}
\def\Cov{{\mathop {{\rm Cov\, }}}}
\def\square{{\vcenter{\vbox{\hrule height.3pt
				\hbox{\vrule width.3pt height5pt \kern5pt
					\vrule width.3pt}
				\hrule height.3pt}}}}
\def\tlint{{- \kern-0.85em \int \kern-0.2em}}
\def\dlint{{- \kern-1.05em \int \kern-0.4em}}
\def\sS {{\cal S}}
\def\ga{{\gamma}}
\newenvironment{proof}[1][Proof]{\noindent\textit{#1.} }{\hfill \rule{0.5em}{0.5em}}
\begin{document}
	\title{Fractional derivatives of local times for some Gaussian processes}	
	\date{\today}
	\author{Minhao Hong, Qian Yu}
	\maketitle
	\begin{abstract}
		In this article, we consider fractional derivatives of local time for $d-$dimensional centered Gaussian processes satisfying certain strong local non-determinism property. We first give a condition for existence of fractional derivatives of the local time defined by Marchaud derivatives in $L^p(p\ge1)$ and show that these derivatives are H\"older continuous with respect to both time and space variables and are also continuous with respect to the order of derivatives. Moreover, under some additional assumptions, we show that this condition is also necessary for existence of derivatives of the local time with the help of contour integration.
		\vskip.2cm\noindent{\bf Keywords:} Gaussian processes, Local time, Marchaud derivatives, Strong local non-determinism, Contour integration.
		\vskip.2cm\noindent{\bf Subject Classification:} Primary %
		 60F25; Secondary 
		 26A33,
		 60G15
	\end{abstract}
\section{Introduction}
Consider a $d$-dimensional centered Gaussian process  $$X=(X^1,\cdots,X^d)=\{X_t=(X^1_t,\dots, X^d_t),\, t\geq 0\}$$ whose components are independent copies of some one-dimensional centered Gaussian process. We assume that $X$ satisfies  the following strong local nondeterminism property {\bf (SLND)} with index $H\in (0,1)$: there exists a positive constant $\kappa_H$ such that for any integer $m\ge 1$, any times $0=s_0<s_1,\dots,s_m,t<\infty$,  
\begin{equation}
	\Var\Big(X_t^1| X^1_{s_1},\dots, X^1_{s_m}\Big)\geq\kappa_H \min\{|t-s_1|^{2H},\cdots,|t-s_m|^{2H},t^{2H}\}. \label{slndp}
\end{equation}
Denote $G^{d,H}_S$ to be the class of all such $d-$dimensional centered Gaussian process $X$ and ${G}^{d,H}_{S,U}$ to be the class of $X\in G^{d,H}_S$ possessing the following property about second moments of increments: There exists a positive constant $K_H$ depending only on $H$ such that 
\begin{equation}
	\E(X^1_t-X^1_s)^2\le K_H(t-s)^{2H} \label{smoi}
\end{equation}
for all $t,s\geq0$. And denote $\widetilde{G}^{d,H}_{S,U}$ to be the class of $X\in {G}^{d,H}_{S,U}$ possessing the following property about covariance of increments: there exists a non-negative decreasing function $\beta_H(\gamma):(1,\infty)\to\mathbb{R}$ with $\lim\limits_{\gamma\to\infty}\beta_H(\gamma)=0$, s.t. for all $0<s<t<\infty$ and $\gamma>1$ with $\frac{t-s}{s}\le\frac{1}{\gamma}$, 
\begin{align}\label{coi}
	\big|\E\big[(X^{1}_{t}-X^{1}_{s})X^{1}_{s}\big]\big|\leq  \beta_H(\gamma)\, \big[\E\big(X^{1}_{t}-X^{1}_{s}\big)^2\big]^{\frac{1}{2}}\big[\E\big(X^{1}_{s}\big)^2 \big]^{\frac{1}{2}}.
\end{align}
Moreover, denote $\widehat{G}^{d,H}_{S,U}$ to be the class of $X\in {G}^{d,H}_{S,U}$ satisfying $\E(X^1_tX^1_s)\ge0$ for all $t,s\ge0$.

 From results in \cite{bgt2004}, \cite{hv2003}, \cite{sxy2019} and Corollary \ref{bfbm}, \ref{sfbm} in the appendix, we can easily see that $d$-dimensional Gaussian processes given below are in $\widetilde{G}^{d,H}_{S,U}\cap\widehat{G}^{d,H}_{S,U}$:
 
 \noindent
 (i) {\it Bi-fractional Brownian motion (bi-fBm)}. The covariance function for components of this process is given by
 \[
 \mathrm{E}(X^{\ell}_t X^{\ell}_s)=2^{-K_0}\big[(t^{2H_0}+s^{2H_0})^{K_0}-|t-s|^{2H_0K_0}\big],
 \]
 where $H_0\in(0,1)$ and $K_0\in(0,1]$.  Here $H=H_0K_0$ and $K_0=1$ gives the classic fractional Brownian motion (fBm) case with Hurst parameter $H=H_0$.
 
 \noindent
 (ii) {\it Sub-fractional Brownian motion (sub-fBm)}.  The covariance function for components of this process  is given by
 \[
 \mathrm{E}(X^{\ell}_t X^{\ell}_s)=t^{2H}+s^{2H}-\frac{1}{2}\big[(t+s)^{2H}+|t-s|^{2H}\big],
 \]
 where $H\in(0,1)$.
 
 For $0<\alpha<1$ and $f:\R\to\R$, define operator $$D^\alpha_{\pm}f(x):=\frac{\alpha}{\Gamma(1-\alpha)}\int_0^{\infty}\frac{f(x)-f(x\mp y)}{y^{\alpha+1}}dy,$$
 where in this article $\Gamma$ is the classic Gamma function. $D^\alpha_{+}$ and $D^{\alpha}_{-}$ are called right-handed and left-handed Marchaud fractional derivatives of order $\alpha$, see \cite{skm1993} for more properties about Marchaud derivatives and \cite{djwz2024,djwx2024} for related SPDEs. Using Lemma 4 in \cite{y1985} gives that if $f$ is $\beta-$H\"older continuous for $\beta\in(0,1]$, which means that for some $K>0$, 
 \[|f(x+y)-f(x)|\le K|y|^{\beta}\text{ for all }x,y\in\R,\]
 then $D^\alpha_{\pm}f(x)$ is $(\beta-\alpha)-$H\"older continuous and $D^\alpha_{\pm}f\in L^2(\R)\cap L^1(\R)$ for any $\alpha\in(0,\beta)$. Using Lemma 5.4 in \cite{b1970} and Remark 1.4 in \cite{hx2020}, we get that for $X\in G^{d,H}_{S}$, the local time $$L(T,x)=\int_{0}^{T}\delta(X_t+x)dt$$ exists in $L^p(p\ge1)$ and has a modification which is $\theta-$H\"older continuous with respect to $x$ for all $\theta\in(0,1\wedge(\frac{1}{H}-d))$, where in this article $\delta$ is the Dirac function and $a\wedge b=\min\{a,b\}$ for all $a,b\in\R$. Then $D^\alpha_{\pm}L(T,\cdot)(x)$ is $(\theta-\alpha)-$H\"older continuous for all $\alpha\in(0,\theta)$ and $\theta\in(0,1\wedge(\frac{1}{H}-d))$, which is called the fractional derivatives of local time for $X$, see e.g. \cite{s2013,y2016,y1985} and references there in. In fact, fractional derivatives of local time has appeared in limit theorems of occupation time of certain self-similar processes such as Bm(\cite{y1986}), stable process(\cite{yg1992}), fBm(\cite{oo2009}), sub-fBm(\cite{s2013}) and recently, some self-similar Gaussian processes(\cite{l2026}). 
 
  When $0<\alpha<1$, using Fourier inverse transform and Lemma \ref{Ff}, which is obtained by contour integration, we get 
 \begin{align*}
 	D^\alpha_{\pm}f(x)=&\frac{\alpha}{\Gamma(1-\alpha)}\int_0^{\infty}\frac{f(x)-f(x\mp y)}{y^{\alpha+1}}dy,\\=&\frac{1}{2\pi}\int_{\R}\Big(\frac{\alpha}{\Gamma(1-\alpha)}\int_0^{\infty}\frac{1}{y^{\alpha+1}}\big(1-e^{\iota \mp y u}\big)dy\Big)\widehat{f}(u)e^{\iota x u}du\\=&\frac{1}{2\pi}\int_{\R}|u|^{{\alpha}}e^{\pm\iota{\alpha}\frac{\pi}{2}\sgn(u)}\widehat{f}(u)e^{\iota xy}du,
 \end{align*}
 where in this article $\iota=\sqrt{-1}$ and $\mathrm{sgn}(u)=\left\{\begin{array}{cl}
 	1,&u>0;\\0,&u=0;\\-1,&u<0. 
 \end{array}\right.$. Now we extend the definition of $D^\alpha_{\pm}f$ to all $\alpha\ge0$ by letting
 \begin{align*}
 	D^\alpha_{\pm}f(x):=\frac{1}{2\pi}\int_{\R}|u|^{{\alpha}}e^{\pm\iota{\alpha}\frac{\pi}{2}\sgn(u)}\widehat{f}(u)e^{\iota xu}du
 \end{align*}
 for any $\alpha\ge0$. Notice that when $\alpha$ is a non-negative integer, using Fourier inverse transform, 
 \begin{align*}
 	\frac{d^\alpha}{dx^\alpha}f(x)&=\frac{d^\alpha}{dx^\alpha}\Big(\frac{1}{2\pi}\int_{\R}\widehat{f}(u)e^{\iota xu}du\Big)\\&=\frac{1}{2\pi}\int_{\R}(\iota u)^{\alpha}\widehat{f}(u)e^{\iota xu}du\\&=\frac{1}{2\pi}\int_{\R}|u|^{{\alpha}}e^{+\iota{\alpha}\frac{\pi}{2}\sgn(u)}\widehat{f}(u)e^{\iota xu}du=D^{\alpha}_{+}f(x),
 \end{align*} 
from which we can get that $D^{\alpha}_{+}f(x)$ is still the $\alpha-$th derivative of $f$ in this case. Moreover, for non-negative integer $\alpha$, 
 \begin{align*}
 	D^{\alpha}_{-}f(x)&=\frac{1}{2\pi}\int_{\R}|u|^{{\alpha}}e^{-\iota{\alpha}\frac{\pi}{2}\sgn(u)}\widehat{f}(u)e^{\iota xu}du\\&=\frac{1}{2\pi}\int_{\R}(-\iota u)^{\alpha}\widehat{f}(u)e^{\iota xu}du\\&=(-1)^\alpha\frac{1}{2\pi}\int_{\R}(\iota u)^{\alpha}\widehat{f}(u)e^{\iota xu}du=(-1)^\alpha D^{\alpha}_{+}f(x),
 \end{align*}
 which means $D^{\alpha}_{+}f(x)$ is the $\alpha-$th derivative of $f$ only when $\alpha$ is even.

   Now for $x=(x_1,\cdots,x_d)$, consider $d-$dimensional heat kernel function 
\[p_{\varepsilon}(x)=\frac{1}{(2\pi\varepsilon)^{\frac{d}{2}}}e^{-\frac{|x|^2}{2\varepsilon}}=\frac{1}{(2\pi)^d}\int_{\R^d}e^{\iota y\cdot x}e^{-\frac{\varepsilon}{2}|y|^2}dy\]
and for multi-index $\boldsymbol{\alpha}=(\alpha_1,\cdots,\alpha_d)$ with non-negative real numbers $\alpha_i$, the right-handed and left-handed  $\boldsymbol{\alpha}-$th partial derivatives of $p_{\varepsilon}(x)$ are given by 
\begin{align}\label{fdalpha}
	D^{(\boldsymbol{\alpha})}_{\pm}p_{\varepsilon}(x)=\frac{1}{(2\pi)^d}\int_{\R^d}\Big(\prod_{\ell=1}^{d}|{y_\ell}|^{{\alpha}_\ell}{e}^{\pm\iota{\frac{\pi{\alpha}_\ell}{2}\mathrm{sgn}(y_\ell)}}\Big)e^{\iota y\cdot x}e^{-\frac{\varepsilon}{2}|y|^2}dy,
\end{align}

Like \cite{ghx2019}, for any $T>0$, $x\in\R^d$ and $X\in G^{d,H}_{S}$, define 
\begin{align}\label{approx}
	L^{(\boldsymbol{\alpha})}_{\pm,\varepsilon}(T,x):=\int_{0}^{T}D^{(\boldsymbol{\alpha})}_{\pm}p_{\varepsilon}(X_t+x)dt.
\end{align}
If $L^{(\boldsymbol{\alpha})}_{+,\varepsilon}(T,x)$ or $L^{(\boldsymbol{\alpha})}_{-,\varepsilon}(T,x)$ converges in $L^p$ when $\varepsilon\downarrow0$, we denote the limit by $L^{(\boldsymbol{\alpha})}_{+}(T,x)$ or $L^{(\boldsymbol{\alpha})}_{-}(T,x)$ and call it the right-handed or  left-handed $\boldsymbol{\alpha}-$th derivative of local time for $X$. If it exists, $L^{(\boldsymbol{\alpha})}_{\pm}(T,x)$ admits the following $L^p-$representation 
\[L^{(\boldsymbol{\alpha})}_{\pm}(T,x):=\int_{0}^{T}D^{(\boldsymbol{\alpha})}_{\pm}\delta(X_t+x)dt.\]
\begin{remark}\label{inca}
	Because the components of $X\in G^{d,H}_{S}$ are all centered Gaussian Processes and the fact that
	\begin{align*}
		L^{(\boldsymbol{\alpha})}_{+,\varepsilon}(T,x)&=\frac{1}{(2\pi)^d}\int_{0}^{T}\int_{\R^d}\Big(\prod_{\ell=1}^{d}|{y_\ell}|^{{\alpha}_\ell}{e}^{+\iota{\frac{\pi{\alpha}_\ell}{2}\mathrm{sgn}(y_\ell)}}\Big)\prod_{\ell=1}^de^{\iota y_\ell(X^\ell_t+x)}e^{-\frac{\varepsilon}{2}|y|^2}dydt\\&=\frac{1}{(2\pi)^d}\int_{0}^{T}\int_{\R^d}\Big(\prod_{\ell=1}^{d}|{y_\ell}|^{{\alpha}_\ell}{e}^{-\iota{\frac{\pi{\alpha}_\ell}{2}\mathrm{sgn}(y_\ell)}}\Big)\prod_{\ell=1}^de^{\iota y_\ell(-X^\ell_t-x)}e^{-\frac{\varepsilon}{2}|y|^2}dydt,
	\end{align*}
$L^{(\boldsymbol{\alpha})}_{+,\varepsilon}(T,x)$ has the same distribution as $L^{(\boldsymbol{\alpha})}_{-,\varepsilon}(T,-x)$. Then we can get  $L^{(\boldsymbol{\alpha})}_{+,\varepsilon}(T,x)$ converges in $L^p$ if and only if $L^{(\boldsymbol{\alpha})}_{-,\varepsilon}(T,-x)$ converges in $L^p$ and at this time $L^{(\boldsymbol{\alpha})}_{+}(T,x)$ and $L^{(\boldsymbol{\alpha})}_{-}(T,-x)$ has the same distribution.
\end{remark}
\begin{remark}
	When $\alpha_1,\cdots,\alpha_d$ are all non-negative integers, $L^{(\boldsymbol{\alpha})}_{+}(T,x)$ becomes the $\boldsymbol{\alpha}$-th derivative of local time $L^{(\boldsymbol{\alpha})}_1(T,x)$ in Remark 1.4 in \cite{hx2020}. In particular, $L^{(\boldsymbol{0})}_1(T,x)$ is just the local time of the $d$-dimensional Gaussian process $X$ at $-x$. When $X^1$ are fBm, bi-fBm or sub-fBm, $L^{(\boldsymbol{\alpha})}_1(T,x)$ exists in $L^2$ if and only if $H(2|\boldsymbol{\alpha}|+d)<1$ from the results of \cite{hx2020,hx2021}, where $|\boldsymbol{\alpha}|=\sum_{\ell=1}^{d}\alpha_\ell$ for non-negative integers $\alpha_1,\cdots,\alpha_d$. For more about local time and its integer-order derivatives for stochastic processes or random fields, we recommend e.g. \cite{no2007,s2011,wx2010,xy2023,y2014,yyc2017,ysz2015,zsy2023} and references there in. 
\end{remark}

   In this paper, we consider the case when one of $\alpha_1,\cdots,\alpha_d$ is not integer, where the existence of $L^{(\boldsymbol{\alpha})}_{\pm}(T,x)$ in $L^p$ still remains to be solved. Given any non-negative real numbers $\alpha_1,\cdots,\alpha_d$, we denote $|\boldsymbol{\alpha}|=\sum_{\ell=1}^{d}\alpha_\ell$ and the following are our main results. First, we give a sufficient condition for the existence of $L^{(\boldsymbol{\alpha})}_{\pm}(T,x)$ in $L^p$ and then show its H\"older continuity w.r.t both $T$ and $x$. Moreover, we will also show that $L^{(\boldsymbol{\alpha})}_{\pm}(T,x)$ is continuous in $L^p$ w.r.t. the order of derivatives $\boldsymbol{\alpha}$.
\begin{theorem}\label{suf}
	Suppose that $X=\{X_t,t\ge0\}$ are Gaussian processes in $G^{d,H}_S$ with $H\in(0,1)$, $x=(x_1,x_2,\cdots,x_d)\in\R^d$ and the components of multi-index $\boldsymbol{\alpha}=(\alpha_1,\cdots,\alpha_d)$ satisfy $\alpha_\ell\ge0$ for all $\ell=1,2,\cdots,d$. Then if  $H(2|\boldsymbol{\alpha}|+d)<1$, $L^{(\boldsymbol{\alpha})}_{+}(T,x)$ and $L^{(\boldsymbol{\alpha})}_{-}(T,x)$ exist in $L^p$ for any $p\in[1,\infty)$. Moreover, the following statements exist: 
	
	{\rm (1).} $L^{(\boldsymbol{\alpha})}_{\pm}(T,x)$ has a continuous modification which is locally $\theta_1-$H\"older continuous w.r.t. time variable $T$ for all $\theta_1\in(0,1-H(|\boldsymbol{\alpha}|+d))$.
	
	{\rm (2).} $L^{(\boldsymbol{\alpha})}_{\pm}(T,x)$ has a continuous modification which is locally $\theta_2-$H\"older continuous w.r.t. space variable $x$ for all $\theta_2\in(0,1\wedge\frac12(\frac{1}{H}-2|\boldsymbol{\alpha}|-d))$;
	
	{\rm (3).}  For $\boldsymbol{\beta}=(\beta_1,\beta_2,\cdots,\beta_d)\in[0,\infty)^d$ with $H(2|\boldsymbol{\beta}|+d)<1$, if $\beta_\ell$ converges to $\alpha_\ell$ for all $\ell=1,2,\cdots,d$, then $L^{(\boldsymbol{\beta})}_{\pm}(T,x)$ converges to $L^{(\boldsymbol{\alpha})}_{\pm}(T,x)$ in $L^p$.
\end{theorem}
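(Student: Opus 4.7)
My plan is to prove existence by showing $\{L^{(\boldsymbol{\alpha})}_{\pm,\varepsilon}(T,x)\}_{\varepsilon>0}$ is Cauchy in $L^p$ for every even integer $p=2m$, then obtain \textrm{(1)}--\textrm{(3)} from refinements of the same moment bound combined with Kolmogorov's continuity theorem and dominated convergence. Using the Fourier representation \eqref{fdalpha}--\eqref{approx}, the independence of the components $X^1,\ldots,X^d$ and the Gaussian characteristic function, I would express
\[
\E\big|L^{(\boldsymbol{\alpha})}_{\pm,\varepsilon}(T,x)-L^{(\boldsymbol{\alpha})}_{\pm,\varepsilon'}(T,x)\big|^{2m}
\]
as a $(2m)$-fold space-time integral of the schematic form
\[
\frac{1}{(2\pi)^{2md}}\int_{[0,T]^{2m}}\!\!\int_{\R^{2md}} \prod_{j=1}^{2m}\prod_{\ell=1}^{d}|y_{j,\ell}|^{\alpha_\ell}\cdot \Phi_{\varepsilon,\varepsilon'}(y)\cdot \prod_{\ell=1}^{d}\exp\Big(-\tfrac12 \Var\big(\sum_{j}y_{j,\ell}X^{\ell}_{t_j}\big)\Big)\, dy\, dt,
\]
where the factor $\Phi_{\varepsilon,\varepsilon'}$ gathers the mollifier differences and phases, is uniformly bounded by $2$, and vanishes pointwise as $\varepsilon,\varepsilon'\downarrow 0$.

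The analytic heart of the argument is to dominate this integral by a quantity independent of $\varepsilon,\varepsilon'$, finite precisely when $H(2|\boldsymbol{\alpha}|+d)<1$. Restricting by symmetry to the simplex $0<t_1<\cdots<t_{2m}<T$ with $t_0=0$ and making the triangular substitution $u_{j,\ell}=\sum_{k\geq j}y_{k,\ell}$ coordinatewise, the SLND property \eqref{slndp} yields the lower bound
\[
\Var\Big(\sum_j y_{j,\ell}X^\ell_{t_j}\Big)\geq \kappa_H\sum_{j=1}^{2m}u_{j,\ell}^2\,(t_j-t_{j-1})^{2H},
\]
decoupling the $u$-integral into one-dimensional Gaussians. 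The elementary estimate $|y_{j,\ell}|^{\alpha_\ell}=|u_{j,\ell}-u_{j+1,\ell}|^{\alpha_\ell}\leq C(|u_{j,\ell}|^{\alpha_\ell}+|u_{j+1,\ell}|^{\alpha_\ell})$ separates variables, and expansion across $j$ and $\ell$ reduces every resulting term to $\prod_{j=1}^{2m}(t_j-t_{j-1})^{-H(\gamma_{j}+d)}$ for nonnegative exponents $\gamma_j$ with $\sum_j \gamma_j = 2m|\boldsymbol{\alpha}|$. The time integral over the simplex converges iff $H(\gamma_j+d)<1$ for every $j$; the worst case $\gamma_j=2|\boldsymbol{\alpha}|$ gives exactly $H(2|\boldsymbol{\alpha}|+d)<1$. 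The Cauchy property then follows from dominated convergence applied to $\Phi_{\varepsilon,\varepsilon'}\to 0$.

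For \textrm{(1)}, restricting the time integral to $[T\wedge T',T\vee T']$ and rescaling $t_j=(T\wedge T')+s_j|T-T'|$ produces an extra factor $|T-T'|^{2m(1-H(|\boldsymbol{\alpha}|+d))}$, after which Kolmogorov's criterion yields a modification locally $\theta_1$-H\"older in $T$ for any $\theta_1<1-H(|\boldsymbol{\alpha}|+d)$. For \textrm{(2)}, inserting $|e^{\iota y\cdot x}-e^{\iota y\cdot x'}|\leq 2^{1-\theta}|y|^{\theta}|x-x'|^{\theta}$ in each of the $2m$ Fourier factors effectively replaces $|\boldsymbol{\alpha}|$ by $|\boldsymbol{\alpha}|+\theta$ in the integrability threshold, which holds whenever $\theta<\frac12(\frac{1}{H}-2|\boldsymbol{\alpha}|-d)$, and again Kolmogorov delivers the desired H\"older modification up to the additional constraint $\theta\leq 1$. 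For \textrm{(3)}, the integrand of the corresponding moment integral for $L^{(\boldsymbol{\beta})}_{\pm,\varepsilon}-L^{(\boldsymbol{\alpha})}_{\pm,\varepsilon}$ contains factors $\big||y_{j,\ell}|^{\beta_\ell}e^{\pm\iota\frac{\pi\beta_\ell}{2}\sgn(y_{j,\ell})}-|y_{j,\ell}|^{\alpha_\ell}e^{\pm\iota\frac{\pi\alpha_\ell}{2}\sgn(y_{j,\ell})}\big|$ that vanish pointwise, and a uniform domination by $C\prod_{j,\ell}(1+|y_{j,\ell}|^{\alpha_\ell+\eta})$ for some small $\eta>0$ chosen so that $H(2(|\boldsymbol{\alpha}|+d\eta)+d)<1$ still holds, followed by the same existence bound, permits passage to the limit by dominated convergence.

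The main obstacle I anticipate is the combinatorial bookkeeping in the worst-case analysis: the expansion $\prod_{j,\ell}(|u_{j,\ell}|^{\alpha_\ell}+|u_{j+1,\ell}|^{\alpha_\ell})$ produces $2^{2md}$ terms with various distributions of exponents $\{\gamma_j\}$, and one must verify the integrability condition $H(\gamma_j+d)<1$ uniformly across all such terms. A secondary technicality is the boundary increment at $j=1$, where one invokes the $t^{2H}$ term in \eqref{slndp} in place of a difference, but this fits seamlessly into the same Gaussian decoupling and does not alter the final exponent.
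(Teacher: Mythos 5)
Your overall architecture (even moments of $L^{(\boldsymbol{\alpha})}_{\pm,\varepsilon}-L^{(\boldsymbol{\alpha})}_{\pm,\varepsilon'}$, domination uniform in $\varepsilon$, Kolmogorov for (1)--(2), dominated convergence with a $(1+|y|)^{\alpha_\ell+\eta}$ majorant for (3)) matches the paper, but the analytic core of your existence bound has a genuine gap. You pass to the variables $u_{j,\ell}=\sum_{k\ge j}y_{k,\ell}$ and claim that \eqref{slndp} yields
\begin{equation*}
\Var\Big(\sum_j y_{j,\ell}X^\ell_{t_j}\Big)=\Var\Big(\sum_j u_{j,\ell}\big(X^\ell_{t_j}-X^\ell_{t_{j-1}}\big)\Big)\;\ge\;\kappa_H\sum_{j=1}^{2m}u_{j,\ell}^2\,(t_j-t_{j-1})^{2H},
\end{equation*}
which is what decouples your $u$-integral into one-dimensional Gaussians. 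This summed lower bound is \emph{not} a consequence of the hypothesis \eqref{slndp} defining $G^{d,H}_S$. What \eqref{slndp} gives directly, via $\Var(Z)\ge\Var(Z\mid\mathcal G)$ for Gaussian vectors, are single-term bounds: conditioning on the past yields only the last term $\kappa_H u_{2m,\ell}^2(t_{2m}-t_{2m-1})^{2H}$, and conditioning on all the other points yields $\kappa_H\,y_{j,\ell}^2\min\{(t_j-t_{j-1})^{2H},(t_{j+1}-t_j)^{2H}\}$ (note the coefficient is $y_{j,\ell}=u_{j,\ell}-u_{j+1,\ell}$, not $u_{j,\ell}$). A lower bound of the variance of a linear combination by the \emph{sum} of coefficients squared times conditional variances is false for general Gaussian vectors, and while the decoupled inequality above can be proved for concrete processes such as fBm via spectral or moving-average representations, the theorem is stated for the whole class $G^{d,H}_S$, where only \eqref{slndp} is available. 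Since every subsequent step of your existence argument (the separation $|y_{j,\ell}|^{\alpha_\ell}\le C(|u_{j,\ell}|^{\alpha_\ell}+|u_{j+1,\ell}|^{\alpha_\ell})$, the exponent bookkeeping $\prod_j(t_j-t_{j-1})^{-H(\gamma_j+d)}$, and the threshold $H(2|\boldsymbol{\alpha}|+d)<1$) rests on this decoupling, the gap is structural, not cosmetic.

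The paper achieves the same decoupling legitimately by a different device: apply H\"older's inequality in the $y_{\cdot\ell}$ variables so that all powers load onto one index $j$ at a time, then use the exact identity of Lemma \ref{Lem:cd1982} (Cuzick--Du Preez), which converts the Gaussian integral into $\det\Cov(X^1_{t_1},\dots,X^1_{t_m})^{-1/2}\prod_j\sigma_{t_j}^{-\alpha_\ell}$, and only then invoke \eqref{decomCov} together with \eqref{slndp} to bound the determinant and the conditional variances $\sigma_{t_j}$ from below; the $\min$ is expanded as $\min\{a,b\}^{-H\alpha}\le a^{-H\alpha}+b^{-H\alpha}$, producing the same worst-case exponent $H(2|\boldsymbol{\alpha}|+d)$. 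If you replace your decoupling step by this H\"older/Cuzick--Du Preez argument (and, in part (1), treat the first increment $t_1-t_0=t_1$ separately as the paper does via Lemma \ref{lmahx2020}, since it does not rescale with $h$), the remainder of your plan for (1)--(3) goes through essentially as in the paper.
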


Using Lemma \ref{Ff}, Lemma \ref{diff} and Lemma \ref{fouriert}, we will get that with some additional assumptions, $L^{(\boldsymbol{\alpha})}_{\pm,\varepsilon}(T,x)$ will diverges in $L^2$ as $\varepsilon\downarrow0$, which means that $H(2|\boldsymbol{\alpha}|+d)<1$ is the sufficient and necessary condition for the existence of $L^{(\boldsymbol{\alpha})}_{\pm}(T,x)$ in $L^2$ for those processes.

\begin{theorem}\label{nes}
	Use notations in Theorem \ref{suf} and $\Z$ denotes the set of all integer. Assume $X\in G^{d,H}_{S,U}$ and $H(2|\boldsymbol{\alpha}|+d)\ge1$. 
	
	When $x=0$, the following statements exist:
	
	{\rm (1).} If $Hd\le1$, $L^{(\boldsymbol{\alpha})}_{\pm,\varepsilon}(T,0)$ diverges in $L^2$ as $\varepsilon\downarrow0$;
	
	{\rm (2).} If $Hd>1$ and $\sum\limits_{\ell=1,\alpha_\ell\in\Z}^{d}\alpha_\ell$ is even, $L^{(\boldsymbol{\alpha})}_{\pm,\varepsilon}(T,0)$ diverges in $L^2$; 
	
	{\rm (3).} If $Hd>1$ and $\sum\limits_{\ell=1,\alpha_\ell\in\Z}^{d}\alpha_\ell$ is odd, $L^{(\boldsymbol{\alpha})}_{\pm,\varepsilon}(T,0)$ diverges in $L^2$ for $X\in\widehat{G}^{d,H}_{S,U}$.
	
	When $x\ne0$, the following statements exist:
	
	{\rm (4).} $L^{(\boldsymbol{\alpha})}_{\pm,\varepsilon}(T,x)$ diverges in $L^2$ as $\varepsilon\downarrow0$ for $X\in\widetilde{G}^{d,H}_{S,U}$ in the case that there exists an $\ell\in\{1,2,\cdots,d\}$ s.t. $\alpha_\ell$ is integer and $x_\ell\ne0$;
\end{theorem}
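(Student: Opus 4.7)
The plan is to compute $\E\bigl[L^{(\boldsymbol{\alpha})}_{\pm,\varepsilon}(T,x)^2\bigr]$ directly and to show it diverges as $\varepsilon\downarrow 0$ whenever $H(2|\boldsymbol{\alpha}|+d)\ge 1$. From the Fourier representation~(\ref{fdalpha})--(\ref{approx}) and independence of the components of $X$,
\begin{equation*}
\E\bigl[L^{(\boldsymbol{\alpha})}_{\pm,\varepsilon}(T,x)^2\bigr]=\frac{1}{(2\pi)^{2d}}\int_0^T\int_0^T\prod_{\ell=1}^d I_\ell(t,s,\varepsilon)\,dt\,ds,
\end{equation*}
where
\begin{equation*}
I_\ell(t,s,\varepsilon)=\int_{\R^2}|y|^{\alpha_\ell}|z|^{\alpha_\ell}\,e^{\pm\iota\frac{\pi\alpha_\ell}{2}(\sgn y+\sgn z)}e^{\iota(y+z)x_\ell}\,e^{-\frac{1}{2}(y,z)(V(t,s)+\varepsilon I_2)(y,z)^\top}dy\,dz,
\end{equation*}
and $V(t,s)$ is the $2\times 2$ covariance matrix of $(X^1_t,X^1_s)$, with $\sigma_t^2=\E[(X^1_t)^2]$ and $R=R(t,s)=\E[X^1_tX^1_s]$. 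The first step is to use Lemma~\ref{Ff} (contour integration) together with Lemmas~\ref{diff} and~\ref{fouriert} to rewrite each $I_\ell$ in closed form as an explicit function of $V(t,s)$ and $x_\ell$, pinning down the complex phase contributions of the non-integer components of $\boldsymbol{\alpha}$.

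The second step is a diagonal singularity analysis. SLND~(\ref{slndp}) with $m=1$ yields $\det(V(t,s)+\varepsilon I_2)\ge\kappa_H\,\sigma_s^2\min\{(t-s)^{2H},t^{2H}\}$, and (\ref{smoi}) gives $\sigma_t^2\le K_Ht^{2H}$. Localising to the diagonal neighbourhood $\{(s,t):0<s<t<T,\,t-s\le s/\gamma\}$ with $\gamma$ large, and invoking either~(\ref{coi}) (in the $\widetilde{G}$-case) or $R\ge 0$ (in the $\widehat{G}$-case), effectively diagonalises $V(t,s)+\varepsilon I_2$ in the basis $\{X^1_s,\,X^1_t-X^1_s\}$, so that the closed form for $I_\ell$ supplies the magnitude estimate
\begin{equation*}
|I_\ell(t,s,\varepsilon)|\,\gtrsim\,\sigma_s^{-(2\alpha_\ell+1)}(t-s)^{-H(2\alpha_\ell+1)}
\end{equation*}
uniformly in $\varepsilon$. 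Inserting this into the double integral produces a contribution of order $\int_0^T\int_s^{s+\delta s}(t-s)^{-H(2|\boldsymbol{\alpha}|+d)}s^{-H(2|\boldsymbol{\alpha}|+d)+d}\,dt\,ds$, which is infinite exactly when $H(2|\boldsymbol{\alpha}|+d)\ge 1$.

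The delicate step, and the main obstacle, is to propagate the magnitude estimate on $|\prod_\ell I_\ell|$ to an actual lower bound on $\prod_\ell I_\ell$, i.e.\ to rule out destructive cancellation among the complex phases in each subcase. In case~(1), $Hd\le 1$ contributes an extra integrable factor near $s=0$, so the divergence follows after passing to absolute values and applying Fatou. In case~(2), the hypothesis that $\sum_{\alpha_\ell\in\Z}\alpha_\ell$ is even makes the product of the integer phases $(\pm\iota)^{2\alpha_\ell}=(-1)^{\alpha_\ell}$ equal to $+1$, while the closed form from Lemma~\ref{Ff} gives a dominant real factor of definite sign for the non-integer components. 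In case~(3), with the integer sum odd, the missing sign is supplied by $R\ge 0$ in $\widehat{G}^{d,H}_{S,U}$: the Wick/Isserlis expansion of the integer-order contribution $\int y^{\alpha_\ell}z^{\alpha_\ell}e^{-\frac12(y,z)(V+\varepsilon I_2)(y,z)^\top}\,dy\,dz$ is a sum of terms in $\sigma_t^2,\sigma_s^2$ and $R$ whose odd powers of $R$ come paired (by parity of the total degree) with the $(-1)^{\alpha_\ell}$ factor so as to retain a definite overall sign. Finally, case~(4) is different in structure: a single integer $\alpha_\ell$ with $x_\ell\ne 0$ turns the corresponding $I_\ell$ into the classical $\alpha_\ell$-th partial derivative in $x_\ell$ of a bivariate Gaussian density with covariance $V(t,s)+\varepsilon I_2$, evaluated at the diagonal point $(x_\ell,x_\ell)$; this derivative is a non-vanishing smooth function of $(s,t)$ near the diagonal, and the covariance decay~(\ref{coi}) allows one to decouple it from the remaining $\ell'\ne\ell$ factors, reducing the argument to the analysis already carried out.
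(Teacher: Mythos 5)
Your starting point (writing $\E|L^{(\boldsymbol{\alpha})}_{\pm,\varepsilon}(T,x)|^2$ as a double time integral of a product of two-dimensional Gaussian-weighted integrals and locating the divergence on the diagonal) is the same as the paper's, but the proposal has genuine gaps at the points you yourself identify as delicate. First, step one is not available: there is no closed form for $I_\ell$ when $\alpha_\ell\notin\Z$. Lemma \ref{Ff} only evaluates a one-dimensional phase constant, and Lemma \ref{fouriert} rests on analyticity of $z^m$, so it applies only to integer exponents; the paper therefore never computes $I_\ell$ exactly, but diagonalizes by an explicit linear change of variables and then compares $J_+(\alpha_\ell,x_\ell,t,s)$ with tractable approximations $\widehat{J}_+$ (for $x=0$) or $\widetilde{J}_+$ (for $x\ne0$) via Lemma \ref{diff}, with quantitative error bounds from Propositions \ref{acd}, \ref{dgc-b}, \ref{a-b&c-b}. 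Second, your magnitude estimate $|I_\ell|\gtrsim\sigma_s^{-(2\alpha_\ell+1)}(t-s)^{-H(2\alpha_\ell+1)}$ is wrong: the correct size is $(s^{2H}+\varepsilon)^{-1/2}\big((t-s)^{2H}+\varepsilon\big)^{-(2\alpha_\ell+1)/2}$, i.e.\ only one power $\sigma_s^{-1}$ (coming from $G$) in the $s$-variable, and the $\varepsilon$'s cannot be dropped since each $I_\ell$ is finite for fixed $\varepsilon$; the ensuing double integral you display is likewise off. Third, in case (1) "passing to absolute values and applying Fatou" does not lower-bound $\E|L^{(\boldsymbol{\alpha})}_{+,\varepsilon}(T,0)|^2$, which is the integral of the oscillatory product, not of its modulus; this is precisely the cancellation problem, and the paper resolves it by a rate comparison: choose $f_{\boldsymbol{\alpha}}(\varepsilon)$ so that $f_{\boldsymbol{\alpha}}(\varepsilon)\big|\E|L|^2-\widehat{I}^{(\boldsymbol{\alpha})}_{+,\varepsilon}(0)\big|\to0$ while $f_{\boldsymbol{\alpha}}(\varepsilon)\widehat{I}^{(\boldsymbol{\alpha})}_{+,\varepsilon}(0)$ stays bounded below, the main term being manifestly nonnegative because $(-\iota w)_0^{\alpha}(\iota w)_0^{\alpha}=|w|^{2\alpha}$ (see \eqref{thm212}).

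For cases (2)--(3) your parity/Isserlis remarks touch the right ingredients but miss the key structural step. When $Hd>1$ the error terms in the main-term/error-term comparison are no longer negligible (the $s$-integral itself diverges), which is why the paper changes method there: it conditions on the integer-indexed components and uses Jensen, $\E|L|^2\ge\E\big[\E\big(L\mid X^\ell,\ell\in I\big)\big]^2$. Conditioning replaces every non-integer component by the deterministic nonnegative square $\big(\int_\R|v|^{\alpha_\ell}\cos(\tfrac{\pi\alpha_\ell}{2})e^{-v^2/2}dv\big)^2$, eliminating its phase, and the integer components are evaluated exactly by Lemma \ref{lmahx}, with the sign controlled by parity of $\sum_{\ell\in I}\alpha_\ell$ or by $B\ge0$, together with Proposition \ref{b} ($B\gtrsim s^{2H}$ near the diagonal). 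Without this conditioning, and without a closed form, your "dominant real factor of definite sign" for the non-integer components is unproven. In case (4), the components of $X$ are independent, so no decoupling across $\ell$ is needed; the hypothesis \eqref{coi} is used \emph{within} each factor, via Lemma \ref{bocoi} and Proposition \ref{a-b&c-b}, to show that $|A-B|$ and $|C-B|$ carry a small factor $\beta_0(\gamma)$, so that $J_+(\alpha_\ell,x_\ell,t,s)$ is close to $\widetilde{J}_+(\alpha_\ell,x_\ell,t,s)$; and for the integer index $\ell_0$ the contour shift of Lemma \ref{fouriert} is what extracts the damping factor $e^{-G^2x_{\ell_0}^2/2}$ and yields a main term of definite sign. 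Your description of $I_{\ell_0}$ as a "non-vanishing smooth function of $(s,t)$ near the diagonal" overlooks that it blows up like $\big((t-s)^{2H}+\varepsilon\big)^{-(2\alpha_{\ell_0}+1)/2}$ there (this blow-up is the source of divergence) and that its non-vanishing/definite sign is exactly what must be, and in the paper is, established through the estimates \eqref{thm231} and \eqref{thm222} with the $\gamma$-uniform bookkeeping.
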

\begin{remark}
	If $\alpha_\ell$ is not integer for all $\ell=1,2,\cdots,d$ with $x_\ell\ne0$, the method used in the proof of statement {\rm (4)} of Theorem \ref{nes} doesn't work, which is caused by the fact that a power function with fractional exponent is not analytic so we couldn't get the result in Lemma \ref{fouriert}. However, we think in this case $L^{(\boldsymbol{\alpha})}_{\pm,\varepsilon}(T,x)$ still diverges in $L^2$ as $\varepsilon\downarrow0$ for $X\in\widetilde{G}^{d,H}_{S,U}$, which remains to be proven.
\end{remark}

After some preliminaries in Section 2, Section 3 is devoted to the proof of Theorem \ref{suf} and Section 4 is devoted to the proof of Theorem \ref{nes}. In Section 5 we prove some technical lemmas with the help of contour integration  and in Section 6 we give the proof of {\bf SLND} \eqref{slndp} for bi-fBm and sfBm. Throughout this paper, if not mentioned otherwise, the letter $c$, 
with or without a subscript or superscript, denotes a generic positive finite constant whose exact value may change from line to line. We denote $\Z$ the set of all integer and $\C$ the set of all complex numbers. For any $x,y\in\mathbb{R}^d$, we denote $x\cdot y$ the usual inner product and $|x|=(\sum\limits^d_{i=1}|x_i|^2)^{1/2}$. Moreover, we denote 
\begin{align}\label{alphapower}
	(\iota x)_0^{\alpha}&:=|x|^{\alpha}e^{\iota\frac{\pi \alpha}{2}\sgn(x)}
	\end{align}
for any $x\in\R$ and
\begin{align}\label{alphastar}
\alpha^*&:=\widetilde{\alpha}+(\bar{\alpha}\wedge1)\mathbf{1}_{\{\widetilde{\alpha}=0\}}
\end{align} 
for any $\alpha\ge0$, where $\bar{\alpha}$ is the largest integer no greater than $\alpha$, $\widetilde{\alpha}=\alpha-\bar{\alpha}$ and $\mathbf{1}_{A}$ is the indicator function of some set $A$.
\section{Preliminaries}
In this section, we will give some lemmas prepared for the proof of our main result. First consider Gaussian vector $(Y_1, \dots, Y_n)$ and use $\mathrm{Cov}(Y_1, \dots, Y_n)$ to denote its covariance matrix, whose determinant can be evaluated by using the formula \begin{equation}\label{decomCov}
	\det \mathrm{Cov}(Y_1, \dots, Y_n) = \mathrm{Var}(Y_1) \prod_{m=1}^n \mathrm{Var}(Y_m|Y_1, \dots, Y_{m-1}),
\end{equation}
which can be found in Corollary A.2 in \cite{km2020}. Using these notations, we introduce a lemma from \cite{cd1982}, which will play an important role in the proof of Theorem \ref{suf}. 
\begin{lemma}[Lemma 2 of \cite{cd1982}]\label{Lem:cd1982}
	Let $Y_1, \dots, Y_n$ be mean zero Gaussian random variables that are linearly independent and assume that 
	$\int_\R g(v) e^{-\varepsilon v^2} dv < \infty$ for all $\varepsilon > 0$. Then
	\[ \int_{\R^n} g(v_1) \exp\bigg[ -\frac{1}{2} \mathrm{Var}\bigg( \sum_{l=1}^n v_l Y_l \bigg) \bigg] dv_1 \dots dv_n 
	= \frac{(2\pi)^{(n-1)/2}}{\det\mathrm{Cov}(Y_1, \dots, Y_n)^{1/2}}
	\int_\R g(v/\sigma_1) e^{-v^2/2} dv, \]
	where $\sigma_1^2 = \mathrm{Var}(Y_1|Y_2, \dots, Y_n)$.
\end{lemma}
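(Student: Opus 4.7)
The plan is to reduce the $n$-dimensional integral to a product of a one-dimensional integral (carrying the $g$) and an $(n-1)$-dimensional Gaussian integral, by orthogonally splitting $Y_1$ from the remaining variables. Concretely, let $\Sigma=\mathrm{Cov}(Y_1,\dots,Y_n)$ and $\Sigma'=\mathrm{Cov}(Y_2,\dots,Y_n)$. Since $Y_1,\dots,Y_n$ are linearly independent, the $L^2$-projection $\widehat Y_1=\sum_{l=2}^n b_l Y_l$ of $Y_1$ onto the span of $Y_2,\dots,Y_n$ exists, the residual $Z:=Y_1-\widehat Y_1$ is uncorrelated with $Y_2,\dots,Y_n$, and $\mathrm{Var}(Z)=\sigma_1^2>0$. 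Writing $\sum_{l=1}^n v_l Y_l=v_1 Z+\sum_{l=2}^n(v_l+v_1 b_l)Y_l$ and using the uncorrelatedness, I get the clean decomposition
\[
\mathrm{Var}\Bigl(\sum_{l=1}^n v_l Y_l\Bigr)=v_1^2\sigma_1^2+\mathrm{Var}\Bigl(\sum_{l=2}^n w_l Y_l\Bigr),\qquad w_l:=v_l+v_1 b_l.
\]

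The second step is the change of variables $(v_1,v_2,\dots,v_n)\mapsto(v_1,w_2,\dots,w_n)$, which is affine with Jacobian $1$ and leaves $g(v_1)$ untouched. Fubini then splits the integral as
\[
\Bigl(\int_{\R} g(v_1)e^{-v_1^2\sigma_1^2/2}\,dv_1\Bigr)\cdot\Bigl(\int_{\R^{n-1}}\exp\bigl(-\tfrac12 w^\top\Sigma' w\bigr)\,dw_2\cdots dw_n\Bigr).
\]
The integrability hypothesis $\int_{\R}g(v)e^{-\varepsilon v^2}dv<\infty$ (with $\varepsilon=\sigma_1^2/2$) legitimizes applying Fubini here.

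The third step is to evaluate each factor. The $(n-1)$-dimensional Gaussian integral equals $(2\pi)^{(n-1)/2}/\det(\Sigma')^{1/2}$ by the standard formula, and applying the conditional-variance determinant identity \eqref{decomCov} to both $(Y_1,\dots,Y_n)$ and $(Y_2,\dots,Y_n)$ yields $\det\Sigma=\sigma_1^2\det\Sigma'$. For the remaining one-dimensional integral, I substitute $v=\sigma_1 v_1$ (so $dv_1=dv/\sigma_1$) to obtain $\sigma_1^{-1}\int_\R g(v/\sigma_1)e^{-v^2/2}dv$. Multiplying the two factors, the $\sigma_1$ arising from $\det\Sigma'^{1/2}=\det\Sigma^{1/2}/\sigma_1$ cancels the $\sigma_1^{-1}$ from the substitution, giving exactly the claimed right-hand side.

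There is no real obstacle beyond bookkeeping: the crux is recognizing that the orthogonal decomposition $Y_1=\widehat Y_1+Z$ diagonalizes the quadratic form in the exponent along the $v_1$-direction, so that $g(v_1)$ can be isolated. The mild care-point is to verify that the affine change of variables does not change the domain (it maps $\R^{n-1}$ onto itself for fixed $v_1$) and that the Fubini step is justified, which follows from the assumed integrability of $g$ against $e^{-\varepsilon v^2}$ together with the positivity of $\sigma_1$.
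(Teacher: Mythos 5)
Your proof is correct. The paper itself gives no proof of this lemma---it is imported verbatim as Lemma~2 of \cite{cd1982}---so there is nothing in the text to compare against, but your argument (orthogonal decomposition $Y_1=\widehat Y_1+Z$ with $\mathrm{Var}(Z)=\sigma_1^2$, the unit-Jacobian shift $w_l=v_l+v_1b_l$, the Schur-complement identity $\det\Sigma=\sigma_1^2\det\Sigma'$, and the rescaling $v=\sigma_1 v_1$) is exactly the standard route and all the bookkeeping checks out; the only pedantic caveat is that the Fubini step needs the hypothesis read as absolute integrability of $g(v)e^{-\varepsilon v^2}$ (or $g\ge 0$), which is the intended reading.
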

Then we give a lemma about the covariance of increments of $X$ in $\widetilde{G}^{d,H}_{S,U}$, which will be used in the proof of statement {\rm (4)} of Theorem \ref{nes}.
\begin{lemma}\label{bocoi}
	 For $X$ in $\widetilde{G}^{d,H}_{S,U}$, there exists a positive constant $K$ and a non-negative decreasing function $\widetilde{\beta}(\gamma):(1,\infty)\to\mathbb{R}$ with $\lim\limits_{\gamma\to\infty}\widetilde{\beta}(\gamma)=0$, s.t. for all $t>s>0$ and $\gamma>1$, 
	\begin{align}
		\big|\E\big[(X_t^1-X_s^1)X_s^1\big]\big|\le K\big(\widetilde{\beta}(\gamma)s^{H}(t-s)^{H}+\gamma^H(t-s)^{2H}\big).
	\end{align}
\end{lemma}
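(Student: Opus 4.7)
My plan is to establish Lemma~\ref{bocoi} by a simple dichotomy on whether the ratio $(t-s)/s$ is at most $1/\gamma$ or not. A preliminary observation is that the SLND property \eqref{slndp} is formulated with the convention $s_0=0$ and contains $t^{2H}=|t-s_0|^{2H}$ on the right-hand side, so it implicitly presupposes $X_0^1=0$ almost surely; combined with \eqref{smoi} this yields the crude variance estimate $\E(X_s^1)^2 \le K_H s^{2H}$, which will feed into both cases below.

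In the first case, when $(t-s)/s \le 1/\gamma$, the hypothesis \eqref{coi} defining the class $\widetilde{G}^{d,H}_{S,U}$ applies verbatim. Plugging the bound $\E(X_t^1-X_s^1)^2 \le K_H(t-s)^{2H}$ and the variance estimate above into \eqref{coi} immediately produces a bound of the form $K_H \beta_H(\gamma)\, s^H(t-s)^H$, which is exactly the first summand in the claimed inequality (up to the constant).

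In the complementary case $s<\gamma(t-s)$, I would simply apply Cauchy--Schwarz together with \eqref{smoi} to bound $|\E(X_t^1-X_s^1)X_s^1|\le K_H s^H(t-s)^H$, and then use the case hypothesis to replace one factor of $s^H$ by $\gamma^H(t-s)^H$, yielding $K_H\gamma^H(t-s)^{2H}$, which matches the second summand.

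Setting $\widetilde{\beta}(\gamma):=\beta_H(\gamma)$ and $K:=K_H$, and observing that in each of the two cases the estimate obtained is dominated by the sum of the two terms on the right-hand side of the statement, completes the argument. There is no substantive obstacle: the lemma is really a quantitative way of saying that although \eqref{coi} is only assumed in the asymptotic regime $(t-s)/s\le 1/\gamma$, it already implies a bound valid for all $t>s>0$ once one allows the weaker Cauchy--Schwarz remainder $\gamma^H(t-s)^{2H}$ in the opposite regime.
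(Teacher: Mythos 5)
Your proposal is correct and follows essentially the same route as the paper: the same dichotomy on whether $\frac{t-s}{s}\le\frac{1}{\gamma}$, with \eqref{coi} and \eqref{smoi} giving the term $K_H\beta_H(\gamma)s^H(t-s)^H$ in the first regime, and Cauchy--Schwarz plus \eqref{smoi} plus the case hypothesis giving $K_H\gamma^H(t-s)^{2H}$ in the second. Your preliminary remark that $\E(X_s^1)^2\le K_H s^{2H}$ rests on the implicit normalization $X_0^1=0$ is a fair observation, but the paper uses exactly the same bound, so there is no divergence in substance.
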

\begin{proof}
	 When $t>s$, we have 	\begin{align*}
		\big|\E\big[(X^{1}_{t}-X^{1}_{s})X^{1}_{s}\big]\big|&\leq  \beta_H(\gamma) \big[\E\big(X^{1}_{t}-X^{1}_{s}\big)^2\big]^{\frac{1}{2}}\big[\E\big(X^{1}_{s}\big)^2 \big]^{\frac{1}{2}}\\&\leq K_{H}\beta_H(\gamma)s^{H}(t-s)^{H}
	\end{align*}
for all $t>s>0$ and $\gamma>1$ with $\frac{t-s}{s}\le\frac{1}{\gamma}$ by \eqref{smoi} and \eqref{coi}. And we have 
\begin{align*}
	\big|\E\big[(X^{1}_{t}-X^{1}_{s})X^{1}_{s}\big]\big|&\leq  \big[\E\big(X^{1}_{t}-X^{1}_{s}\big)^2\big]^{\frac{1}{2}}\big[\E\big(X^{1}_{s}\big)^2 \big]^{\frac{1}{2}}\\&\leq K_H (t-s)^Hs^H\\&\le K_H \gamma^H(t-s)^{2H}
\end{align*}
for all $t>s>0$ and $\gamma>1$ with $\frac{t-s}{s}>\frac{1}{\gamma}$ by Cauchy-Schwarz inequality and \eqref{smoi}. Combining these cases we get the desired result.
\end{proof}

Then we introduce a lemma which has appeared in \cite{wx2010} and will be used in the proof of Theorem \ref{nes}.
\begin{lemma}[Lemma 2.2 of \cite{wx2010}]\label{lmaxiao}
	Let $\beta$, $\gamma$ and $p$ be
	positive constants, then 
	\begin{equation}
		\int_0^1\frac{r^{p-1}}{\big(A+r^{\ga}\big)^\beta}\, dr
		\asymp\left\{\begin{array}{ll}
			A^{\frac p \ga -\beta} \qquad\qquad \qquad &\hbox{ if } \,\,\beta\gamma> p, \\
			\log\big(1+A^{-1/\ga}\big)  &\hbox{ if }\,\,\beta\gamma=p, \\
			1   &\hbox{ if } \,\, \beta\gamma<p.
		\end{array}
		\right.
	\end{equation}
	for all $A \in (0, 1)$. In the above,  $f(A) \asymp g (A)$  means that the ratio $f(A)/g(A)$
	is bounded from below and above by positive constants that do not
	depend on $A \in (0, 1)$.
\end{lemma}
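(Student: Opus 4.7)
The plan is to reduce the integral to a standard form by a scaling substitution, and then split into cases according to whether the integrand has an integrable tail at infinity. First I would substitute $r = A^{1/\gamma} u$, under which $dr = A^{1/\gamma}\, du$ and $A + r^\gamma = A(1+u^\gamma)$; this converts the integral into
\begin{equation*}
\int_0^1 \frac{r^{p-1}}{(A+r^\gamma)^\beta}\, dr \;=\; A^{p/\gamma - \beta} \int_0^{A^{-1/\gamma}} \frac{u^{p-1}}{(1+u^\gamma)^\beta}\, du.
\end{equation*}
Since $A \in (0,1)$, the upper limit $A^{-1/\gamma}$ is strictly greater than $1$ and tends to $+\infty$ as $A \downarrow 0$, so the prefactor isolates the scaling and the remaining integral carries all the case-dependent behaviour.

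Next I would split the resulting integral at $u=1$. On $[0,1]$ the integrand is continuous, so the integral equals a fixed positive constant $c_0$ independent of $A$. On $[1, A^{-1/\gamma}]$ I would use the elementary two-sided bound $u^\gamma \le 1+u^\gamma \le 2u^\gamma$ to replace $(1+u^\gamma)^{-\beta}$ by $u^{-\beta\gamma}$ up to constants, reducing the tail to $\int_1^{A^{-1/\gamma}} u^{p-1-\beta\gamma}\, du$. The three cases of the lemma then correspond directly to the three asymptotic regimes of this elementary integral: when $\beta\gamma > p$ the tail is bounded uniformly in $A$, giving the overall rate $A^{p/\gamma - \beta}$; when $\beta\gamma = p$ the tail is $\asymp \log A^{-1/\gamma}$, which together with the trivial prefactor $A^{p/\gamma - \beta} = 1$ yields $\log(1 + A^{-1/\gamma})$; and when $\beta\gamma < p$ the tail is $\asymp A^{-(p-\beta\gamma)/\gamma}$, exactly cancelling the prefactor to give $\asymp 1$.

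The only real delicacy, and the one place I expect to slow down, is verifying that the equivalences hold uniformly for all $A \in (0,1)$ rather than merely in the limit $A \downarrow 0$. This is where the additive $1$ inside $\log(1 + A^{-1/\gamma})$ matters: as $A \uparrow 1$ the upper limit $A^{-1/\gamma}$ decreases to $1$ and the tail integral shrinks to zero, while $\log(1 + A^{-1/\gamma})$ stays bounded between positive constants, so the fixed contribution $c_0$ from the $[0,1]$ piece provides both the required lower bound and the matching upper bound. With that uniformity check carried out, the result follows from the scaling substitution and the trivial case analysis above; this is presumably why the authors simply cite it rather than reproduce the proof.
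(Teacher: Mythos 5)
Your proof is correct, and it is the standard argument: the paper itself gives no proof of this lemma (it simply cites Lemma 2.2 of \cite{wx2010}), and the substitution $r=A^{1/\gamma}u$ followed by splitting at $u=1$ and comparing $(1+u^\gamma)^{-\beta}$ with $u^{-\beta\gamma}$ on the tail is exactly how this estimate is usually obtained. One small remark: the same uniformity-as-$A\uparrow1$ point you raise for the case $\beta\gamma=p$ is also needed for the lower bound in the case $\beta\gamma<p$ (the rescaled tail $1-A^{(p-\beta\gamma)/\gamma}$ vanishes as $A\uparrow1$), but the fixed contribution $c_0$ from the $[0,1]$ piece handles it in precisely the way you describe, so no gap remains.
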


Moreover, we give a lemma in \cite{hx2020}, which will be used in the proof of Theorem \ref{suf}.
\begin{lemma}[Lemma A.2 in \cite{hx2020}]\label{lmahx2020} For any $T>0$ and $a_i\in(0,1)$ with $i=1,2,\cdots,m$, 
		\begin{align*}
			\int_{D^m_{T,h}}\prod^{m}_{j=1} u_j^{-a_i}\, du\leq \frac{\prod^m_{j=1}\Gamma(1-a_j)}{\Gamma(m+1-\sum^{m}_{i=1}a_i)}h^{\sum\limits^{m}_{i=1}(1-a_i)},
		\end{align*}
		where $D^m_{T,h}=\Big\{T<u_1, T<u_1+u_2+\cdots+u_m<T+h:\, u_i>0,\, i=2,\cdots,m\Big\}$.
\end{lemma}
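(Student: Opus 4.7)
The plan is to reduce the stated integral to a classical Dirichlet--Liouville integral over a simplex by a simple translation together with one monotonicity bound on the factor involving $u_1$ (reading the integrand as $\prod_j u_j^{-a_j}$, which is the only interpretation consistent with the right-hand side).

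First, I would perform the translation $v_1:=u_1-T$, leaving $u_2,\ldots,u_m$ unchanged; the Jacobian is $1$. Under this substitution the constraint $u_1>T$ becomes $v_1>0$, and the upper sum constraint $u_1+u_2+\cdots+u_m<T+h$ becomes $v_1+u_2+\cdots+u_m<h$. The lower sum constraint $T<u_1+u_2+\cdots+u_m$ is automatic because $u_1=T+v_1>T$. Hence $D^m_{T,h}$ is mapped bijectively onto the open simplex
\[
\Delta_h := \Big\{(v_1,u_2,\ldots,u_m)\in(0,\infty)^m\,:\,v_1+u_2+\cdots+u_m<h\Big\}.
\]
The only factor affected by the substitution is $u_1^{-a_1}=(T+v_1)^{-a_1}$, for which the elementary inequality $(T+v_1)^{-a_1}\le v_1^{-a_1}$ holds on $v_1>0$ since $T>0$ and $a_1>0$.

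Combining these two observations yields
\[
\int_{D^m_{T,h}}\prod_{j=1}^{m} u_j^{-a_j}\,du \;\le\; \int_{\Delta_h} v_1^{-a_1}\prod_{j=2}^{m} u_j^{-a_j}\,dv_1\,du_2\cdots du_m.
\]
The right-hand side is precisely the Dirichlet--Liouville integral with parameters $c_j:=1-a_j\in(0,1)$, and the classical evaluation
\[
\int_{\sum_j x_j<h,\;x_j>0}\prod_{j=1}^m x_j^{c_j-1}\,dx \;=\; h^{\sum_j c_j}\,\frac{\prod_{j=1}^m\Gamma(c_j)}{\Gamma\!\left(1+\sum_j c_j\right)},
\]
together with $\sum_j c_j = m-\sum_i a_i$, delivers the claimed upper bound verbatim.

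The argument is a one-step reduction rather than a hard estimate; the main things worth checking are that the translation really sends $D^m_{T,h}$ onto the whole simplex (so that the lower sum constraint is redundant) and that the Dirichlet integral converges absolutely, both of which are ensured by the hypothesis $a_j\in(0,1)$, giving $c_j>0$ and finite Gamma factors. If one wanted to avoid quoting the Dirichlet formula, it can be derived in a couple of lines by induction on $m$ using the Beta integral, but invoking the standard formula directly is cleaner here.
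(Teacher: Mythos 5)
Your proof is correct: the translation $v_1=u_1-T$ maps $D^m_{T,h}$ onto the simplex $\{v_1+u_2+\cdots+u_m<h\}$, the bound $(T+v_1)^{-a_1}\le v_1^{-a_1}$ is valid since $T>0$ and $a_1>0$, and the Dirichlet--Liouville evaluation with exponents $c_j=1-a_j\in(0,1)$ gives exactly the stated constant $\prod_j\Gamma(1-a_j)/\Gamma\bigl(m+1-\sum_i a_i\bigr)$ and power $h^{\sum_i(1-a_i)}$. The paper itself does not reprove this lemma but defers to Lemma A.2 of \cite{hx2020}, whose argument is of the same standard type (a monotonicity bound followed by iterated Beta integrals, which is just the inductive derivation of the Dirichlet formula you mention), so your write-up is an acceptable self-contained substitute.
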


Finally, we give a lemma in \cite{hx2021}, which will be used in the proof of statement {\rm (2)} and statement {\rm (3)} in Theorem \ref{nes}.
\begin{lemma}[Lemma 2.3 of \cite{hx2021}]\label{lmahx}
	Assume that $k\in\N\cup\{0\}$ and $\varepsilon>0$. Then, for any $a,b,c\in\R$ with $a,c>0$ and $(a+\varepsilon)(c+\varepsilon)-b^2>0$, we have
	\begin{align*}
		&\frac{(-1)^k}{2\pi}\int_{\R^2} \exp\Big\{-\frac{1}{2}(y_2^2a+2y_2 y_1b+y_1^2c)-\frac{\varepsilon}{2}(y_2^2+y_1^2)\Big\}  y^{k}_2 y_1^{k} \, dy\\
		&\qquad\qquad\qquad\qquad\qquad\qquad=\sum^k_{\ell=0, \text{even}}    \frac{c_{k,\ell} \,b^{k-\ell}}{((a+\varepsilon)(c+\varepsilon)-b^2)^{\frac{2k-\ell+1}{2}}},
	\end{align*}
	where $c_{k,\ell}=(\ell-1)!!  \binom{k}{\ell} (2k-\ell-1)!!$.
\end{lemma}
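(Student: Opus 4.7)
The plan is to reduce the integral to the mixed moment $\E[Y_1^k Y_2^k]$ of a centred bivariate Gaussian and then expand. Setting $A := a + \varepsilon$ and $C := c + \varepsilon$, the quadratic form in the exponent is exactly the precision form of the centred Gaussian vector $(Y_1, Y_2)$ whose covariance matrix is $\frac{1}{AC-b^2}\begin{pmatrix} A & -b \\ -b & C \end{pmatrix}$. Carrying out the Gaussian normalisation shows that the left-hand side of the lemma equals $\frac{(-1)^k}{\sqrt{AC-b^2}}\,\E[Y_1^k Y_2^k]$, so the task reduces to computing this mixed moment and rewriting the answer purely in terms of $b$ and $D := AC - b^2$.

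First I would apply Wick's formula (Isserlis' theorem) to expand $\E[Y_1^k Y_2^k]$ as a sum over perfect matchings of the multiset with $k$ copies of $Y_1$ and $k$ copies of $Y_2$, grouping matchings by the (necessarily even) number $\ell \in \{0, 2, \ldots, k\}$ of $Y_1$'s paired among themselves, which by a balance argument equals the number of $Y_2$'s paired among themselves. A direct enumeration gives $\binom{k}{\ell}^2 [(\ell-1)!!]^2 (k-\ell)!$ matchings in each class. Inserting the variances $A/D$, $C/D$ and covariance $-b/D$ and combining $(-1)^k$ with $(-b)^{k-\ell}$ (which gives $b^{k-\ell}$, since $\ell$ is even) produces
\begin{equation*}
	\frac{(-1)^k}{\sqrt{D}}\,\E[Y_1^k Y_2^k] = \sum_{\ell=0,\, \ell \text{ even}}^{k} \binom{k}{\ell}^{2}[(\ell-1)!!]^{2}(k-\ell)!\; \frac{(AC)^{\ell/2}\, b^{k-\ell}}{D^{\,k+1/2}}.
\end{equation*}

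The second step is to eliminate the residual $(AC)^{\ell/2}$ factors in favour of $D$ alone. Since $\ell$ is even, $(AC)^{\ell/2}$ is a polynomial in $AC$, and the identity $AC = D + b^2$ together with the binomial theorem yields $(AC)^{\ell/2} = \sum_{j=0}^{\ell/2} \binom{\ell/2}{j} D^{\ell/2 - j} b^{2j}$. Re-indexing the resulting double sum by $\ell' := \ell - 2j$ transforms the exponent of $b$ into $k - \ell'$ and the exponent of $D$ into $(2k-\ell'+1)/2$, exactly matching the form stated in the lemma. Collecting the coefficients then reduces everything to a purely combinatorial identity expressing $c_{k, \ell'} = (\ell'-1)!!\binom{k}{\ell'}(2k-\ell'-1)!!$ as a weighted sum of the matching counts, verifiable by induction on $k$ or by a generating-function argument.

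The step I expect to require the most care is this final combinatorial identity, since tracking the double factorials through the re-indexing is bookkeeping-heavy. A cleaner alternative is to bypass Wick's formula entirely by differentiating the moment generating function: one has $\E[Y_1^k Y_2^k] = \partial_u^k \partial_v^k \exp\{\frac{1}{2D}(A u^2 - 2buv + C v^2)\}\big|_{u=v=0}$, and because each factor in the expanded exponent has even total degree in $(u,v)$, only the $n=k$ term of the exponential's Taylor series can produce the monomial $u^k v^k$. Extracting the $u^k v^k$ coefficient of $(Au^2 - 2buv + Cv^2)^k$ by the multinomial theorem (the degree constraints force the multiplicities on $Au^2$ and $Cv^2$ to coincide) and then performing the same $AC = D + b^2$ expansion yields the closed form through a single binomial step, which I would adopt as the cleaner presentation for the final write-up.
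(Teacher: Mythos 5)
Your reduction is correct as far as it goes: the left-hand side does equal $\frac{(-1)^k}{\sqrt{D}}\E[Y_1^kY_2^k]$ with $D=AC-b^2$ and covariance matrix $\frac1D\bigl(\begin{smallmatrix}A&-b\\-b&C\end{smallmatrix}\bigr)$, the Wick count $\binom{k}{\ell}^2[(\ell-1)!!]^2(k-\ell)!$ is right, and the substitution $AC=D+b^2$ with re-indexing $\ell'=\ell-2j$ does produce terms of the stated shape $b^{k-\ell'}D^{-(2k-\ell'+1)/2}$. (Note the paper itself gives no proof here; it simply cites Lemma 2.3 of \cite{hx2021}, so your argument stands on its own.) The genuine gap is the last step: the lemma's content is precisely that the collected coefficient equals $c_{k,\ell'}=(\ell'-1)!!\binom{k}{\ell'}(2k-\ell'-1)!!$, i.e.\ the identity
\begin{align*}
\sum_{j\ge0}\binom{k}{\ell'+2j}^{2}\bigl[(\ell'+2j-1)!!\bigr]^{2}(k-\ell'-2j)!\,\binom{(\ell'+2j)/2}{j}=(\ell'-1)!!\binom{k}{\ell'}(2k-\ell'-1)!!,
\end{align*}
and you only assert that this is ``verifiable by induction or by a generating-function argument'' without carrying it out. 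Moreover, your proposed cleaner alternative via the moment generating function does not remove this obstacle: the multinomial coefficient $\frac{(k!)^2}{4^i(i!)^2(k-2i)!}$ you extract there is exactly the same Wick count in disguise, so after the binomial expansion of $(AC)^{\ell/2}=(D+b^2)^{\ell/2}$ you face the identical resummation; it is not ``a single binomial step.'' As written, the specific constants $c_{k,\ell}$ are never actually derived.

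The gap is fillable, and there is a route that avoids the resummation altogether: integrate out one variable first (equivalently, condition). Writing $\E[Y_1^kY_2^k]=\E\bigl[Y_1^k\,\E(Y_2^k\mid Y_1)\bigr]$ with $Y_2\mid Y_1\sim N(-\tfrac{b}{A}Y_1,\tfrac1A)$, the one-dimensional moment formula gives $\E(Y_2^k\mid Y_1)=\sum_{\ell \,\mathrm{even}}\binom{k}{\ell}(\ell-1)!!\,A^{-\ell/2}\bigl(-\tfrac{b}{A}Y_1\bigr)^{k-\ell}$, and then $\E[Y_1^{2k-\ell}]=(2k-\ell-1)!!\,(A/D)^{(2k-\ell)/2}$; the powers of $A$ cancel identically and each $\ell$ delivers its coefficient $c_{k,\ell}$ directly, with the sign $(-1)^k(-b)^{k-\ell}=b^{k-\ell}$ since $\ell$ is even. (If you prefer to keep your Wick route, the missing identity itself has a short bijective proof: both sides count matchings of $k+k$ points together with a distinguished set of $\ell'/2$ of the $Y_1$-self-pairs.) Either repair completes the argument; without one of them the proposal does not yet establish the stated coefficients.
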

\section{Proof of Theorem \ref{suf}}
In this section, we focus on the proof of Theorem \ref{suf}. 
Before starting our proof, we consider two propositions for $X\in G^{d,H}_{S}$. 
\begin{proposition}\label{upper1}
	For any $\boldsymbol{\alpha}=(\alpha_1,\cdots,\alpha_d)\in[0,\infty)^d$ with $H(2|\boldsymbol{\alpha}|+d)<1$ and $X\in G^{d,H}_{S}$, there exists a constant $c_{\boldsymbol{\alpha}}>0$ s.t. 
	\begin{align*}
		\prod_{\ell=1}^{d}\bigg(\int_{\R^{m}}\prod_{j=1}^{m}|y_{j\ell}|^{\alpha_\ell}&\times\exp\bigg(-\frac12\Var\Big(\sum_{j=1}^{m}y_{j\ell}X^{\ell}_{t_j}\Big)\bigg)dy_{\cdot\ell}\bigg)\\&\le c_{\boldsymbol{\alpha}} \sum_{\sS}\prod_{\ell=1}^{d}\prod_{j=1}^{m}(t_j-t_{j-1})^{-H(p_{j\ell}\alpha_\ell+\bar{p}_{j\ell}\alpha_\ell+1)}
	\end{align*}
	for any $0<t_1<t_2<\cdots<t_n<\infty$, where $dy_{\cdot\ell}=dy_{1\ell}\cdots dy_{m\ell}$ for all $\ell=1,2,\cdots,d$ and 
	$$\sS=\{p_{j\ell},\bar{p}_{j\ell}\in\{0,1\}:\bar{p}_{1\ell}=0,p_{m\ell}=1,p_{j\ell}+\bar{p}_{j\ell}=1,j=1,2,\cdots,m,\ell=1,2,\cdots,d\}.$$
\end{proposition}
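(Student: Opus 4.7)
The integrand on the left factorizes across $\ell$ because the components $X^{1},\dots,X^{d}$ are independent copies of the same one-dimensional Gaussian process, and the $\ell$-th integral involves only the component $X^{\ell}$. Hence it suffices to bound, for each fixed $\ell$,
\[
I_{\ell}:=\int_{\R^{m}}\prod_{j=1}^{m}|y_{j\ell}|^{\alpha_{\ell}}\exp\Bigl(-\tfrac{1}{2}\Var\Bigl(\sum_{j=1}^{m}y_{j\ell}X_{t_{j}}^{\ell}\Bigr)\Bigr)\,dy_{\cdot\ell}
\]
by the $\ell$-factor of the claimed right-hand side. The plan has three steps: diagonalize the Gaussian quadratic form via a triangular change of variables, expand the resulting difference weights as a sum of monomials, and evaluate the decoupled one-dimensional Gaussian integrals.

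For the diagonalization I would use the substitution $u_{j\ell}=\sum_{k=j}^{m}y_{k\ell}$, whose Jacobian is one, giving $y_{j\ell}=u_{j\ell}-u_{j+1,\ell}$ with the convention $u_{m+1,\ell}=0$. Since $X_{0}^{\ell}=0$ (which is implicit in \eqref{slndp} upon taking $s_{0}=0$ and letting $t\downarrow 0$), the linear form telescopes to $\sum_{j}y_{j\ell}X_{t_{j}}^{\ell}=\sum_{j}u_{j\ell}(X_{t_{j}}^{\ell}-X_{t_{j-1}}^{\ell})$. The key analytic input is the standard quadratic consequence of SLND: for any real $u_{1},\dots,u_{m}$,
\[
\Var\Bigl(\sum_{j=1}^{m}u_{j}\bigl(X_{t_{j}}^{\ell}-X_{t_{j-1}}^{\ell}\bigr)\Bigr)\;\geq\;c_{H}\sum_{j=1}^{m}u_{j}^{2}(t_{j}-t_{j-1})^{2H},
\]
which replaces the exponent by the uncoupled quadratic $\tfrac{c_{H}}{2}\sum_{j}u_{j\ell}^{2}(t_{j}-t_{j-1})^{2H}$ while preserving the upper bound.

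Next I would apply the elementary inequality $|a-b|^{\alpha_{\ell}}\leq 2^{\alpha_{\ell}}(|a|^{\alpha_{\ell}}+|b|^{\alpha_{\ell}})$, valid for every $\alpha_{\ell}\geq 0$, to each factor of $\prod_{j}|u_{j\ell}-u_{j+1,\ell}|^{\alpha_{\ell}}$ and distribute, writing the result as a finite sum over choices $(p,\bar{p})\in\sS$ of monomials $\prod_{k=1}^{m}|u_{k\ell}|^{\alpha_{\ell}(p_{k\ell}+\bar{p}_{k\ell})}$. Here $p_{k\ell}\in\{0,1\}$ records whether the $j=k$ binomial contributes the $|u_{k\ell}|^{\alpha_\ell}$ term, and $\bar{p}_{k\ell}\in\{0,1\}$ records whether the neighbouring $j=k-1$ binomial contributes $|u_{k\ell}|^{\alpha_\ell}$; the boundary conditions $\bar{p}_{1\ell}=0$ and $p_{m\ell}=1$ encode, respectively, that there is no predecessor binomial at $k=1$ and that $u_{m+1,\ell}=0$ forces the $j=m$ binomial to contribute $|u_{m\ell}|^{\alpha_\ell}$. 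After this expansion the integrand factorizes across $k$, and the one-dimensional formula
\[
\int_{\R}|u|^{r}e^{-c u^{2}\tau^{2}}\,du=C_{r}\,\tau^{-(r+1)}
\]
applied with $\tau=(t_{k}-t_{k-1})^{H}$ and $r=\alpha_{\ell}(p_{k\ell}+\bar{p}_{k\ell})$ yields the desired factor $(t_{k}-t_{k-1})^{-H(p_{k\ell}\alpha_\ell+\bar{p}_{k\ell}\alpha_\ell+1)}$, whose product over $k$ and $\ell$ assembles to the claimed bound.

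The main obstacle is the SLND-based diagonal lower bound used in the second step: passing from the pointwise conditional-variance statement \eqref{slndp} to the quadratic-form inequality on arbitrary linear combinations of increments is not automatic and relies on orthogonalizing the Gaussian vector $(X_{t_{1}}^{\ell},\dots,X_{t_{m}}^{\ell})$. Concretely, one iterates the factorization \eqref{decomCov} together with the SLND lower bound on each successive conditional variance, which produces the required sum of $u_{j}^{2}(t_{j}-t_{j-1})^{2H}$. Once that diagonal estimate is in place, the remainder of the argument (the elementary expansion via $|a-b|^{\alpha_\ell}$ and the one-dimensional Gaussian integrations) is routine and reproduces the stated sum over $\sS$.
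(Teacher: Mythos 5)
Your reduction to a single $\ell$, the telescoping substitution, the binomial expansion of $|u_{j}-u_{j+1}|^{\alpha_\ell}$ and the final one-dimensional Gaussian integrations are all fine, but the whole argument hinges on the ``key analytic input''
\[
\Var\Bigl(\sum_{j=1}^{m}u_{j}\bigl(X^{\ell}_{t_{j}}-X^{\ell}_{t_{j-1}}\bigr)\Bigr)\;\geq\;c_{H}\sum_{j=1}^{m}u_{j}^{2}(t_{j}-t_{j-1})^{2H},
\]
and this step is a genuine gap. First, with a constant $c_{H}$ independent of $m$ the inequality is simply false, even for fractional Brownian motion with $H<\tfrac12$ (which lies in $G^{1,H}_{S}$): take $u_{j}\equiv1$ and $t_{j}=jt/m$; the left side telescopes to $\Var(X_{t})=t^{2H}$ while the right side is $c_{H}\,m^{1-2H}t^{2H}\to\infty$. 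Second, even allowing the constant to depend on $m$ (which would suffice here, since $m$ is the fixed moment order), the inequality does not follow from \eqref{slndp}: the SLND hypothesis only bounds conditional variances from below, hence it only controls, after projecting onto the orthogonal complement of the other variables, the coefficients $v_{k}=u_{k}-u_{k+1}$ of the $X_{t_{k}}$'s, one at a time; it does not prevent strong negative correlation between $X_{s}$ and $X_{t}$ at well-separated times, which is exactly what can make $\Var(\sum_{j}u_{j}\Delta_{j}X)$ collapse while $\sum_{j}u_{j}^{2}(t_{j}-t_{j-1})^{2H}$ stays large. Your proposed justification --- ``iterate the factorization \eqref{decomCov} together with SLND on each successive conditional variance'' --- does not produce a diagonal lower bound on the quadratic form: the identity \eqref{decomCov} concerns the determinant, and the martingale/orthogonalization decomposition of $\Var(\sum_{k}v_{k}X_{t_{k}})$ mixes the coefficients $v_{j}$, $j\geq k$, into the $k$-th term, so no inequality of the form $\Var(\sum_{k}v_{k}X_{t_{k}})\geq\sum_{k}v_{k}^{2}\Var(X_{t_{k}}\mid X_{t_{1}},\dots,X_{t_{k-1}})$ holds in general.

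The paper avoids any such increment-diagonal bound. It first applies H\"older's inequality in the $y$-variables to replace $\prod_{j}|y_{j\ell}|^{\alpha_\ell}$ by a geometric mean of integrals each containing a single weight $|y_{j\ell}|^{m\alpha_\ell}$, then evaluates each of these exactly by the Cuzick--Du Preez identity (Lemma \ref{Lem:cd1982}), which produces $\det\Cov(X^{1}_{t_1},\dots,X^{1}_{t_m})^{-1/2}$ and a single conditional standard deviation $\sigma_{t_j}$; only then are \eqref{decomCov} and \eqref{slndp} invoked, for which the conditional-variance form of SLND is exactly the right hypothesis, and the $\min\{(t_j-t_{j-1})^{2H},(t_{j+1}-t_j)^{2H}\}$ terms are expanded to give the sum over $\sS$. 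If you want to salvage your route, you would have to add the two-sided increment inequality as an extra hypothesis on the process (it is not part of $G^{d,H}_{S}$ and, as noted, fails for fBm with $H<\tfrac12$ in the uniform form), so as written the proposal does not prove the proposition.
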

\begin{proof}
First, for each $\ell$, using H\"older's inequality and Lemma \ref{Lem:cd1982}, 
\begin{equation}\label{pthm12}
	\begin{split}
		&\int_{\R^{m}}\prod_{j=1}^{m}|y_{j\ell}|^{\alpha_\ell}\times\exp\bigg(-\frac12\Var\Big(\sum_{j=1}^{m}y_{j\ell}X^{\ell}_{t_j}\Big)\bigg)dy_{\cdot\ell}\\\le&\prod_{j=1}^{m}\bigg(\int_{\R^{m}}|y_{j\ell}|^{m\alpha_\ell}\times\exp\bigg(-\frac12\Var\Big(\sum_{j=1}^{m}y_{j\ell}X^{\ell}_{t_j}\Big)\bigg)dy_{\cdot\ell}\bigg)^{\frac1m}\\=&\prod_{j=1}^{m}\bigg(\frac{(2\pi)^{\frac{m-1}{2}}}{\det\Cov(X^1_{t_1},\cdots,X^1_{t_m})^{\frac12}}\int_{\R}\Big(\frac{|y_{j\ell}|}{\sigma_{t_j}}\Big)^{m\alpha_\ell}e^{-\frac{y_{j\ell}^2}{2}}dy_{j\ell}\bigg)^{\frac1m}\\= &(2\pi)^{\frac{m-1}{2m}}\Big(\int_{\R}|v|^{m\alpha_\ell}e^{-\frac{v^2}{2}}dv\Big)\times\Big(\det\Cov(X^1_{t_1},\cdots,X^1_{t_m})\Big)^{-\frac12}\times\prod_{j=1}^{m}\sigma_{t_j}^{-\alpha_\ell},
	\end{split}
\end{equation}
where $\sigma_{t_j}:=\Var(X^1_{t_j}|X^1_{t_i},i=1,\cdots,m,i\ne j)$ for $j=1,2,\cdots,d$. Then using \eqref{decomCov} and {\bf SLND} \eqref{slndp} gives that  
\begin{equation}\label{pthm13}
	\begin{split}
		&\Big(\det\Cov(X^1_{t_1},\cdots,X^1_{t_m})\Big)^{-\frac12}\times\prod_{j=1}^{m}\sigma_{t_j}^{-\alpha_\ell}\\\le&(\kappa_{H})^{-m(\alpha_\ell+\frac12)} \prod_{j=1}^{m}(t_j-t_{j-1})^{-H}\times(t_m-t_{m-1})^{-H\alpha_\ell}\times\prod_{j=1}^{m-1}\Big(\min\big\{t_j-t_{j-1},t_{j+1}-t_j\big\}\Big)^{-H\alpha_\ell}\\\le& (\kappa_{H})^{-m(\alpha_\ell+\frac12)}\prod_{j=1}^{m}(t_j-t_{j-1})^{-H}\times(t_m-t_{m-1})^{-H\alpha_\ell}\times\prod_{j=1}^{m-1}\Big((t_j-t_{j-1})^{-H\alpha_\ell}+(t_{j+1}-t_j)^{-H\alpha_\ell}\Big)\\=&(\kappa_{H})^{-m(\alpha_\ell+\frac12)} \sum_{\sS_\ell}\prod_{j=1}^{m}(t_j-t_{j-1})^{-H(p_{j\ell}\alpha_\ell+\bar{p}_{j\ell}\alpha_\ell+1)},
	\end{split}
\end{equation}
where $0=t_0<t_1<t_2<\cdots<t_m$ and  $$\sS_\ell=\{p_{j\ell},\bar{p}_{j\ell}\in\{0,1\}:\bar{p}_{1\ell}=0,p_{m\ell}=1,p_{j\ell}+\bar{p}_{j\ell}=1,j=1,2,\cdots,m\},\ell=1,2,\cdots,d.$$	
So combining \eqref{pthm12} with \eqref{pthm13} we get the desired result.
\end{proof}

Then we discuss the sufficient condition for the existence of $L^{(\boldsymbol{\alpha})}_{\pm}(T,x)$ in $L^p(p\ge1)$. 
From \eqref{fdalpha} and \eqref{approx}, we have 
\begin{align*}
	L^{(\boldsymbol{\alpha})}_{\pm,\varepsilon}(T,x)-L^{(\boldsymbol{\alpha})}_{\pm,\eta}(T,x)=\frac{1}{(2\pi)^d}\int_{0}^{T}\int_{\R^d}\prod_{\ell=1}^{d}(\pm\iota y_{\ell})_0^{\alpha_\ell}e^{\iota y\cdot (x+X_t)}\Big(e^{-\frac{\varepsilon}{2}|y|^2}-e^{-\frac{\eta}{2}|y|^2}\Big)dydt
\end{align*}
and 
\begin{align}
		&\E\big[L^{(\boldsymbol{\alpha})}_{\pm,\varepsilon}(T,x)-L^{(\boldsymbol{\alpha})}_{\pm,\eta}(T,x)\big]^m\nonumber\\\le&\frac{1}{(2\pi)^{md}}\int_{[0,T]^m}\int_{\R^{md}}\prod_{j=1}^{m}\prod_{\ell=1}^{d}|y_{j\ell}|^{\alpha_\ell}\times\exp\bigg(-\frac12\sum_{\ell=1}^{d}\Var\Big(\sum_{j=1}^{m}y_{j\ell}X^{\ell}_{t_j}\Big)\bigg)dydt\nonumber\\=&\frac{m!}{(2\pi)^{md}}\int_{[0,T]^m_<}\prod_{\ell=1}^{d}\bigg(\int_{\R^{m}}\prod_{j=1}^{m}|y_{j\ell}|^{\alpha_\ell}\times\exp\bigg(-\frac12\Var\Big(\sum_{j=1}^{m}y_{j\ell}X^{\ell}_{t_j}\Big)\bigg)dy_{\cdot\ell}\bigg)dt\label{pthm11}
\end{align} 
for any even $m$ and $\varepsilon,\eta>0$, where $(\iota x)_0^{\alpha}=|x|^{\alpha}e^{\iota\frac{\pi \alpha}{2}\sgn(x)}$ for any $\alpha\ge0$ is defined in \eqref{alphapower},  and $$[0,T]^m_<=\{(t_1,\cdots,t_m):0=t_0<t_1<t_2<\cdots<t_m<T\}.$$ Using Proposition \ref{upper1} gives that 
 \begin{equation}\label{pthm1b}
 	\begin{split}
 		\int_{[0,T]^m_<}\prod_{\ell=1}^{d}\bigg(\int_{\R^{m}}\prod_{j=1}^{m}|y_{j\ell}|^{\alpha_\ell}\times&\exp\bigg(-\frac12\Var\Big(\sum_{j=1}^{m}y_{j\ell}X^{\ell}_{t_j}\Big)\bigg)dy_{\cdot\ell}\bigg)dt\\\le&c\sum_{\sS} \int_{[0,T]^m_<}\prod_{\ell=1}^{d}\prod_{j=1}^{m}(t_j-t_{j-1})^{-H(p_{j\ell}\alpha_\ell+\bar{p}_{j\ell}\alpha_\ell+1)}dt\\\le&c\sum_{\sS} \int_{[0,T]^m}\prod_{j=1}^{m}u_j^{-H\sum_{\ell=1}^{d}(p_{j\ell}\alpha_\ell+\bar{p}_{j\ell}\alpha_\ell+1)}du<\infty.
 	\end{split}
 \end{equation}
 where in the second inequality we use the coordinate transform $u_j=t_j-t_{j-1},j=1,\cdots,m$ and the last inequality is from the fact 
 $$\displaystyle\sum_{\ell=1}^{d}H(p_{j\ell}\alpha_\ell+\bar{p}_{j\ell}\alpha_\ell+1)\le H(2|\boldsymbol{\alpha}|+d)<1,$$
 for any $p_{j\ell}$s and $\bar{p}_{j\ell}$s in $\sS$. It leads to that when applying dominated convergence theorem to \eqref{pthm11}, we can get 
  \[\lim\limits_{\varepsilon,\eta\downarrow0}\E\big[L^{(\boldsymbol{\alpha})}_{\pm,\varepsilon}(T,x)-L^{(\boldsymbol{\alpha})}_{\pm,\eta}(T,x)\big]^m=0,\]
which means that $L^{(\boldsymbol{\alpha})}_{\pm,\varepsilon}(T,x)$ converges in $L^m$ for any even $m\ge2$ so  $L^{(\boldsymbol{\alpha})}_{\pm}(T,x)$ exists in $L^p(p\ge1)$.

Then we turn to the proof of H\"older continuity. For the time variable and even $m$, from \eqref{fdalpha} and \eqref{approx}, 
\begin{align*}
	L^{(\boldsymbol{\alpha})}_{\pm,\varepsilon}(T+h,x)-L^{(\boldsymbol{\alpha})}_{\pm,\varepsilon}(T,x)&=\frac{1}{(2\pi)^d}\int_{T}^{T+h}\int_{\R^d}\prod_{\ell=1}^{d}(\pm\iota y_{\ell})_0^{\alpha_\ell}e^{\iota y\cdot (x+X_t)}e^{-\frac{\varepsilon}{2}|y|^2}dydt.
\end{align*} 
Using the similar arguments in \eqref{pthm11}, 
\begin{equation}
	\begin{split}
		&\E\big[L^{(\boldsymbol{\alpha})}_{\pm,\varepsilon}(T+h,x)-L^{(\boldsymbol{\alpha})}_{\pm,\varepsilon}(T,x)\big]^m\\\le&\frac{m!}{(2\pi)^{md}}\int_{[T,T+h]^m_<}\prod_{\ell=1}^{d}\bigg(\int_{\R^{m}}\prod_{j=1}^{m}|y_{j\ell}|^{\alpha_\ell}\times\exp\bigg(-\frac12\Var\Big(\sum_{j=1}^{m}y_{j\ell}X^{\ell}_{t_j}\Big)\bigg)dy_{\cdot\ell}\bigg)dt,
	\end{split}
\end{equation}
where $[T,T+h]^m_<=\{(t_1,\cdots,t_m):T<t_1<t_2<\cdots<t_m<T+h\}$. Similarly, using Proposition \ref{upper1}, 
\begin{equation}
	\begin{split}
		&\E\big[L^{(\boldsymbol{\alpha})}_{\pm,\varepsilon}(T+h,x)-L^{(\boldsymbol{\alpha})}_{\pm,\varepsilon}(T,x)\big]^m\le c\sum_{\sS} \int_{[T,T+h]^m_<}\prod_{\ell=1}^{d}\prod_{j=1}^{m}(t_j-t_{j-1})^{-H(p_{j\ell}\alpha_\ell+\bar{p}_{j\ell}\alpha_\ell+1)}dt.
	\end{split}
\end{equation}
 Here coordinate transform $u_1=t_1,u_j=t_j-t_{j-1},j=2,\cdots,m$ and Lemma \ref{lmahx2020} gives that 
\[\E\big[L^{(\boldsymbol{\alpha})}_{\pm,\varepsilon}(T+h,x)-L^{(\boldsymbol{\alpha})}_{\pm,\varepsilon}(T,x)\big]^m\le c \sum_{\sS}h^{m-H\sum_{j=1}^{m}\sum_{\ell=1}^{d}(p_{j\ell}\alpha_\ell+\bar{p}_{j\ell}\alpha_\ell+1)}=c'h^{m(1-H(|\boldsymbol{\alpha}|+d))}.\]
Let $\varepsilon\downarrow0$ we get 
\[\E\big[L^{(\boldsymbol{\alpha})}_{\pm}(T+h,x)-L^{(\boldsymbol{\alpha})}_{\pm}(T,x)\big]^m\le c'h^{m(1-H(|\boldsymbol{\alpha}|+d))}.\]
Then using Kolmogorov continuity criterion we get locally $\theta_1-$H\"older continuity w.r.t. time variable $T$ for all $\theta_1\in\big(0,1-H(|\boldsymbol{\alpha}|+d)\big)$.  

For the space variable, consider $z\in\R$, from \eqref{fdalpha} and \eqref{approx}, 
\begin{align*}&L^{(\boldsymbol{\alpha})}_{\pm,\varepsilon}(T,z+x)-L^{(\boldsymbol{\alpha})}_{\pm,\varepsilon}(T,x)\\=&\frac{1}{(2\pi)^d}\int_{0}^{T}\int_{\R^d}\prod_{\ell=1}^{d}(\pm\iota y_{\ell})_0^{\alpha_\ell}e^{\iota y\cdot (x+X_t)}(e^{\iota y\cdot z}-1)e^{-\frac{\varepsilon}{2}|y|^2}dydt\\=&\sum_{k=1}^{d}\frac{1}{(2\pi)^d}\int_{0}^{T}\int_{\R^d}\prod_{\ell=1}^{d}(\pm\iota y_{\ell})_0^{\alpha_\ell}e^{\iota y\cdot (x+X_t)}e^{\iota\sum_{\ell=1}^{k-1}y_{\ell}z_{\ell}}(e^{\iota y_{k} z_k}-1)e^{-\frac{\varepsilon}{2}|y|^2}dydt=:\sum_{k=1}^{d}A_k,
\end{align*}
where $\E\big[L^{(\boldsymbol{\alpha})}_{\pm,\varepsilon}(T,z+x)-L^{(\boldsymbol{\alpha})}_{\pm,\varepsilon}(T,x)\big]^m\le d^m\sum\limits_{k=1}^{m}\E[A_k]^m$ for even $m$. Remember that for any $\alpha\in[0,1]$,$$|e^{-\iota y_k z_k}-1|\leq 2^{1-\alpha} |z_k|^{\alpha} |y_k|^{\alpha}\le 2^{1-\alpha}|y_k|^{\alpha} |z|^{\alpha},$$ like the arguments in \eqref{pthm11}, for even $m$,  
\begin{align*}
	&\E[A_k]^m\\\le&\frac{1}{(2\pi)^{md}}\int_{[0,T]^m}\int_{\R^{md}}\prod_{j=1}^{m}\prod_{\ell=1}^{d}|y_{j\ell}|^{\alpha_\ell}\\&\qquad\qquad\qquad\qquad\times\prod_{j=1}^{m}\big|e^{\iota y_{k} z_k}-1\big|\times\exp\bigg(-\frac12\sum_{\ell=1}^{d}\Var\Big(\sum_{j=1}^{m}y_{j\ell}X^{\ell}_{t_j}\Big)\bigg)dydt\\\le&\frac{m!2^{m(1-\alpha)}|z|^{m\alpha}}{(2\pi)^{md}}\int_{[0,T]^m_<}\int_{\R^{md}}\prod_{j=1}^{m}\prod_{\ell=1}^{d}|y_{j\ell}|^{\alpha_\ell+\alpha\eta_{k\ell}}\exp\bigg(-\frac12\sum_{\ell=1}^{d}\Var\Big(\sum_{j=1}^{m}y_{j\ell}X^{\ell}_{t_j}\Big)\bigg)dydt\\=&\frac{m!2^{m(1-\alpha)}|z|^{m\alpha}}{(2\pi)^{md}}\int_{[0,T]^m_<}\prod_{\ell=1}^{d}\bigg(\int_{\R^{m}}\prod_{j=1}^{m}|y_{j\ell}|^{\alpha_\ell+\alpha\eta_{k\ell}}\exp\bigg(-\frac12\Var\Big(\sum_{j=1}^{m}y_{j\ell}X^{\ell}_{t_j}\Big)\bigg)dy_{\cdot\ell}\bigg)dt.
\end{align*} 
  for each $k$ and $\alpha\in(0,1\wedge\frac12(\frac{1}{H}-2|\boldsymbol{\alpha}|-d))$, where $\eta_{ij}=\left\{\begin{array}{cl}
  	1,&i=j;\\
  	0,&i\ne j;
  \end{array}\right.$ in this article. Using Proposition \ref{upper1}, for even $m$,
\begin{align*}
	\E\big[L^{(\boldsymbol{\alpha})}_{\pm,\varepsilon}(T,z+x)&-L^{(\boldsymbol{\alpha})}_{\pm,\varepsilon}(T,x)\big]^m\\&\le c|z|^{m\alpha}\sum\limits_{k=1}^{m}\sum_{\sS} \int_{[0,T]^m_<}\prod_{\ell=1}^{d}\prod_{j=1}^{m}(t_j-t_{j-1})^{-H(p_{j\ell}(\alpha_\ell+\alpha\eta_{k\ell})+\bar{p}_{j\ell}(\alpha_\ell+\alpha\eta_{k\ell})+1)}dt\\&\le c|z|^{m\alpha},
\end{align*}
where the last inequality comes from the fact that 
 $$\displaystyle\sum_{\ell=1}^{d}H\big(p_{j\ell}(\alpha_\ell+\alpha\eta_{k\ell})+\bar{p}_{j\ell}(\alpha_\ell+\alpha\eta_{k\ell})+1\big)\le H(2|\boldsymbol{\alpha}|+2\alpha+d)<1$$ for any $p_{j\ell}$s and $\bar{p}_{j\ell}$s in $\sS$. So for these $m$, letting $\varepsilon\downarrow0$ we have $$\E\big[L^{(\boldsymbol{\alpha})}_{\pm,\varepsilon}(T,z+x)-L^{(\boldsymbol{\alpha})}_{\pm,\varepsilon}(T,x)\big]^m\le d^m\sum\limits_{k=1}^{m}\E[A_k]^m\le c|z|^{m\alpha},$$
 which means that by Kolmogorov continuity criterion, $L^{(\boldsymbol{\alpha})}_{\pm}(T,x)$ is locally $\theta_2-$H\"older continuous w.r.t. space variable $x$ for all $\theta_2\in(0,1\wedge\frac12(\frac{1}{H}-2|\boldsymbol{\alpha}|-d))$.
 
 In the last part of this section, we consider the convergence of $L^{(\boldsymbol{\beta})}_{\pm}(T,x)$ as $\beta_\ell\to\alpha_\ell$ for all $\ell=1,2,\cdots,d$. From \eqref{fdalpha} and \eqref{approx}, 
 \begin{align*}
 	&\quad L^{(\boldsymbol{\beta})}_{\pm,\varepsilon}(T,x)-L^{(\boldsymbol{\alpha})}_{\pm,\varepsilon}(T,x)\\&=\frac{1}{(2\pi)^d}\int_{0}^{T}\int_{\R^d}\Big(\prod_{\ell=1}^{d}(\pm\iota y_{\ell})_0^{\beta_\ell}-\prod_{\ell=1}^{d}(\pm\iota y_{\ell})_0^{\alpha_\ell}\Big)e^{\iota y\cdot (x+X_t)}e^{-\frac{\varepsilon}{2}|y|^2}dydt\\&=\sum_{k=1}^{d}\frac{1}{(2\pi)^d}\int_{0}^{T}\int_{\R^d}\Big(\prod_{\ell=k+1}^{d}(\pm\iota y_{\ell})_0^{\beta_\ell}\Big)\Big((\pm\iota y_{k})_0^{\beta_k}-(\pm\iota y_{k})_0^{\alpha_k}\Big)\Big(\prod_{\ell=1}^{k-1}(\pm\iota y_{\ell})_0^{\alpha_\ell}\Big)e^{\iota y\cdot (x+X_t)}e^{-\frac{\varepsilon}{2}|y|^2}dydt\\&=:\sum_{k=1}^{m}\widehat{A}_k,
 \end{align*} 
where for even $m$, $\E\big[L^{(\boldsymbol{\beta})}_{\pm,\varepsilon}(T,x)-L^{(\boldsymbol{\alpha})}_{\pm,\varepsilon}(T,x)\big]^m\le d^m\sum_{k=1}^{d}\E[\widehat{A}_k]^m$. Like \eqref{pthm11}, for these $m$,
\begin{align}
		&\E\big[	L^{(\boldsymbol{\beta})}_{\pm,\varepsilon}(T,x)-L^{(\boldsymbol{\alpha})}_{\pm,\varepsilon}(T,x)\big]^m\nonumber\\
	\le&\frac{m!}{(2\pi)^{md}}\sum_{k=1}^{d}\int_{[0,T]^m_<}\int_{\R^{md}}\Big(\prod_{\ell=k+1}^{d}\prod_{j=1}^{m}|y_{j\ell}|^{\beta_\ell}\Big)\prod_{j=1}^{m}\Big|(\pm\iota y_{jk})_0^{\beta_k}-(\pm\iota y_{jk})_0^{\alpha_k}\Big|\nonumber\\&\qquad\qquad\qquad\qquad\qquad\times\Big(\prod_{\ell=1}^{k-1}\prod_{j=1}^{m}|y_{j\ell}|^{\alpha_\ell}\Big)\times\exp\bigg(-\frac12\sum_{\ell=1}^{d}\Var\Big(\sum_{j=1}^{m}y_{j\ell}X^{\ell}_{t_j}\Big)\bigg)dydt\nonumber\\\le&c\int_{[0,T]^m_<}\int_{\R^{md}}\Big(\prod_{\ell=1}^{d}\prod_{j=1}^{m}(|y_{j\ell}|+1)^{\beta_\ell\vee\alpha_\ell}\Big)\times\exp\bigg(-\frac12\sum_{\ell=1}^{d}\Var\Big(\sum_{j=1}^{m}y_{j\ell}X^{\ell}_{t_j}\Big)\bigg)dydt\nonumber\\\le&c\int_{[0,T]^m_<}\prod_{\ell=1}^{d}\bigg(\int_{\R^{m}}\prod_{j=1}^{m}(|y_{j\ell}|+1)^{\alpha_\ell+\theta}\exp\bigg(-\frac12\Var\Big(\sum_{j=1}^{m}y_{j\ell}X^{\ell}_{t_j}\Big)\bigg)dy_{\cdot\ell}\bigg)dt,\label{upper2}
\end{align}
where $a\vee b:=\max\{a,b\}$ for all $a,b\in\R$, $\theta>0$ is a constant satisfying \[H\big(2\sum_{\ell=1}^d(\alpha_\ell+\theta)+d\big)<1\] and the last inequality exists when $\beta_\ell$ is close to $\alpha_\ell$ enough for all $\ell=1,2,\cdots,d$.
So by dominated convergence theorem it is enough to prove that 
\begin{align*}
	\int_{[0,T]^m_<}\prod_{\ell=1}^{d}\bigg(\int_{\R^{m}}\prod_{j=1}^{m}(|y_{j\ell}|+1)^{\alpha_\ell}\exp\bigg(-\frac12\Var\Big(\sum_{j=1}^{m}y_{j\ell}X^{\ell}_{t_j}\Big)\bigg)dy_{\cdot\ell}\bigg)dt<\infty
\end{align*}
for all $\boldsymbol{\alpha}=(\alpha_1,\cdots,\alpha_d)\in[0,\infty)^d$ with $H(2|\boldsymbol{\alpha}|+d)<1$. 

Using H\"older's inequality, Lemma \ref{Lem:cd1982}, \eqref{decomCov} and {\bf SLND} \eqref{slndp} gives that
\begin{align*}
	&\int_{\R^{m}}\prod_{j=1}^{m}(|y_{j\ell}|+1)^{\alpha_\ell}\exp\bigg(-\frac12\Var\Big(\sum_{j=1}^{m}y_{j\ell}X^{\ell}_{t_j}\Big)\bigg)dy_{\cdot\ell}\\\le&\prod_{j=1}^{m}\bigg(	\int_{\R^m}(|y_{j\ell}|+1)^{m\alpha_\ell}\exp\bigg(-\frac12\Var\Big(\sum_{j=1}^{m}y_{jk}X^{k}_{t_j}\Big)\bigg)dy_{\cdot k}\bigg)^{\frac1m}\\\le&c\Big(\det\Cov(X^1_{t_1},\cdots,X^1_{t_m})\Big)^{-\frac12}\times\prod_{j=1}^{m}\bigg(\int_{\R}\Big(\frac{|y_{j\ell}|}{\sigma_{t_j}}+1\Big)^{m\alpha_\ell}e^{-\frac{y_{j\ell}^2}{2}}dy_{j\ell}\bigg)^{\frac1m}\\\le&c\Big(\det\Cov(X^1_{t_1},\cdots,X^1_{t_m})\Big)^{-\frac12}\times\prod_{j=1}^{m}(\sigma_{t_j}^{-\alpha_\ell}+1)\\\le&c\Big(\det\Cov(X^1_{t_1},\cdots,X^1_{t_m})\Big)^{-\frac12}\times\prod_{j=1}^{m}(\widetilde{\sigma}_{t_j}^{-\alpha_\ell}+1)\\\le&c\Big(\det\Cov(X^1_{t_1},\cdots,X^1_{t_m})\Big)^{-\frac12}\times\prod_{j=1}^{m}\widetilde{\sigma}_{t_j}^{-\alpha_\ell}\\\le&c\sum_{\sS_\ell}\prod_{j=1}^{m}(t_j-t_{j-1})^{-H(p_{j\ell}\alpha_\ell+\bar{p}_{j\ell}\alpha_\ell+1)}<\infty,
\end{align*}
for any $0=t_0<t_1<t_2<\cdots<t_m<T$, where $$\widetilde{\sigma}_{t_j}=\left\{\begin{array}{cl}
	\min\{(t_j-t_{j-1})^{2H},(t_{j+1}-t_j)^{2H}\}, & j=1,\cdots,m-1,\\ (t_m-t_{m-1})^{2H}, & j=m.	
\end{array}\right.,$$ the third inequality is because 
\[\int_{\R}\Big(\frac{|y_{j\ell}|}{\sigma_{t_j}}+1\Big)^{m\alpha_\ell}e^{-\frac{y_{j\ell}^2}{2}}dy_{j\ell}\le2^{m\alpha_\ell}\Big(\sigma_{t_j}^{-m\alpha_\ell}\int_{\R}|v|^{m\alpha_\ell}e^{-\frac{v^2}{2}}dv+\int_{\R}e^{-\frac{v^2}{2}}dv\Big)\]
and the fifth inequality comes from the fact $\widetilde{\sigma}_{t_j}\le T^{2H}$ for $j=1,2,\cdots,m.$
Then we get 
\begin{align*}
	\int_{[0,T]^m_<}\prod_{\ell=1}^{d}\bigg(\int_{\R^{m}}\prod_{j=1}^{m}(|y_{j\ell}|&+1)^{\alpha_\ell}\exp\bigg(-\frac12\Var\Big(\sum_{j=1}^{m}y_{j\ell}X^{\ell}_{t_j}\Big)\bigg)dy_{\cdot\ell}\bigg)dt\\&\le c\sum_{\sS} \int_{[0,T]^m_<}\prod_{j=1}^{m}(t_j-t_{j-1})^{-H\sum_{\ell=1}^{d}(p_{j\ell}\alpha_\ell+\bar{p}_{j\ell}\alpha_\ell+1)}dt<\infty,
\end{align*}
where the last inequality is from 
\[\sum_{\ell=1}^{d}H(p_{j\ell}\alpha_\ell+\bar{p}_{j\ell}\alpha_\ell+1)\le H(2\sum_{\ell=1}^{d}|\boldsymbol{\alpha}|+d)<1.\]
 So letting $\varepsilon\downarrow0$ and using dominated convergence theorem in \eqref{upper2} we have 
\[\lim\limits_{\beta_\ell\to\alpha_\ell,\ell=1,\cdots,d}\mathrm{E}\big[L^{(\boldsymbol{\beta})}_{\pm}(T,x)-L^{(\boldsymbol{\alpha})}_{\pm}(T,x)\big]^m=0\]
for even $m$, which completes the proof.
\section{Proof of Theorem \ref{nes}}
In this section, we focus on the proof of Theorem \ref{nes}, where for simplicity we denote
\begin{align*}
	A&=A(t)=\E[X^1_t]^2+\varepsilon,\\B&=B(t,s)=\E[X^1_tX^1_s],\\C&=C(s)=\E[X^1_s]^2+\varepsilon,\\D&=D(t,s)=A-2B+C,\\G&=G(t,s)=\sqrt{\frac{A-2B+C}{AC-B^2}},\\\Delta&=\Delta(t,s)=AC-B^2
\end{align*}
for all $t,s>0$ and $\varepsilon>0$ in this section. 
Now we consider the proof of Theorem \ref{nes}. From \eqref{fdalpha} and \eqref{approx}, 
\begin{align}
	\E\big[L^{(\boldsymbol{\alpha})}_{\pm,\varepsilon}(T,x)\big]^2&=\frac{2}{(2\pi)^{2d}}\int_{[0,T]^2_<}\int_{\R^{2d}}\prod_{\ell=1}^{d}(\pm\iota y_{1\ell})_0^{\alpha_\ell}(\pm\iota y_{2\ell})_0^{\alpha_\ell}\times e^{\iota(y_1+y_2)\cdot x}\nonumber\\&\qquad\times\exp\Big(-\frac{1}{2}\sum_{\ell=1}^{d}\Var\Big(y_{1\ell}X^\ell_{t}+y_{1\ell}X^\ell_{s}\Big)-\frac{\varepsilon}{2}(|y_1|^2+|y_2|^2)\Big)dydtds\nonumber\\&=\frac{2}{(2\pi)^{2d}}\int_{[0,T]^2_<}\prod_{\ell=1}^{d}\bigg(\int_{\R^{2}}(\pm\iota v_1)^{\alpha_\ell}_0(\pm\iota v_2)^{\alpha_\ell}_0\times e^{\iota(v_1+v_2)x_\ell}\nonumber\\&\qquad\qquad\qquad\qquad\qquad\times\exp\Big(-\frac{1}{2}\big(Av_1^2+2Bv_1v_2+Cv_2^2\big)\Big)dv_1dv_2\bigg)dtds\nonumber\\&=:\frac{2}{(2\pi)^{2d}}\int_{[0,T]^2_<}\prod_{\ell=1}^{d}J_{\pm}(\alpha_\ell,x_\ell,t,s)dtds,\label{exosm}
\end{align}
where $[0,T]^2_<=\{(t,s):0<s<t<T\}$. Because $\E\big[L^{(\boldsymbol{\alpha})}_{+,\varepsilon}(T,x)\big]^2=\E\big[L^{(\boldsymbol{\alpha})}_{-,\varepsilon}(T,-x)\big]^2$, it is enough to prove that $$\liminf_{\varepsilon\downarrow0}\E\big[L^{(\boldsymbol{\alpha})}_{+,\varepsilon}(T,x)\big]^2=+\infty$$
in the proof. Using coordinate transform 
\begin{align*}
	v_1&=-\frac{u_1}{\sqrt{A-2B+C}}+\frac{(A-B)u_2}{\sqrt{(AC-B^2)(A-2B+C)}},\\
	v_2&=+\frac{u_1}{\sqrt{A-2B+C}}+\frac{(C-B)u_2}{\sqrt{(AC-B^2)(A-2B+C)}},
\end{align*}
we get 
\begin{align*}
	J_+(\alpha_\ell,x_\ell,t,s)=\frac{G}{D^{\frac{2\alpha_\ell+1}{2}}}\int_{\R^2}\big(-&\iota u_1-\iota\frac{C-B}{\sqrt{\Delta}} u_2+\iota\sqrt{D}Gu_2\big)^{\alpha_\ell}_0\\\times\big(&\iota u_1+\iota \frac{C-B}{\sqrt{\Delta}} u_2\big)^{\alpha_\ell}_0e^{-\frac12u_1^2-\frac12u_2^2+\iota Gu_2x_\ell}du_1du_2.
\end{align*}
Moreover, we denote 
\begin{align*}
	\widehat{J}_+(\alpha_\ell,x_\ell,t,s)=\frac{G}{D^{\frac{2\alpha_\ell+1}{2}}}\int_{\R^2}\big(-&\iota u_1-\iota\frac{C-B}{\sqrt{\Delta}} u_2\big)^{\alpha_\ell}_0\\\times\big(&\iota u_1+\iota \frac{C-B}{\sqrt{\Delta}} u_2\big)^{\alpha_\ell}_0e^{-\frac12u_1^2-\frac12u_2^2+\iota Gu_2x_\ell}du_1du_2
\end{align*}
and 
\begin{align*}
	\widetilde{J}_+(\alpha_\ell,x_\ell,t,s)=\frac{G}{D^{\frac{2\alpha_\ell+1}{2}}}\int_{\R^2}\big(-\iota u_1\big)^{\alpha_\ell}_0\times\big(\iota u_1\big)^{\alpha_\ell}_0e^{-\frac12u_1^2-\frac12u_2^2+\iota Gu_2x_\ell}du_1du_2.
\end{align*}

The rest of this section is divided into three parts.

{\bf Part I: The case that $x=0$, $Hd\le1$}

In this part, we assume $X\in G^{d,H}_{S,U}$. To obtain $\liminf_{\varepsilon\downarrow0}\E\big[L^{(\boldsymbol{\alpha})}_{+,\varepsilon}(T,0)\big]^2=+\infty$, define
$$\widehat{I}_{+,\varepsilon}^{(\boldsymbol{\alpha})}(0):=\frac{2}{(2\pi)^{2d}}\int_{[0,T]^2_<}\prod_{\ell=1}^{d}\widehat{J}_{+}(\alpha_\ell,0,t,s)dtds.$$
Then we will prove that there exists a positive function $f_{\boldsymbol{\alpha}}(\varepsilon)$ converging to 0 as $\varepsilon\downarrow0$ and a positive constant $c_{\boldsymbol{\alpha}}$, s.t. 
\[\lim_{\varepsilon\downarrow0}f_{\boldsymbol{\alpha}}(\varepsilon)\Big|\E\big[L^{(\boldsymbol{\alpha})}_{+,\varepsilon}(T,0)\big]^2-\widehat{I}_{+,\varepsilon}^{(\boldsymbol{\alpha})}(0)\Big|=0\hbox{ and }\liminf_{\varepsilon\downarrow0}f_{\boldsymbol{\alpha}}(\varepsilon)\widehat{I}_{+,\varepsilon}^{(\boldsymbol{\alpha})}(0)\ge c_{\boldsymbol{\alpha}}.\]
 Before we start the proof, we need to verify the following propositions about the notations defined at the beginning of this section. 

\begin{proposition}\label{acd}
	For $X\in G^{d,H}_{S,U}$, we have
	\begin{align*}
		(\kappa_H\wedge1)(t^{2H}+\varepsilon)&\le A\le(K_H\vee1)(t^{2H}+\varepsilon),\\(\kappa_H\wedge1)(s^{2H}+\varepsilon)&\le C\le(K_H\vee1)(s^{2H}+\varepsilon),\\
		(\kappa_{H}\wedge2)\big((t-s)^{2H}+\varepsilon\big)&\le D\le(K_H\vee2)\big((t-s)^{2H}+\varepsilon\big)
	\end{align*} 
	for all $t,s\ge0$ and $\varepsilon>0$.
\end{proposition}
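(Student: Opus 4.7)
My plan is to verify each of the three pairs of inequalities by a direct reduction to the two defining conditions of the class $G^{d,H}_{S,U}$: \eqref{smoi} will yield every upper bound and \eqref{slndp} every lower bound. Since the expressions for $A$ and $C$ are obtained from each other by swapping $s \leftrightarrow t$, it suffices to carry out the argument for $A$ and then for $D$, and then add the bookkeeping to convert constants like $K_H$ and $\kappa_H$ into $(K_H \vee 1)$, $(K_H \vee 2)$, $(\kappa_H \wedge 1)$, $(\kappa_H \wedge 2)$ via the trivial inequality $a + \varepsilon \le (c \vee 1)(a + \varepsilon)$ when $c \ge 1$, and its companion for $\wedge$.

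For $A = \Var(X^1_t) + \varepsilon$: I will apply \eqref{smoi} with $s = 0$, using $X^1_0 = 0$ (which holds for all the processes in the classes considered), to obtain $\Var(X^1_t) \le K_H t^{2H}$, whence the upper bound $A \le (K_H \vee 1)(t^{2H} + \varepsilon)$. For the lower bound I will invoke \eqref{slndp} with $m = 1$ at a conditioning point far from $t$, for instance $s_1 = 2t$; then $\min\{|t - s_1|^{2H}, t^{2H}\} = t^{2H}$, and combining with the trivial $\Var(X^1_t) \ge \Var(X^1_t | X^1_{s_1})$ yields $\Var(X^1_t) \ge \kappa_H t^{2H}$, hence $A \ge (\kappa_H \wedge 1)(t^{2H} + \varepsilon)$. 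The bounds on $C$ follow verbatim.

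For $D$: Using that $X^1$ is centered I will first rewrite $D = \E(X^1_t - X^1_s)^2 + 2\varepsilon$. The upper bound then follows at once from \eqref{smoi}. For the lower bound the only step that is not a direct quotation is the Gaussian inequality
\begin{equation*}
\Var(X^1_t - X^1_s) \;\ge\; \Var\bigl(X^1_t - \E[X^1_t | X^1_s]\bigr) \;=\; \Var(X^1_t | X^1_s),
\end{equation*}
which holds because $\E[X^1_t | X^1_s]$ is the $L^2$-best linear predictor of $X^1_t$ from $X^1_s$, so any other linear combination (in particular $1\cdot X^1_s$) has at least as large an error variance. Applying \eqref{slndp} with $m = 1$ and using $t > s \ge 0$ to reduce $\min\{(t-s)^{2H}, t^{2H}\}$ to $(t-s)^{2H}$, I obtain $\Var(X^1_t - X^1_s) \ge \kappa_H (t - s)^{2H}$, and therefore $D \ge (\kappa_H \wedge 2)((t - s)^{2H} + \varepsilon)$.

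The only step I expect to require more than a line is the comparison $\Var(X^1_t - X^1_s) \ge \Var(X^1_t | X^1_s)$, but it is a standard Gaussian fact and can be dispatched in one sentence. Everything else is a direct invocation of the hypotheses \eqref{slndp} and \eqref{smoi} combined with the elementary constant-absorption inequalities noted in the first paragraph.
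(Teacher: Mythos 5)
Your proposal is correct and follows essentially the same route as the paper: the paper's proof is exactly the reduction $\kappa_H(t-s)^{2H}\le\E(X^1_t-X^1_s)^2\le K_H(t-s)^{2H}$, with the upper bound from \eqref{smoi} and the lower bound from \eqref{slndp} together with $\E(X^1_t-X^1_s)^2\ge\Var(X^1_t|X^1_s)$, followed by the same constant absorption. Your extra details (conditioning at $s_1=2t$ for the marginal variances, the rewriting $D=\E(X^1_t-X^1_s)^2+2\varepsilon$, and the implicit $X^1_0=0$) are all consistent with how the paper itself uses these facts, e.g.\ in Proposition \ref{b}.
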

\begin{proof}
	Without loss of generality, assume $t>s\ge0$. The result of the proposition comes from the fact 
	\[\kappa_H(t-s)^{2H}\le\E(X^1_t-X^1_s)^2\le K_H(t-s)^{2H}\] 
	where in the right hand side of the inequalities we use \eqref{smoi} and in the left hand side of the inequalities we use {\bf SLND} \eqref{slndp} and the fact $\E(X^1_t-X^1_s)^2\ge\Var(X^1_t|X^1_s)$.
\end{proof}
\begin{proposition}\label{dgc-b}
	For $X\in G^{d,H}_{S,U}$, we have 
	\begin{equation}\label{delta}
		(\kappa_H\wedge1)^2\big((t-s)^{2H}+\varepsilon\big)\big(s^{2H}+\varepsilon\big)\le\Delta\le(K_H\vee2)^2\big(s^{2H}+\varepsilon\big)\big((t-s)^{2H}+\varepsilon\big),
	\end{equation}
	\begin{equation}\label{g}
		\sqrt{\frac{\kappa_H\wedge2}{(K_H\vee2)^2}}\cdot \big(s^{2H}+\varepsilon\big)^{-\frac{1}{2}}\le	G\le\sqrt{\frac{K_H\vee2}{(\kappa_H\wedge1)^2}}\cdot \big(s^{2H}+\varepsilon\big)^{-\frac{1}{2}} 
	\end{equation}
	\begin{equation}\label{c-b}
		|C-B|\le(K_H\vee1)\big(s^{2H}+\varepsilon\big)^{\frac12}\big((t-s)^{2H}+\varepsilon\big)^{\frac12}
	\end{equation}
	for all $t>s>0$ and $\varepsilon>0$.
\end{proposition}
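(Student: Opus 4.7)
My approach reduces all three bounds to the estimates of Proposition \ref{acd} via a short algebraic identity. A direct expansion using $D=A-2B+C$ gives
\[
\Delta \;=\; AC - B^2 \;=\; D\,C - (C-B)^2,
\]
which immediately yields the upper bound $\Delta\le DC$; then Proposition \ref{acd} together with the trivial comparison $K_H\vee 1 \le K_H\vee 2$ gives the upper bound in \eqref{delta}.

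For the lower bound in \eqref{delta}, I write $a=\E[X^1_t]^2$, $b=\E[X^1_tX^1_s]$, $c=\E[X^1_s]^2$, so that
\[
\Delta \;=\; (ac-b^2) + \varepsilon(a+c) + \varepsilon^2.
\]
The key input is {\bf SLND} \eqref{slndp}: with $m=1$ and $s_1=s$, we get $\Var(X^1_t\mid X^1_s)\ge \kappa_H(t-s)^{2H}$, hence $ac-b^2 = c\,\Var(X^1_t\mid X^1_s)\ge \kappa_H c(t-s)^{2H}$; moreover, applying \eqref{slndp} with $m=1$ and $s_1\ge 2r$ (so the minimum becomes $r^{2H}$) yields $\Var(X^1_r)\ge \kappa_H r^{2H}$ for any $r>0$, so in particular $c\ge \kappa_H s^{2H}$ and $a\ge \kappa_H t^{2H}\ge \kappa_H(t-s)^{2H}$. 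Assembling these four ingredients gives
\[
\Delta \;\ge\; \kappa_H^2 s^{2H}(t-s)^{2H} + \kappa_H\varepsilon(t-s)^{2H} + \kappa_H\varepsilon s^{2H} + \varepsilon^2,
\]
and comparing term-by-term with the expansion of $(\kappa_H\wedge 1)^2((t-s)^{2H}+\varepsilon)(s^{2H}+\varepsilon)$, using $\kappa_H^2,\kappa_H,1 \ge (\kappa_H\wedge 1)^2$, finishes the lower bound.

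The bounds on $G=\sqrt{D/\Delta}$ in \eqref{g} are then immediate from the bounds on $D$ and $\Delta$, since the common factor $(t-s)^{2H}+\varepsilon$ cancels in the ratio, leaving precisely the targeted dependence on $(s^{2H}+\varepsilon)^{-1}$. For \eqref{c-b}, I would apply Cauchy--Schwarz to the identity $c-b = \Cov(X^1_s,X^1_s-X^1_t)$ together with \eqref{smoi} (which in particular yields $\E(X^1_s)^2\le K_Hs^{2H}$ by taking the second time to be $0$) to obtain $|c-b|\le K_H s^H(t-s)^H$, whence $|C-B|\le |c-b|+\varepsilon \le (K_H\vee 1)\bigl(s^H(t-s)^H+\varepsilon\bigr)$, and the proof is completed by the AM--GM consequence $s^H(t-s)^H+\varepsilon \le \sqrt{(s^{2H}+\varepsilon)((t-s)^{2H}+\varepsilon)}$ (verified by squaring both sides and noting $2\sqrt{uv}\le u+v$). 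No genuine obstacle is expected here; the only nontrivial observation is the identity $\Delta=DC-(C-B)^2$, which simultaneously powers the upper bound on $\Delta$ and clarifies why the definition of $G$ involves exactly the ratio $\sqrt{D/\Delta}$.
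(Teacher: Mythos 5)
Your proposal is correct and follows essentially the same route as the paper: the identity $\Delta=DC-(C-B)^2$ for the upper bound (the paper's algebraic step is exactly this), the decomposition $ac-b^2=\Var(X^1_t|X^1_s)\,\E(X^1_s)^2$ together with {\bf SLND} for the lower bound, cancellation of the factor $(t-s)^{2H}+\varepsilon$ in $G=\sqrt{D/\Delta}$, and Cauchy--Schwarz plus \eqref{smoi} with the elementary inequality $\sqrt{uv}+\varepsilon\le\sqrt{(u+\varepsilon)(v+\varepsilon)}$ for \eqref{c-b}. The only differences (term-by-term comparison instead of the paper's factorization $(\kappa_H(t-s)^{2H}+\varepsilon)(\kappa_H s^{2H}+\varepsilon)$, and the order in which $\varepsilon$ is absorbed in the $|C-B|$ bound) are cosmetic.
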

\begin{proof}
	Using \eqref{smoi}, we get 
	\begin{equation}
		\begin{split}
			|C-B|&=\big|\E \big[X^1_s(X^1_s-X^1_t)\big]+\varepsilon\big|\\&\le\big[\E(X^1_s)^2\big]^{\frac12}\big[\E(X^1_t-X^1_s)^2\big]^{\frac12}+\varepsilon\\&\le\big(\E(X^1_s)^2+\varepsilon\big)^{\frac12}\big(\E(X^1_t-X^1_s)^2+\varepsilon\big)^{\frac12}\\&\le(K_H\vee1)\big(s^{2H}+\varepsilon\big)^{\frac12}\big((t-s)^{2H}+\varepsilon\big)^{\frac12}.
		\end{split}
	\end{equation}
	Using \eqref{smoi} and {\bf SLND} \eqref{slndp}, we get 
	\begin{equation}\label{delta1}
		\begin{split}
			\Delta&=AC-B^2\\&=\det\Cov(X^1_t,X^1_s)+\big(\E(X^1_t)^2+\E(X^1_s)^2\big)\varepsilon+\varepsilon^2\\&=\Var(X^1_t|X^1_s)\E(X^1_s)^2+\big(\E(X^1_t)^2+\E(X^1_s)^2\big)\varepsilon+\varepsilon^2\\&\ge(\kappa_{H})^2(t-s)^{2H}s^{2H}+\kappa_{H}\big(t^{2H}+s^{2H}\big)\varepsilon+\varepsilon^2\\&\ge(\kappa_{H})^2(t-s)^{2H}s^{2H}+\kappa_{H}\big((t-s)^{2H}+s^{2H}\big)\varepsilon+\varepsilon^2\\&=\big(\kappa_{H}(t-s)^{2H}+\varepsilon\big)\big(\kappa_{H}s^{2H}+\varepsilon\big)\\&\ge(\kappa_H\wedge1)^2\big((t-s)^{2H}+\varepsilon\big)\big(s^{2H}+\varepsilon\big).
		\end{split}
	\end{equation}
	Using Proposition \ref{acd} we get 
	\begin{equation}\label{delta2}
		\begin{split}
			\Delta=AC-B^2&=(D+2(B-C)+C)C-(C+B-C)^2\\&\le DC\le(K_H\vee2)^2\big(s^{2H}+\varepsilon\big)\big((t-s)^{2H}+\varepsilon\big).
		\end{split}
	\end{equation}
	Using \eqref{delta1}, \eqref{delta2} and Proposition \ref{acd} we get 
	\begin{equation*}
		\begin{split}
			G=\sqrt{\frac{D}{AC-B^2}}\le\sqrt{\frac{(K_H\vee2)\big((t-s)^{2H}+\varepsilon\big)}{(\kappa_H\wedge1)^2\big((t-s)^{2H}+\varepsilon\big)\big(s^{2H}+\varepsilon\big)}}\le\sqrt{\frac{K_H\vee2}{(\kappa_H\wedge1)^2}}\cdot \big(s^{2H}+\varepsilon\big)^{-\frac{1}{2}}
		\end{split}
	\end{equation*}	
	and 
	\begin{equation*}
		\begin{split}
			G=\sqrt{\frac{D}{AC-B^2}}\ge\sqrt{\frac{(\kappa_H\wedge2)\big((t-s)^{2H}+\varepsilon\big)}{(K_H\vee2)^2\big((t-s)^{2H}+\varepsilon\big)\big(s^{2H}+\varepsilon\big)}}\ge\sqrt{\frac{\kappa_H\wedge2}{(K_H\vee2)^2}}\cdot \big(s^{2H}+\varepsilon\big)^{-\frac{1}{2}}.
		\end{split}
	\end{equation*}
\end{proof}

Then we continue the proof of statement {\rm(1)} in Theorem \ref{nes}. 

For the case $|\boldsymbol{\alpha}|=0$, using coordinate transform $u=t-s,v=s$, Proposition \ref{acd} and \ref{dgc-b} we get 
\begin{align*}
	\E\big[L^{(\boldsymbol{0})}_{+,\varepsilon}(T,0)\big]^2&=\frac{2}{(2\pi)^{2d}}\int_{[0,T]^2_<}\prod_{\ell=1}^{d}J_{+}(0,0,t,s)dtds\\&\ge c\int_{[0,T]^2_<}\frac{G^d}{D^{\frac{d}{2}}}dtds\ge c\int_{0}^{\frac{T}{2}}\big(u^{2H}+\varepsilon\big)^{-\frac{d}{2}}du\int_{0}^{\frac{T}{2}}\big(v^{2H}+\varepsilon\big)^{-\frac{d}{2}}dv,
\end{align*}
where using Lemma \ref{lmaxiao}, when $Hd=1$,  there exists 
\[\E\big[L^{(\boldsymbol{0})}_{+,\varepsilon}(T,0)\big]^2\ge c\ln^2(1+\varepsilon^{-\frac12})\]
for any $\varepsilon\in(0,\frac12)$, which leads to the desired result. 

For the case $|\boldsymbol{\alpha}|>0$, consider 
\begin{align*}
	J_+(\alpha_\ell,0,t,s)=\frac{G}{D^{\frac{2\alpha_\ell+1}{2}}}\int_{\R^2}\big(-&\iota u_1-\iota\frac{C-B}{\sqrt{\Delta}} u_2+\iota\sqrt{D}Gu_2\big)^{\alpha_\ell}_0\\\times\big(&\iota u_1+\iota \frac{C-B}{\sqrt{\Delta}} u_2\big)^{\alpha_\ell}_0e^{-\frac12u_1^2-\frac12u_2^2}du_1du_2
\end{align*}
and 
\begin{align*}
	\widehat{J}_+(\alpha_\ell,0,t,s)=\frac{G}{D^{\frac{2\alpha_\ell+1}{2}}}\int_{\R^2}\big(-&\iota u_1-\iota\frac{C-B}{\sqrt{\Delta}} u_2\big)^{\alpha_\ell}_0\\\times\big(&\iota u_1+\iota \frac{C-B}{\sqrt{\Delta}} u_2\big)^{\alpha_\ell}_0e^{-\frac12u_1^2-\frac12u_2^2}du_1du_2.
\end{align*}
Using Lemma \ref{diff} and Propositions \ref{acd}, \ref{dgc-b}, we get 
\begin{align*}
	|\widehat{J}_+(\alpha_\ell,0,t,s)|\le&c\frac{G}{D^{\frac{2\alpha_\ell+1}{2}}}\int_{\R^2}\Big(|u_1|^{\alpha_\ell}+\Big|\frac{C-B}{\sqrt{\Delta}}\Big|^{\alpha_\ell}|u_2|^{\alpha_\ell}\Big)^2e^{-\frac12u_1^2-\frac12u_2^2}du_1du_2\\\le&c\big(\big(t-s)^{2H}+\varepsilon\big)^{-\frac{2\alpha_\ell+1}{2}}\big(s^{2H}+\varepsilon\big)^{-\frac{1}{2}}
\end{align*}
and
\begin{align*}
	&|J_+(\alpha_\ell,0,t,s)-\widehat{J}_+(\alpha_\ell,0,t,s)|\\\le&c\frac{G}{D^{\frac{2\alpha_\ell+1}{2}}}\int_{\R^2}\Big(\big|\sqrt{D}Gu_2\big|^{\alpha^*_\ell}\big|u_1+\frac{C-B}{\sqrt{\Delta}}u_2\big|^{2\alpha_\ell-\alpha^{*}_\ell}\\&\qquad\qquad\qquad\qquad\qquad\qquad+\big|\sqrt{D}Gu_2\big|^{\alpha_\ell}\big|u_1+\frac{C-B}{\sqrt{\Delta}}u_2\big|^{\alpha_\ell}\Big)e^{-\frac12u_1^2-\frac12u_2^2}du_1du_2\\\le&c\frac{G}{D^{\frac{2\alpha_\ell+1}{2}}}\int_{\R^2}\Big(2^{2\alpha_\ell-\alpha^{*}_\ell}|DG^2|^{\frac{\alpha^*_\ell}{2}}|u_2|^{\alpha^*_\ell}|u_1|^{2\alpha_\ell-\alpha^{*}_\ell}+2^{2\alpha_\ell-\alpha^{*}_\ell}\Big|\frac{C-B}{\sqrt{\Delta}}\Big|^{2\alpha_\ell-\alpha^{*}_\ell}|DG^2|^{\frac{\alpha^*_\ell}{2}}|u_2|^{2\alpha_\ell}\\&\qquad\qquad\qquad\qquad+2^{\alpha_\ell}|DG^2|^{\frac{\alpha_\ell}{2}}|u_1|^{\alpha_\ell}|u_2|^{\alpha_\ell}+2^{\alpha_\ell}|DG^2|^{\frac{\alpha_\ell}{2}}\Big|\frac{C-B}{\sqrt{\Delta}}\Big|^{\alpha_\ell}|u_2|^{2\alpha_\ell}\Big)e^{-\frac12u_1^2-\frac12u_2^2}du_1du_2\\\le& c\Big(D^{-\frac{2\alpha_\ell-\alpha^*_\ell+1}{2}}G^{1+\alpha^*_\ell}+D^{-\frac{\alpha_\ell+1}{2}}G^{1+\alpha_\ell}\Big)\\\le&c\Big(\big(\big(t-s)^{2H}+\varepsilon\big)^{-\frac{2\alpha_\ell-\alpha^*_\ell+1}{2}}\big(s^{2H}+\varepsilon\big)^{-\frac{1+\alpha^*_\ell}{2}}+\big((t-s)^{2H}+\varepsilon\big)^{-\frac{\alpha_\ell+1}{2}}\big(s^{2H}+\varepsilon\big)^{-\frac{1+\alpha_\ell}{2}}\Big),
\end{align*}
where for all $\alpha\ge0$, $\alpha^*$ is defined in \eqref{alphastar} and we get  $0<\alpha^*=\widetilde{\alpha}+(\bar{\alpha}\wedge1)\mathbf{1}_{\{\widetilde{\alpha}=0\}}<\alpha$ for all $\alpha>0$ and $\alpha^{*}=0$ for $\alpha=0$.
So that
\begin{align}\label{thm211}
	&\int_{[0,T]^2_<}\Big|\prod_{\ell=1}^{d}J_+(\alpha_\ell,0,t,s)-\prod_{\ell=1}^{d}\widehat{J}_+(\alpha_\ell,0,t,s)\Big|dtds\nonumber\\\le&c\sum_{k=1:\alpha_k>0}^{d}\int_{[0,T]^2_<}\Big|J_+(\alpha_k,0,t,s)-\widehat{J}_+(\alpha_k,0,t,s)\Big|\nonumber\\&\qquad\qquad\qquad\times\prod_{\ell=1,\ell\ne k}\Big(|J_+(\alpha_\ell,0,t,s)-\widehat{J}_+(\alpha_\ell,0,t,s)|+|\widehat{J}_+(\alpha_\ell,0,t,s)|\Big)dtds\nonumber\\\le&c\sum_{k=1:\alpha_k>0}^{d}\sum_{\widehat{\sS}_k}\int_{[0,T]^2_<}\big((t-s)^{2H}+\varepsilon\big)^{-\sum_{\ell=1}^{d}\frac{2\alpha_\ell-(q_\ell\alpha_\ell+\bar{q}_\ell\alpha^*_\ell)+1}{2}}\big(s^{2H}+\varepsilon\big)^{-\sum_{\ell=1}^{d}\frac{1+(q_\ell\alpha_\ell+\bar{q}_\ell\alpha^*_\ell)}{2}}dtds\nonumber\\\le&c\sum_{k=1:\alpha_k>0}^{d}\sum_{\widehat{\sS}_k}\int_{0}^T\big(u^{2H}+\varepsilon\big)^{-\sum_{\ell=1}^{d}\frac{2\alpha_\ell-(q_\ell\alpha_\ell+\bar{q}_\ell\alpha^*_\ell)+1}{2}}du\int_{0}^T\big(v^{2H}+\varepsilon\big)^{-\sum_{\ell=1}^{d}\frac{1+(q_\ell\alpha_\ell+\bar{q}_\ell\alpha^*_\ell)}{2}}dv\nonumber\\\le&\left\{\begin{array}{cl}
		c\varepsilon^{\frac{1}{2H}-|\boldsymbol{\alpha}|-\frac{d}{2}},	&Hd=1,H(2|\boldsymbol{\alpha}|+d)>1,\\c\varepsilon^{\frac{1}{2H}-|\boldsymbol{\alpha}|-\frac{d}{2}+\beta},&Hd<1,H(2|\boldsymbol{\alpha}|+d)>1,\\c, &Hd<1,H(2|\boldsymbol{\alpha}|+d)=1,
	\end{array}\right.
\end{align} 
for any $\varepsilon\in(0,\frac12)$, where $$\widehat{\sS}_k=\{q_\ell,\bar{q}_\ell,\tilde{q}_\ell\in\{0,1\}:\tilde{q}_k=0,q_\ell+\bar{q}_\ell+\tilde{q}_\ell=1,\ell=1,\cdots,d\},$$
\[\beta=\frac{1}{4}\Big\{\min\{\alpha_\ell^{*}:\alpha_\ell^{*}>0,\ell=1,\cdots,d\}\wedge\big(\frac{1}{H}-d\big)\wedge\big(2|\boldsymbol{\alpha}|+d-\frac{1}{H}\big)\Big\},\] the third inequality comes from coordinate transform $u=t-s,v=s$ and using Lemma \ref{lmaxiao} we can obtain the last inequality. Moreover, for $\widehat{J}_{+}(\alpha_\ell,0,t,s)$, using Proposition \ref{acd}, Proposition \ref{dgc-b} and coordinate transform $u=t-s,v=s$,
\begin{align}
		\widehat{I}_{+,\varepsilon}^{(\boldsymbol{\alpha})}(0)=&\frac{2}{(2\pi)^{2d}}\int_{[0,T]^2_<}\prod_{\ell=1}^{d}\widehat{J}_{+}(\alpha_\ell,0,t,s)dtds\nonumber\\=&\frac{2}{(2\pi)^{2d}}\int_{[0,T]^2_<}\prod_{\ell=1}^{d}\bigg(\frac{G}{D^{\frac{2\alpha_\ell+1}{2}}}\int_{\R^2} |u_2|^{2\alpha_\ell}\exp\Big(-\frac12\big(u_1-\frac{C-B}{\sqrt{\Delta}} u_2\big)^2-\frac12u_2^2\Big)du_1du_2\bigg)dtds\nonumber\\\ge& c\int_{[0,T]^2_<}\prod_{\ell=1}^{d}\bigg(\frac{G}{D^{\frac{2\alpha_\ell+1}{2}}}\int_{\R^2} |u_2|^{2\alpha_\ell}\exp\Big(-\frac{c_1}2(u_1^2+u_2^2)\Big)du_1du_2\bigg)dtds\nonumber\\\ge& c\int_{0}^{\frac{T}{2}}\big(u^{2H}+\varepsilon\big)^{-\frac{2|\boldsymbol{\alpha}|+d}{2}}du\int_{0}^{\frac{T}{2}}\big(v^{2H}+\varepsilon\big)^{-\frac{d}{2}}dv\nonumber\\\ge&\left\{\begin{array}{cl}
	c\ln(1+\varepsilon^{-\frac12})\varepsilon^{\frac{1}{2H}-|\boldsymbol{\alpha}|-\frac{d}{2}},	&Hd=1,H(2|\boldsymbol{\alpha}|+d)>1,\\c\varepsilon^{\frac{1}{2H}-|\boldsymbol{\alpha}|-\frac{d}{2}},&Hd<1,H(2|\boldsymbol{\alpha}|+d)>1,\\c\ln(1+\varepsilon^{-\frac12}),&Hd<1,H(2|\boldsymbol{\alpha}|+d)=1,
\end{array}\right.\label{thm212}
\end{align}
for any $\varepsilon\in(0,\frac12)$. Then combining \eqref{thm211} with \eqref{thm212} we can get 
\[\E\big[L^{(\boldsymbol{\alpha})}_{+,\varepsilon}(T,0)\big]^2\\\ge\left\{\begin{array}{cl}
	c\ln(1+\varepsilon^{-\frac12})\varepsilon^{\frac{1}{2H}-|\boldsymbol{\alpha}|-\frac{d}{2}},	&Hd=1,H(2|\boldsymbol{\alpha}|+d)>1,\\c\varepsilon^{\frac{1}{2H}-|\boldsymbol{\alpha}|-\frac{d}{2}},&Hd<1,H(2|\boldsymbol{\alpha}|+d)>1,\\c\ln(1+\varepsilon^{-\frac12}),&Hd<1,H(2|\boldsymbol{\alpha}|+d)=1,
\end{array}\right.\]
for any $\varepsilon\in(0,\frac12)$, which completes the proof of {\rm(1)} in Theorem \ref{nes}.

{\bf Part II: The case $x=0$, $Hd>1$}

In this part, we assume $X\in {G}^{d,H}_{S,U}$ for  $\sum\limits_{\ell=1,\alpha_\ell\in\Z}^{d}\alpha_\ell$ even and $X\in \widehat{G}^{d,H}_{S,U}$ for $\sum\limits_{\ell=1,\alpha_\ell\in\Z}^{d}\alpha_\ell$ odd. Denote $I=\{\ell:\ell=1,2,\cdots,d,\alpha_\ell\in\Z\}$ to be the set of $\ell$ s.t. $\alpha_\ell$ is integer. To obtain $\liminf_{\varepsilon\downarrow0}\E\big[L^{(\boldsymbol{\alpha})}_{+,\varepsilon}(T,0)\big]^2=+\infty$, we use the fact 
\[\E\big[L^{(\boldsymbol{\alpha})}_{+,\varepsilon}(T,0)\big]^2=\E\bigg(\E\Big[\big[L^{(\boldsymbol{\alpha})}_{+,\varepsilon}(T,0)\big]^2\Big|X^\ell,\ell\in I\Big]\bigg)\ge\E\Big[\E\big(L^{(\boldsymbol{\alpha})}_{+,\varepsilon}(T,0)|X^\ell,\ell\in I\big)\Big]^2,\]
which comes from Jensen's inequality and prove $$\liminf_{\varepsilon\downarrow0}\E\Big[\E\big(L^{(\boldsymbol{\alpha})}_{+,\varepsilon}(T,0)|X^\ell,\ell\in I\big)\Big]^2=+\infty.$$
Before we start the proof, we need to verify the following proposition: 

\begin{proposition}\label{b}
	For $X\in G^{d,H}_{S,U}$, we have $	B\ge\frac{\kappa_H}{2}s^{2H}$ for any $0<s<t$ with $\frac{t-s}{s}\le\sqrt[H]{\frac{2\kappa_H}{K_H}}$.
\end{proposition}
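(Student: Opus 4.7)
The plan is to exploit the polarization identity
\[
2B = \E[(X^1_t)^2] + \E[(X^1_s)^2] - \E[(X^1_t-X^1_s)^2],
\]
which reduces the claim to lower bounds on the two unconditional second moments and an upper bound on the squared increment. The first two are handled by applying the SLND property \eqref{slndp} with $m=1$ and a single conditioning time placed far from $t$ (for instance $s_1=2t$, so that $|t-s_1|=t$); this forces the minimum inside \eqref{slndp} to equal $t^{2H}$, and since conditioning can only reduce variance, $\E[(X^1_t)^2]=\Var(X^1_t)\ge\Var(X^1_t\mid X^1_{s_1})\ge\kappa_H t^{2H}$. The identical argument with $t$ replaced by $s$ yields $\E[(X^1_s)^2]\ge\kappa_H s^{2H}$, and \eqref{smoi} directly furnishes $\E[(X^1_t-X^1_s)^2]\le K_H(t-s)^{2H}$.

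Substituting these three estimates into the polarization identity gives
\[
2B \ \ge\ \kappa_H t^{2H}+\kappa_H s^{2H}-K_H(t-s)^{2H},
\]
so the conclusion $B\ge\frac{\kappa_H}{2}s^{2H}$ follows once one verifies $\kappa_H t^{2H}\ge K_H(t-s)^{2H}$ under the hypothesis $(t-s)/s\le\sqrt[H]{2\kappa_H/K_H}$. I would recast both sides in terms of $\rho=(t-s)/s$ and, after invoking the super-additivity $t^{2H}=(s+(t-s))^{2H}\ge s^{2H}+(t-s)^{2H}$, reduce the task to showing $\kappa_H s^{2H}\ge(K_H-\kappa_H)(t-s)^{2H}$; raising the hypothesis to the $2H$-th power then converts this into the identity $(K_H-2\kappa_H)^2\ge 0$, which is automatic and closes the argument.

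I expect the last algebraic step to be the main technical obstacle: the hypothesis is stated through the $H$-th root of $2\kappa_H/K_H$ whereas the target inequality naturally rescales via the $(2H)$-th root of $\kappa_H/K_H$, and the super-additivity of $x\mapsto x^{2H}$ invoked above is at hand only for $H\ge\tfrac12$. For $H<\tfrac12$ one may need a more delicate use of SLND that folds the $(t-s)^{2H}$ scale directly into the lower bound on $\E[(X^1_t)^2]$ (for instance, by conditioning also on $X^1_s$ and combining the resulting estimate with the above), but the polarization framework described should still close the argument.
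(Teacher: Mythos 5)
Your polarization identity, the bounds $\E(X_t^1)^2\ge\Var\big(X_t^1\,|\,X_{2t}^1\big)\ge\kappa_H t^{2H}$, $\E(X_s^1)^2\ge\kappa_H s^{2H}$ and $\E(X_t^1-X_s^1)^2\le K_H(t-s)^{2H}$ are all correct, and for $H\ge\tfrac12$ your chain (superadditivity of $x\mapsto x^{2H}$, then $(K_H-2\kappa_H)^2\ge0$) does close the argument at the stated threshold. The genuine gap is exactly the case $H<\tfrac12$, which the proposition also covers: there $x\mapsto x^{2H}$ is concave, hence subadditive, so $t^{2H}\ge s^{2H}+(t-s)^{2H}$ fails, and what your scheme actually needs, namely $\kappa_H t^{2H}\ge K_H(t-s)^{2H}$, is not implied by $\tfrac{t-s}{s}\le\big(\tfrac{2\kappa_H}{K_H}\big)^{1/H}$: using only $t^{2H}\ge s^{2H}$ this reduces to $4\kappa_H\le K_H$, which need not hold, and at the extreme of the allowed range your lower bound $\kappa_H t^{2H}+\kappa_H s^{2H}-K_H(t-s)^{2H}$ can fall strictly below $\kappa_H s^{2H}$ (e.g.\ $K_H=2\kappa_H$, $H=\tfrac14$, $t=2s$). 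Your proposed repair --- conditioning also on $X_s^1$ --- cannot fix this, because conditioning only decreases variance; SLND applied with $s$ among the conditioning points yields $\kappa_H\min\{(t-s)^{2H},t^{2H}\}$, which is a weaker lower bound on $\E(X_t^1)^2$ than the one you already used. So as written the proof covers only $H\in[\tfrac12,1)$.

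The paper avoids this dichotomy by never splitting $\E(X_t^1)^2$ at all: it writes $B=\E\big[(X_t^1-X_s^1)X_s^1\big]+\E(X_s^1)^2$, bounds the cross term by Cauchy--Schwarz and \eqref{smoi} as $K_H(t-s)^H s^H$ (one factor $(t-s)^H$ only), uses your same conditioning trick for $\E(X_s^1)^2\ge\kappa_H s^{2H}$, and concludes $B\ge\kappa_H s^{2H}-K_H(t-s)^H s^H\ge\tfrac{\kappa_H}{2}s^{2H}$ once $(t-s)^H\le\tfrac{\kappa_H}{2K_H}s^H$ --- an argument uniform in $H\in(0,1)$. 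Note that the paper's proof in fact requires the threshold $\sqrt[H]{\kappa_H/(2K_H)}$ rather than the $\sqrt[H]{2\kappa_H/K_H}$ displayed in the statement (the latter appears to be a typo); under that smaller threshold your polarization route also closes for every $H$, since then $K_H(t-s)^{2H}\le\tfrac{\kappa_H^2}{4K_H}s^{2H}\le\kappa_H s^{2H}\le\kappa_H t^{2H}$ using $\kappa_H\le K_H$. Conversely, it is worth noting that for $H\ge\tfrac12$ your argument proves the statement at the larger stated threshold, which the paper's own computation does not.
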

\begin{proof}
	Using \eqref{smoi}, {\bf SLND} \eqref{slndp} and the fact $\E(X^1_s)^2\ge\Var(X^1_s|X^1_{2s})\ge\kappa_Hs^{2H}$, we get 
	\begin{align*}
		\E(X^1_tX^1_s)=&\E\big[(X^1_t-X^1_s)X^1_s\big]+\E(X^1_s)^2\\\ge&\E(X^1_s)^2-\big(\E(X^1_t-X^1_s)^2\big)^{\frac12}\big(\E(X^1_s)^2\big)^{\frac12}\\\ge&\kappa_Hs^{2H}-K_H(t-s)^Hs^H\\\ge&\kappa_Hs^{2H}-K_H\Big(\frac{\kappa_H}{2K_H}s^H\Big)s^H=\frac{\kappa_H}{2}s^{2H}
	\end{align*}
	for any $0<s<t$ with $\frac{t-s}{s}\le\sqrt[H]{\frac{\kappa_H}{2K_H}}$.
\end{proof}

From \eqref{fdalpha} and \eqref{approx}, 
\begin{align*}
	L^{(\boldsymbol{\alpha})}_{+,\varepsilon}(T,0)=\frac{1}{(2\pi)^d}\int_{0}^{T}\int_{\R^d}&\prod_{\ell=1}^{d}(\iota y_{\ell})_0^{\alpha_\ell}e^{\iota y\cdot X_t}e^{-\frac{\varepsilon}{2}|y|^2}dydt,\\
	\E\big(L^{(\boldsymbol{\alpha})}_{+,\varepsilon}(T,0)|X^\ell,\ell\in I\big)=\frac{1}{(2\pi)^d}\int_{0}^{T}&\prod_{\ell\in I}\Big(\int_{\R^d}(\iota y_{\ell})_0^{\alpha_\ell}e^{\iota y_\ell X_t^\ell}e^{-\frac{\varepsilon}{2}|y_\ell|^2}dy_\ell \Big)\\\times&\prod_{\ell\notin I}\Big(\int_{\R^d}(\iota y_{\ell})_0^{\alpha_\ell}e^{-\frac{\varepsilon}{2}|y_\ell|^2(\E(X_t^\ell)^2+\varepsilon)}dy_\ell \Big) dt
\end{align*}
and 
\begin{align*}
	&\E\Big[\E\big(L^{(\boldsymbol{\alpha})}_{+,\varepsilon}(T,0)|X^\ell,\ell\in I\big)\Big]^2\\=&\frac{2}{(2\pi)^{2d}}\int_{[0,T]^2_<}\prod_{\ell\in I}\bigg(\int_{\R^{2}}(-1)^{\alpha_\ell}v_1^{\alpha_\ell}v_2^{\alpha_\ell}\exp\Big(-\frac{1}{2}\big(Av_1^2+2Bv_1v_2+Cv_2^2\big)\Big)dv_1dv_2\bigg)\\&\qquad\qquad\qquad\times\prod_{\ell\notin I}\bigg(\int_{\R^{2}}(\iota v_1)^{\alpha_\ell}_0(\iota v_2)^{\alpha_\ell}_0\exp\Big(-\frac{1}{2}\big(Av_1^2+Cv_2^2\big)\Big)dv_1dv_2\bigg)dtds.
\end{align*}

From Lemma \ref{lmahx} and the fact that $\sum\limits_{\ell=1,\alpha_\ell\in\Z}^{d}\alpha_\ell$ is even or $B\ge0$ for $X\in \widehat{G}^{d,H}_{S,U}$, we get 
\begin{equation}\label{Ie}
	\begin{split}
		&\prod_{\ell\in I}\bigg(\int_{\R^{2}}(-1)^{\alpha_\ell}v_1^{\alpha_\ell}v_2^{\alpha_\ell}\exp\Big(-\frac{1}{2}\big(Av_1^2+2Bv_1v_2+Cv_2^2\big)\Big)dv_1dv_2\bigg)\\=&\prod_{\ell\in I}\bigg(\sum_{m=0,\text{m even}}^{\alpha_\ell}\frac{c_{\alpha_\ell,m}B^{\alpha_\ell-m}}{(AC-B^2)^{\frac{2\alpha_\ell-m+1}{2}}}\bigg)\ge\prod_{\ell\in I}\frac{c_{\alpha_\ell,0}B^{\alpha_\ell}}{(AC-B^2)^{\frac{2\alpha_\ell+1}{2}}},
	\end{split}
\end{equation}
where $c_{\alpha_\ell,0}=\frac{\alpha_\ell!(2\alpha_\ell-1)!!}{(\alpha_\ell)!}=(2\alpha_\ell-1)!!$ for $\ell\in I$. Moreover,  
\begin{align*}
	&\prod_{\ell\notin I}\bigg(\int_{\R^{2}}(\iota v_1)^{\alpha_\ell}_0(\iota v_2)^{\alpha_\ell}_0\exp\Big(-\frac{1}{2}\big(Av_1^2+Cv_2^2\big)\Big)dv_1dv_2\bigg)\\=&\prod_{\ell\notin I}(AC)^{-\frac{\alpha_\ell+1}{2}}\Big(\int_{\R}|v|^{\alpha_\ell}e^{\iota\frac{\pi\alpha_\ell}{2}\sgn(v)}e^{-\frac{1}{2}v^2}dv\Big)^2\\=&\prod_{\ell\notin I}(AC)^{-\frac{\alpha_\ell+1}{2}}\Big(\int_{\R}|v|^{\alpha_\ell}\cos(\frac{\pi\alpha_\ell}{2})e^{-\frac{1}{2}v^2}dv\Big)^2.
\end{align*}
So that 
\begin{align*}
	&\E\big[L^{(\boldsymbol{\alpha})}_{+,\varepsilon}(T,0)\big]^2\\\ge&\E\Big[\E\big(L^{(\boldsymbol{\alpha})}_{+,\varepsilon}(T,0)|X^\ell,\ell\in I\big)\Big]^2\\=&\frac{2}{(2\pi)^{2d}}\int_{[0,T]^2_<}\prod_{\ell\in I}\bigg(\int_{\R^{2}}(-1)^{\alpha_\ell}v_1^{\alpha_\ell}v_2^{\alpha_\ell}\exp\Big(-\frac{1}{2}\big(Av_1^2+2Bv_1v_2+Cv_2^2\big)\Big)dv_1dv_2\bigg)\\&\qquad\qquad\qquad\qquad\qquad\qquad\times\prod_{\ell\notin I}(AC)^{-\frac{\alpha_\ell+1}{2}}\Big(\int_{\R}|v|^{\alpha_\ell}\cos(\frac{\pi\alpha_\ell}{2})e^{-\frac{1}{2}v^2}dv\Big)^2dtds\\\ge&c\int_{[0,T]^2_<}\prod_{\ell\in I}\frac{B^{\alpha_\ell}}{(AC-B^2)^{\frac{2\alpha_\ell+1}{2}}}\times\prod_{\ell\notin I}(AC)^{-\frac{\alpha_\ell+1}{2}}dtds\\\ge&c\int_{0}^{\widetilde{T}}\int_{s}^{\widetilde{s}(s)}\prod_{\ell\in I}\frac{B^{\alpha_\ell }}{(AC-B^2)^{\frac{2\alpha_\ell+1}{2}}}\times\prod_{\ell\notin I}(AC)^{-\frac{\alpha_\ell+1}{2}}dtds\\\ge&c\int_{0}^{\widetilde{T}}\int_{s}^{\widetilde{s}(s)}\prod_{\ell\in I}\frac{s^{2H\alpha_\ell }}{(s^{2H}+\varepsilon)^{2\alpha_\ell+1}}\times\prod_{\ell\notin I}(s^{2H}+\varepsilon)^{-\alpha_\ell-1}dtds\\=&c'\int_{0}^{\widetilde{T}}\prod_{\ell\in I}\frac{s^{2H\alpha_\ell }}{(s^{2H}+\varepsilon)^{2\alpha_\ell+1}}\times\prod_{\ell\notin I}(s^{2H}+\varepsilon)^{-\alpha_\ell-1}sds\\=&c'\int_{0}^{\widetilde{T}}\frac{s^{1+2H\sum_{\ell\in I}\alpha_\ell }}{(s^{2H}+\varepsilon)^{\sum_{\ell\in I}\alpha_\ell+|\boldsymbol{\alpha}|+d}}ds,
\end{align*}
where $\widetilde{T}=\Big(1+\sqrt[H]{\frac{2\kappa_H}{K_H}}\Big)^{-1}T$, $\widetilde{s}(s)=\Big(1+\sqrt[H]{\frac{2\kappa_H}{K_H}}\Big)s$ (see figure \ref{intarea1}), the first inequality comes from Jensen's inequality, we use \eqref{Ie} in the second inequality and the last inequality comes from Propositions \ref{acd} and \ref{b}. 

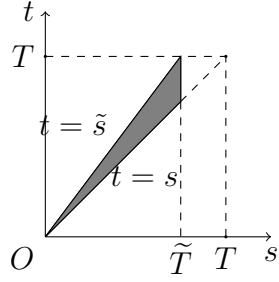
\begin{figure}[H]
	\centering
	\begin{tikzpicture}[scale=0.6]
		\draw[->](0,0)node[below left]{$O$}--(5,0)node[below]{$s$};
		\draw[->](0,0)--(0,5)node[left]{$t$};
		\draw[dashed](3,3)--(3,0);
		\draw[dashed](3,3)--(4,4);
		\draw[-](0,0)--(3,4);
		\draw[-](0,0)--(3,3);
		\draw[-](3,4)--(3,3);
		\draw(4,0)node[below]{$T$};
		\fill(4,0)circle(1pt);
		\fill(0,4)circle(1pt);
		\fill(4,4)circle(1pt);
		\draw(0,4)node[left]{$T$};
		\draw[dashed](0,4)--(4,4);
		\draw[dashed](4,0)--(4,4);
		\fill(3,0.1)node[below]{$\widetilde{T}$};
		\fill(1.6,2.5)node[left]{$t=\tilde{s}$};
		\fill(2.2,1.8)node[below]{$t=s$};
		\filldraw[fill=gray](0,0)--(3,3)--(3,4)--(0,0);
	\end{tikzpicture}
	\caption{The shadow stands for the area $\{(s,t):0<s<\widetilde{T},s<t<\widetilde{s}(s)\}$}
	\label{intarea1}
\end{figure} 

Then using Lemma \ref{lmaxiao}, because $Hd>1$, we get 
$\frac{2+2H\sum_{\ell\in I}\alpha_\ell}{\sum_{\ell\in I}\alpha_\ell+|\boldsymbol{\alpha}|+d}-2H<0$
and 
\[\E\big[L^{(\boldsymbol{\alpha})}_{+,\varepsilon}(T,0)\big]^2\ge c\varepsilon^{\frac{2+2H\sum_{\ell\in I}\alpha_\ell}{\sum_{\ell\in I}\alpha_\ell+|\boldsymbol{\alpha}|+d}-2H}\]
for any $\varepsilon\in(0,\frac12)$, which completes statements {\rm(2)} and {\rm(3)} in Theorem \ref{nes}.

{\bf Part III: The case that $\alpha_\ell$ is integer and $x_\ell\ne0$ for some $\ell\in\{1,2,\cdots,d\}$}

In this part, we assume $X\in\widetilde{G}^{d,H}_{S,U}$ and for some $\ell_0\in\{1,2,\cdots,d\}$, $\alpha_{\ell_0}\in\Z$ and $x_{\ell_0}\ne0$. Like {\bf Part I}, we define
$$\widetilde{I}_{+,\varepsilon}^{(\boldsymbol{\alpha})}(x)=\frac{2}{(2\pi)^{2d}}\int_{[0,T]^2_<}\prod_{\ell=1}^{d}\widetilde{J}_{+}(\alpha_\ell,x_\ell,t,s)dtds.$$
Then we will prove that there exists a positive function $f_{\boldsymbol{\alpha}}(\varepsilon)$ converging to 0 as $\varepsilon\downarrow0$ and a positive constant $c_{\boldsymbol{\alpha}}$, s.t. 
\[\lim_{\varepsilon\downarrow0}f_{\boldsymbol{\alpha}}(\varepsilon)\Big|\E\big[L^{(\boldsymbol{\alpha})}_{+,\varepsilon}(T,x)\big]^2-\widetilde{I}_{+,\varepsilon}^{(\boldsymbol{\alpha})}(x)\Big|=0\hbox{ and }\liminf_{\varepsilon\downarrow0}f_{\boldsymbol{\alpha}}(\varepsilon)\widetilde{I}_{+,\varepsilon}^{(\boldsymbol{\alpha})}(x)\ge c_{\boldsymbol{\alpha}}.\]

 Then we give the following proposition:

\begin{proposition}\label{a-b&c-b}
	For $X\in\widetilde{G}^{d,H}_{S,U}$, there exist a positive constant $\widetilde{K}$ and a non-negative decreasing function $\widetilde{\beta}(\gamma):(1,\infty)\to\mathbb{R}$ with $\lim\limits_{\gamma\to\infty}\widetilde{\beta}(\gamma)=0$, s.t.
	\begin{align}\label{btilde}
		M:=\max\{|A-B|,|C-B|\}\le \widetilde{K}\Big(\widetilde{\beta}(\gamma)\big(s^{2H}+\varepsilon\big)^{\frac12}\big((t-s)^{2H}+\varepsilon\big)^{\frac12}+\gamma^H\big((t-s)^{2H}+\varepsilon\big)\Big)
	\end{align}
	for all $t>s>0$, $\gamma>1$ and $\varepsilon>0$. 
\end{proposition}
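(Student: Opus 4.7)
The plan is to rewrite $A-B$ and $C-B$ in terms of the increment $X^1_t-X^1_s$ and the ``base'' value $X^1_s$, and then feed the decomposition into Lemma \ref{bocoi}, taking care of the extra $\varepsilon$ terms by observing that $\gamma^H>1$ for $\gamma>1$.

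First, starting from the definitions of $A$, $B$, $C$, I would write
\[
C-B=\E[X^1_s(X^1_s-X^1_t)]+\varepsilon=-\E[(X^1_t-X^1_s)X^1_s]+\varepsilon,
\]
\[
A-B=\E[X^1_t(X^1_t-X^1_s)]+\varepsilon=\E(X^1_t-X^1_s)^2+\E[(X^1_t-X^1_s)X^1_s]+\varepsilon.
\]
For both expressions, the covariance-of-increments term $|\E[(X^1_t-X^1_s)X^1_s]|$ is controlled by Lemma \ref{bocoi}, which yields
\[
|\E[(X^1_t-X^1_s)X^1_s]|\le K\bigl(\widetilde{\beta}(\gamma)s^{H}(t-s)^{H}+\gamma^{H}(t-s)^{2H}\bigr),
\]
while the increment term $\E(X^1_t-X^1_s)^2$ appearing only in $A-B$ is bounded by $K_H(t-s)^{2H}$ thanks to \eqref{smoi}.

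Next I would convert the right-hand sides to the form requested in the statement. Using the elementary inequalities
\[
s^{H}(t-s)^{H}\le (s^{2H}+\varepsilon)^{1/2}\bigl((t-s)^{2H}+\varepsilon\bigr)^{1/2},\qquad (t-s)^{2H}\le (t-s)^{2H}+\varepsilon,
\]
the $\widetilde{\beta}(\gamma)$ term already sits in the desired shape. For the residual additive $\varepsilon$ coming from the definitions of $A,C$, I would use that $\gamma^H>1$ for $\gamma>1$, so that
\[
\varepsilon\le \gamma^H\varepsilon\le \gamma^H\bigl((t-s)^{2H}+\varepsilon\bigr),
\]
and similarly $K_H(t-s)^{2H}\le K_H\gamma^H((t-s)^{2H}+\varepsilon)$. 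Collecting terms, one obtains
\[
|A-B|+|C-B|\le \widetilde{K}\bigl(\widetilde{\beta}(\gamma)(s^{2H}+\varepsilon)^{1/2}\bigl((t-s)^{2H}+\varepsilon\bigr)^{1/2}+\gamma^{H}\bigl((t-s)^{2H}+\varepsilon\bigr)\bigr),
\]
with $\widetilde{K}$ depending only on $K_H$, $\kappa_H$ and the constant $K$ furnished by Lemma \ref{bocoi}; this dominates the maximum $M$ and closes the estimate.

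There is no substantive obstacle: the argument is essentially a two-line accounting exercise built on Lemma \ref{bocoi}, and the only subtlety is remembering that $A$ and $C$ are regularized by $\varepsilon$, so one must absorb that $\varepsilon$ into the second summand via $\gamma^H>1$ rather than into the $\widetilde{\beta}(\gamma)$ summand (since $\widetilde{\beta}(\gamma)\to 0$ and cannot be used to dominate an additive constant). If anything, the minor bookkeeping point is choosing $\widetilde{\beta}$ to be the same for both $|A-B|$ and $|C-B|$, which is immediate from the fact that Lemma \ref{bocoi} already provides a single function $\widetilde{\beta}$ working for both bounds.
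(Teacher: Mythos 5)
Your proposal is correct and follows essentially the same route as the paper: decompose $A-B$ and $C-B$ into the increment-covariance term handled by Lemma \ref{bocoi}, bound $\E(X^1_t-X^1_s)^2$ via \eqref{smoi}, and absorb the regularizing $\varepsilon$ into the $\gamma^H\big((t-s)^{2H}+\varepsilon\big)$ summand using $\gamma^H>1$. The only cosmetic difference is that the paper cites Proposition \ref{acd} for the increment bound, which amounts to the same use of \eqref{smoi}.
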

\begin{proof}
	For $C-B$, we have 
	\begin{align*}
		|C-B|&=\big|\E \big[X^1_s(X^1_s-X^1_t)\big]+\varepsilon\big|\\&\le\big|\E \big[X^1_s(X^1_t-X^1_s)\big]\big|+\varepsilon\\&\le K\big(\widetilde{\beta}(\gamma)s^{H}(t-s)^{H}+\gamma^H(t-s)^{2H}\big)+\varepsilon\\&\le(K\vee1)\Big(\widetilde{\beta}(\gamma)\big(s^{2H}+\varepsilon\big)^{\frac12}\big((t-s)^{2H}+\varepsilon\big)^{\frac12}+\gamma^H\big((t-s)^{2H}+\varepsilon\big)\Big),
	\end{align*}
	where in the second inequality we use Lemma \ref{bocoi}. For $A-B$, we have 
	\begin{align*}
		|A-B|&=\big|\E \big[X^1_t(X^1_t-X^1_s)\big]+\varepsilon\big|\\&\le\E(X^1_t-X^1_s)^2+\big|\E \big[X^1_s(X^1_s-X^1_t)\big]\big|+\varepsilon\\&\le(K_H\vee1)\big((t-s)^{2H}+\varepsilon\big)+K\big(\widetilde{\beta}(\gamma)s^{H}(t-s)^{H}+\gamma^H(t-s)^{2H}\big)\\&\le((K_H\vee1)\vee K)\Big(\widetilde{\beta}(\gamma)\big(s^{2H}+\varepsilon\big)^{\frac12}\big((t-s)^{2H}+\varepsilon\big)^{\frac12}+\gamma^H\big((t-s)^{2H}+\varepsilon\big)\Big),
	\end{align*}
	where in the second inequality we use Lemma \ref{bocoi} and Proposition \ref{acd}. 
\end{proof}

For those $\ell\ne\ell_0$, we have 
\begin{align*}
	J_+(\alpha_\ell,x_\ell,t,s)=\frac{G}{D^{\frac{2\alpha_\ell+1}{2}}}\int_{\R^2}\big(-&\iota u_1-\iota\frac{A-B}{\sqrt{\Delta}} u_2\big)^{\alpha_\ell}_0\\\times\big(&\iota u_1+\iota \frac{C-B}{\sqrt{\Delta}} u_2\big)^{\alpha_\ell}_0e^{-\frac12u_1^2-\frac12u_2^2+\iota Gu_2x_\ell}du_1du_2
\end{align*}
and 
\begin{align*}
	\widetilde{J}_+(\alpha_\ell,x_\ell,t,s)=&\frac{G}{D^{\frac{2\alpha_\ell+1}{2}}}\int_{\R^2}\big(-\iota u_1\big)^{\alpha_\ell}_0\times\big(\iota u_1\big)^{\alpha_\ell}_0e^{-\frac12u_1^2-\frac12u_2^2+\iota Gu_2x_\ell}du_1du_2\\=&\frac{G}{D^{\frac{2\alpha_\ell+1}{2}}}e^{-\frac{G^2x_{\ell}^2}{2}}\int_{\R^2}|u_1|^{2\alpha_\ell}_0e^{-\frac12u_1^2-\frac12u_2^2}du_1du_2.
\end{align*}
Using Lemma \ref{diff}, we get 
\begin{align*}
	&\Big|\big(-\iota u_1-\iota\frac{A-B}{\sqrt{\Delta}} u_2\big)^{\alpha_\ell}_0\times\big(\iota u_1+\iota \frac{C-B}{\sqrt{\Delta}} u_2\big)^{\alpha_\ell}_0-\big(-\iota u_1\big)^{\alpha_\ell}_0\times\big(\iota u_1\big)^{\alpha_\ell}_0\Big|\\\le&\Big|\big(-\iota u_1-\iota\frac{A-B}{\sqrt{\Delta}} u_2\big)^{\alpha_\ell}_0-\big(-\iota u_1\big)^{\alpha_\ell}_0\Big|\cdot\Big|\big(\iota u_1+\iota \frac{C-B}{\sqrt{\Delta}} u_2\big)^{\alpha_\ell}_0\Big|\\+&\Big|\big(\iota u_1+\iota \frac{C-B}{\sqrt{\Delta}} u_2\big)^{\alpha_\ell}_0-\big(\iota u_1\big)^{\alpha_\ell}_0\Big|\cdot\big|\big(-\iota u_1\big)^{\alpha_\ell}_0\big|\\\le &c\Big(|u_1|^{\alpha_\ell-\alpha_\ell^*}\Big|\frac{A-B}{\sqrt{\Delta}} u_2\Big|^{\alpha_\ell^*}+\Big|\frac{A-B}{\sqrt{\Delta}} u_2\Big|^{\alpha_\ell}\Big)\Big(|u_1|^{\alpha_\ell}+\big|\frac{C-B}{\sqrt{\Delta}}u_2\big|^{\alpha_\ell}\Big)\\+&c\Big(|u_1|^{\alpha_\ell-\alpha_\ell^*}\Big|\frac{C-B}{\sqrt{\Delta}} u_2\Big|^{\alpha_\ell^*}+\Big|\frac{C-B}{\sqrt{\Delta}} u_2\Big|^{\alpha_\ell}\Big)|u_1|^{\alpha_\ell},
\end{align*}
so that 
\begin{align*}
	&|J_+(\alpha_{\ell_0},x_{\ell_0},t,s)-\widetilde{J}_+(\alpha_{\ell_0},x_{\ell_0},t,s)|\\\le&c\Bigg(\int_{\R^2}\Big(\big|\frac{A-B}{\sqrt{\Delta}}\big|^{\alpha_{\ell_0}^*}+\big|\frac{C-B}{\sqrt{\Delta}}\big|^{\alpha_{\ell_0}^*}\Big)|u_2|^{\alpha_{\ell_0}^*}\big|u_1\big|^{2\alpha_{\ell_0}-\alpha^*_{\ell_0}}e^{-\frac12u_1^2-\frac12u_2^2}du_1du_2
	\\&\quad+\int_{\R^2}\Big(\big|\frac{A-B}{\sqrt{\Delta}}\big|^{\alpha_{\ell_0}}+\big|\frac{C-B}{\sqrt{\Delta}}\big|^{\alpha_{\ell_0}}\Big)|u_2|^{\alpha_{\ell_0}}\big|u_1\big|^{\alpha_{\ell_0}}e^{-\frac12u_1^2-\frac12u_2^2}du_1du_2
	\\&\quad+\int_{\R^2}\big|\frac{A-B}{\sqrt{\Delta}}\big|^{\alpha_{\ell_0}^*}\big|\frac{C-B}{\sqrt{\Delta}}\big|^{\alpha_{\ell_0}}|u_2|^{\alpha_{\ell_0}+\alpha_{\ell_0}^*}\big|u_1\big|^{\alpha_{\ell_0}-\alpha_{\ell_0}^*}e^{-\frac12u_1^2-\frac12u_2^2}du_1du_2
	\\&\quad+\int_{\R^2}\big|\frac{A-B}{\sqrt{\Delta}}\big|^{\alpha_{\ell_0}}\big|\frac{C-B}{\sqrt{\Delta}}\big|^{\alpha_{\ell_0}}|u_2|^{2\alpha_{\ell_0}}e^{-\frac12u_1^2-\frac12u_2^2}du_1du_2\Bigg)\times\frac{G}{D^{\frac{2\alpha_{\ell_0}+1}{2}}}
	\\\le&c\Big(\Big|\frac{M}{\sqrt{\Delta}}\Big|^{\alpha_{\ell_0}^*}+\Big|\frac{M}{\sqrt{\Delta}}\Big|^{\alpha_{\ell_0}}+\Big|\frac{M}{\sqrt{\Delta}}\Big|^{\alpha_{\ell_0}+\alpha_{\ell_0}^*}+\Big|\frac{M}{\sqrt{\Delta}}\Big|^{2\alpha_{\ell_0}}\Big)\times\frac{G}{D^{\frac{2\alpha_{\ell_0}+1}{2}}},
\end{align*}
where letting $\zeta_\ell^{(1)}=\alpha^*_\ell,\zeta_\ell^{(2)}=\alpha_\ell,\zeta_\ell^{(3)}=\alpha_\ell+\alpha_\ell^*,\zeta_\ell^{(4)}=2\alpha_\ell$ and using Propositions \ref{acd}, \ref{dgc-b} and \ref{a-b&c-b}., we get 
\begin{equation}\label{difff}
	\begin{split}
		|J_+(\alpha_{\ell},x_{\ell},t,s)-\widetilde{J}_+(\alpha_{\ell},x_{\ell},t,s)|	&\le c\bigg(\beta_0(\gamma)\big((t-s)^{2H}+\varepsilon\big)^{-\frac{2\alpha_\ell+1}{2}}\big(s^{2H}+\varepsilon\big)^{-\frac{1}{2}}\\&+ \gamma^{2H\alpha_\ell}\sum_{k=1}^{4}\big((t-s)^{2H}+\varepsilon\big)^{-\frac{2\alpha_\ell-\zeta^{(k)}_\ell+1}{2}}\big(s^{2H}+\varepsilon\big)^{-\frac{1+\zeta^{(k)}_\ell}{2}}\bigg)
	\end{split}
\end{equation}
and 
\begin{equation}\label{difff''}
	\begin{split}
		|\widetilde{J}_+(\alpha_{\ell},x_{\ell},t,s)|	\le&c\big((t-s)^{2H}+\varepsilon\big)^{-\frac{2\alpha_{\ell_0}+1}{2}}\big(s^{2H}+\varepsilon\big)^{-\frac{1}{2}}
	\end{split}
\end{equation}
for some non-negative decreasing function ${\beta}_0(\gamma):(1,\infty)\to\mathbb{R}$ with $\lim\limits_{\gamma\to\infty}{\beta}_0(\gamma)=0$ and any $0<s<t$ and $\gamma>1$.

For $\ell_0$, because $\alpha_{\ell_0}$ is integer, using Lemma \ref{fouriert}, we get 
\begin{align*}
	J_+(\alpha_{\ell_0},x_{\ell_0},t,s)=\frac{G}{D^{\frac{2\alpha_{\ell_0}+1}{2}}}e^{-\frac{G^2x_{\ell_0}^2}{2}}\int_{\R^2}\big(-&\iota u_1+\iota\frac{A-B}{\sqrt{\Delta}} (u_2+\iota x_{\ell_0} G)\big)^{\alpha_{\ell_0}}\\\times\big(&\iota u_1+\iota \frac{C-B}{\sqrt{\Delta}} (u_2+\iota x_{\ell_0} G)\big)^{\alpha_{\ell_0}}e^{-\frac12u_1^2-\frac12u_2^2}du_1du_2
\end{align*}
and
\begin{align*}
	\widetilde{J}_+(\alpha_{\ell_0},x_{\ell_0},t,s)=\frac{G}{D^{\frac{2\alpha_{\ell_0}+1}{2}}}e^{-\frac{G^2x_{\ell_0}^2}{2}}\int_{\R^2}|u_1|^{2\alpha_{\ell_0}}e^{-\frac12u_1^2-\frac12u_2^2}du_1du_2.
\end{align*}
Using Lemma \ref{diff}, 
\begin{align*}
		&|J_+(\alpha_{\ell_0},x_{\ell_0},t,s)-\widetilde{J}_+(\alpha_{\ell_0},x_{\ell_0},t,s)|\\\le&c\Bigg(\int_{\R^2}\Big(\big|\frac{A-B}{\sqrt{\Delta}}\big|^{\alpha_{\ell_0}^*}+\big|\frac{C-B}{\sqrt{\Delta}}\big|^{\alpha_{\ell_0}^*}\Big)\big(|u_2|+|x_{{\ell_0}}||G|\big)^{\alpha_{\ell_0}^*}\big|u_1\big|^{2\alpha_{\ell_0}-\alpha^*_{\ell_0}}e^{-\frac12u_1^2-\frac12u_2^2}du_1du_2
	\\&\quad+\int_{\R^2}\Big(\big|\frac{A-B}{\sqrt{\Delta}}\big|^{\alpha_{\ell_0}}+\big|\frac{C-B}{\sqrt{\Delta}}\big|^{\alpha_{\ell_0}}\Big)\big(|u_2|+|x_{\ell_0}||G|\big)^{\alpha_{\ell_0}}\big|u_1\big|^{\alpha_{\ell_0}}e^{-\frac12u_1^2-\frac12u_2^2}du_1du_2
	\\&\quad+\int_{\R^2}\big|\frac{A-B}{\sqrt{\Delta}}\big|^{\alpha_{\ell_0}^*}\big|\frac{C-B}{\sqrt{\Delta}}\big|^{\alpha_{\ell_0}}\big(|u_2|+|x_{\ell_0}||G|\big)^{\alpha_{\ell_0}+\alpha_{\ell_0}^*}\big|u_1\big|^{\alpha_{\ell_0}-\alpha_{\ell_0}^*}e^{-\frac12u_1^2-\frac12u_2^2}du_1du_2
	\\&\quad+\int_{\R^2}\big|\frac{A-B}{\sqrt{\Delta}}\big|^{\alpha_{\ell_0}}\big|\frac{C-B}{\sqrt{\Delta}}\big|^{\alpha_{\ell_0}}\big(|u_2|+|x_{{\ell_0}}||G|\big)^{2\alpha_{\ell_0}}e^{-\frac12u_1^2-\frac12u_2^2}du_1du_2\Bigg)\times\frac{G}{D^{\frac{2\alpha_{\ell_0}+1}{2}}}e^{-\frac{G^2x_{\ell_0}^2}{2}}\\\le&c\Big(\Big|\frac{M}{\sqrt{\Delta}}\Big|^{\alpha_{\ell_0}^*}+\Big|\frac{M}{\sqrt{\Delta}}\Big|^{\alpha_{\ell_0}}+\Big|\frac{M}{\sqrt{\Delta}}\Big|^{\alpha_{\ell_0}+\alpha_{\ell_0}^*}+\Big|\frac{M}{\sqrt{\Delta}}\Big|^{2\alpha_{\ell_0}}\Big)\times\big(1+|x_{{\ell_0}}||G|\big)^{2\alpha_{\ell_0}}\times\frac{G}{D^{\frac{2\alpha_{\ell_0}+1}{2}}}e^{-\frac{G^2x_{\ell_0}^2}{2}},
\end{align*}
where using Proposition \ref{acd}, Proposition \ref{dgc-b}, Proposition \ref{a-b&c-b} and the fact that $0<\omega^{-\alpha}e^{-\omega^{-1}}\le\alpha^\alpha e^{-\alpha}$ for any $\alpha,\omega>0$, we get 
\begin{equation}\label{diffff}
	\begin{split}
			&|J_+(\alpha_{\ell_0},x_{\ell_0},t,s)-\widetilde{J}_+(\alpha_{\ell_0},x_{\ell_0},t,s)|	\\\le& c\bigg(\beta_0(\gamma)\big((t-s)^{2H}+\varepsilon\big)^{-\frac{2\alpha_\ell+1}{2}}+ \gamma^{2H\alpha_\ell}\sum_{k=1}^{4}\big((t-s)^{2H}+\varepsilon\big)^{-\frac{2\alpha_\ell-\zeta^{(k)}_\ell+1}{2}}\bigg)e^{-\frac{G^2x_{\ell_0}^2}{4}}\\\le&c\bigg(\beta_0(\gamma)\big((t-s)^{2H}+\varepsilon\big)^{-\frac{2\alpha_\ell+1}{2}}+ \gamma^{2H\alpha_\ell}\sum_{k=1}^{4}\big((t-s)^{2H}+\varepsilon\big)^{-\frac{2\alpha_\ell-\zeta^{(k)}_\ell+1}{2}}\bigg)\\&\qquad\qquad\qquad\qquad\qquad\qquad\qquad\qquad\times\exp\Big(-\frac{x_{\ell_0}^2}{4}\frac{\kappa_H\wedge2}{(K_H\vee2)^2}\cdot \big(s^{2H}+\varepsilon\big)^{-1}\Big)
	\end{split}
\end{equation}
 and 
\begin{equation}\label{diffff''}
	\begin{split}
		|\widetilde{J}_+(\alpha_{\ell_0},x_{\ell_0},t,s)|	\le&c\bigg(\beta_0(\gamma)\big((t-s)^{2H}+\varepsilon\big)^{-\frac{2\alpha_{\ell_0}+1}{2}}\bigg)\times\exp\Big(-\frac{x_{\ell_0}^2}{4}\frac{\kappa_H\wedge2}{(K_H\vee2)^2}\cdot \big(s^{2H}+\varepsilon\big)^{-1}\Big)
	\end{split}
\end{equation}
for  $\zeta_\ell^{(1)}=\alpha^*_\ell,\zeta_\ell^{(2)}=\alpha_\ell,\zeta_\ell^{(3)}=\alpha_\ell+\alpha_\ell^*,\zeta_\ell^{(4)}=2\alpha_\ell$, some non-negative decreasing function ${\beta}_0(\gamma):(1,\infty)\to\mathbb{R}$ with $\lim\limits_{\gamma\to\infty}{\beta}_0(\gamma)=0$ and any $0<s<t$ and $\gamma>1$. 

Then like the arguments in \eqref{thm211}, we get
\begin{align}\label{thm231}
		&\big|\E\big[L^{(\boldsymbol{\alpha})}_{+,\varepsilon}(T,x)\big]^2-\widetilde{I}_{+,\varepsilon}^{(\boldsymbol{\alpha})}(x)\big|\nonumber\\\le&c\int_{[0,T]^2_<}\Big|\prod_{\ell=1}^{d}J_+(\alpha_\ell,x_\ell,t,s)-\prod_{\ell=1}^{d}\widetilde{J}_+(\alpha_\ell,x_\ell,t,s)\Big|dtds\nonumber\\\le&c\int_{[0,T]^2_<}\Big|J_+(\alpha_{\ell_0},x_{\ell_0},t,s)-\widetilde{J}_+(\alpha_{\ell_0},x_{\ell_0},t,s)\Big|\cdot\Big|\prod_{\ell=1:\ell\ne\ell_0}^{d}J_+(\alpha_\ell,x_\ell,t,s)\Big|dtds\nonumber\\+&c\int_{[0,T]^2_<}\Big|\widetilde{J}_+(\alpha_{\ell_0},x_{\ell_0},t,s)\Big|\cdot\Big|\prod_{\ell=1:\ell\ne\ell_0}^{d}J_+(\alpha_\ell,x_\ell,t,s)-\prod_{\ell=1:\ell\ne\ell_0}^{d}\widetilde{J}_+(\alpha_\ell,x_\ell,t,s)\Big|dtds\nonumber\\\le&\left\{\begin{array}{cl}
			c\varepsilon^{\frac{1}{2H}-|\boldsymbol{\alpha}|-\frac{d}{2}+\beta}+c[\beta_0(\gamma)]^{d}\varepsilon^{\frac{1}{2H}-|\boldsymbol{\alpha}|-\frac{d}{2}},&H(2|\boldsymbol{\alpha}|+d)>1,\\c+c[\beta_0(\gamma)]^{d}\ln(1+\varepsilon^{-\frac12}), &H(2|\boldsymbol{\alpha}|+d)=1,
		\end{array}\right.
\end{align}
for any $\varepsilon\in(0,\frac12)$ and $\gamma>1$, where we use \eqref{difff}, \eqref{difff''}, \eqref{diffff} and \eqref{diffff''} in the last inequality and 
\[\beta=\frac{1}{4}\Big\{\min\{\alpha_\ell^{*}:\alpha_\ell^{*}>0,\ell=1,\cdots,d\}\wedge\big(\frac{1}{H}-d\big)\wedge\big(2|\boldsymbol{\alpha}|+d-\frac{1}{H}\big)\Big\}.\]

Moreover, using Proposition \ref{acd}, Proposition \ref{dgc-b} and coordinate transform $u=t-s,v=s$,
\begin{align}\label{thm222}
	\widetilde{I}_{+,\varepsilon}^{(\boldsymbol{\alpha})}(x)&=\frac{2}{(2\pi)^{2d}}\int_{[0,T]^2_<}\prod_{\ell=1}^{d}\widetilde{J}_{+}(\alpha_\ell,x_\ell,t,s)dtds\nonumber\\&\ge c\int_{[0,T]^2_<}\frac{G^d}{D^{\frac{2|\boldsymbol{\alpha}|+d}{2}}}e^{-\frac{G^2|x|^2}{2}}dtds\nonumber\\&\ge c\int_{\frac{T}{4}}^{\frac{T}{2}}\big(u^{2H}+\varepsilon\big)^{-\frac{d}{2}}\exp\Big(-\frac{\kappa_H\wedge2}{2(K_H\vee2)^2}\cdot\frac{|x|^2}{u^{2H}+\varepsilon}\Big)du\times\int_{0}^{\frac{T}{2}}\big(v^{2H}+\varepsilon\big)^{-\frac{2|\boldsymbol{\alpha}|+d}{2}}dv\nonumber\\&\ge \left\{\begin{array}{cl}
		c\varepsilon^{\frac{1}{2H}-|\boldsymbol{\alpha}|-\frac{d}{2}},&H(2|\boldsymbol{\alpha}|+d)>1,\\c\ln(1+\varepsilon^{-\frac12}),&H(2|\boldsymbol{\alpha}|+d)=1,
	\end{array}\right.
\end{align}
for any $\varepsilon\in(0,\frac12)$. So combining \eqref{thm231} with \eqref{thm222}, we get 
\begin{align*}
	\liminf_{\varepsilon\downarrow0}\E\big[L^{(\boldsymbol{\alpha})}_{+,\varepsilon}(T,x)\big]^2=+\infty,
\end{align*}
which completes the proof.

\section{Technical Lemmas}
In this section we give some lemmas based on contour integration, which will be used in this article.
\begin{lemma}\label{Ff}
	For any $0<\alpha<1$, $u\in\mathbb{R}$ and $\mathrm{sgn}(u)=\left\{\begin{array}{cl}
		1,&u>0;\\0,&u=0;\\-1,&u<0. 
	\end{array}\right.$, we have $$\frac{\alpha}{\Gamma(1-\alpha)}\int_0^{+\infty}\frac{1}{v^{1+\alpha}}(1-e^{-\iota uv})dv=|{u}|^\alpha{e}^{\iota{\frac{\pi\alpha}{2}\mathrm{sgn}(u)}}.$$
\end{lemma}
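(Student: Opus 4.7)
The plan is to evaluate the integral by rotating the contour from the positive real axis onto the negative imaginary axis, which converts the oscillatory factor $e^{-\iota uv}$ into the exponentially decaying factor $e^{-|u|v}$.

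First I would reduce to the case $u>0$. The statement is trivial at $u=0$. For $u<0$, writing $-u>0$ and conjugating term by term gives
$$\int_0^\infty\frac{1-e^{-\iota uv}}{v^{1+\alpha}}\,dv=\overline{\int_0^\infty\frac{1-e^{-\iota(-u)v}}{v^{1+\alpha}}\,dv},$$
so the $u<0$ identity follows from the $u>0$ identity by complex conjugation. For $u>0$, the substitution $w=uv$ reduces the claim to the single identity
$$\int_0^\infty\frac{1-e^{-\iota w}}{w^{1+\alpha}}\,dw=\frac{\Gamma(1-\alpha)}{\alpha}\,e^{\iota\pi\alpha/2}.$$

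Next I would apply Cauchy's theorem to $f(z)=(1-e^{-\iota z})\,z^{-1-\alpha}$ on the boundary of the quarter annulus
$$\{z\in\C:\epsilon<|z|<R,\ \operatorname{Re} z>0,\ \operatorname{Im} z<0\},$$
choosing the principal branch of $z^{-1-\alpha}$ (branch cut on the negative real axis), so that $f$ is holomorphic inside this region. The contour consists of (i) the segment $[\epsilon,R]$ on the positive real axis, (ii) the large quarter arc from $R$ to $-\iota R$, (iii) the segment from $-\iota R$ to $-\iota\epsilon$ on the negative imaginary axis, and (iv) the small quarter arc from $-\iota\epsilon$ back to $\epsilon$. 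On the large arc $z=Re^{\iota\theta}$ with $\theta\in[-\pi/2,0]$, the bound $|e^{-\iota z}|=e^{R\sin\theta}\le 1$ gives $|f(z)|\le 2R^{-1-\alpha}$, so this piece is $O(R^{-\alpha})\to0$; on the small arc $|1-e^{-\iota z}|\le C|z|$ gives $O(\epsilon^{1-\alpha})\to0$ since $\alpha<1$. Parametrizing the imaginary segment by $z=-\iota s$, $s\in[\epsilon,R]$, and noting $(-\iota)^{1+\alpha}=e^{-\iota\pi(1+\alpha)/2}$, Cauchy's theorem yields in the limit
$$\int_0^\infty\frac{1-e^{-\iota t}}{t^{1+\alpha}}\,dt=-\iota\,e^{\iota\pi(1+\alpha)/2}\int_0^\infty\frac{1-e^{-s}}{s^{1+\alpha}}\,ds=e^{\iota\pi\alpha/2}\int_0^\infty\frac{1-e^{-s}}{s^{1+\alpha}}\,ds.$$

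Finally, I would evaluate the real integral by integration by parts:
$$\int_0^\infty\frac{1-e^{-s}}{s^{1+\alpha}}\,ds=\Bigl[-\frac{1-e^{-s}}{\alpha s^{\alpha}}\Bigr]_0^\infty+\frac{1}{\alpha}\int_0^\infty\frac{e^{-s}}{s^{\alpha}}\,ds=\frac{\Gamma(1-\alpha)}{\alpha},$$
where the boundary terms vanish because $1-e^{-s}\sim s$ as $s\downarrow0$ and $0<\alpha<1$. Combining the pieces and undoing the scaling gives the desired identity. The only genuinely delicate point is justifying that $f$ admits a holomorphic branch throughout the closed fourth quadrant (so that Cauchy's theorem applies) and that the arc estimates hold uniformly; the rest is bookkeeping.
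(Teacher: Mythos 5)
Your proof is correct and follows essentially the same route as the paper: both reduce (by scaling and a symmetry in the sign of $u$) to the normalized identity and then rotate the contour through a quarter turn around a quarter-annulus, with vanishing arc estimates, to identify $\int_0^\infty(1-e^{-\iota t})t^{-1-\alpha}\,dt$ with $e^{\iota\pi\alpha/2}\int_0^\infty(1-e^{-s})s^{-1-\alpha}\,ds=e^{\iota\pi\alpha/2}\Gamma(1-\alpha)/\alpha$. Your fourth-quadrant contour for $z^{-1-\alpha}(1-e^{-\iota z})$ is just the paper's first-quadrant contour for $(z)_0^{-\alpha-1}(1-e^{-z})$ under the substitution $z\mapsto\iota z$, so the arguments coincide.
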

\begin{proof}
	Given $0<\alpha<1$ and $u\in\R$, we have 
	\begin{align*}
			&\frac{\alpha}{\Gamma(1-\alpha)}\int_0^{+\infty}\frac{1}{v^{1+\alpha}}(1-e^{-\iota uv})dv\\=&\frac{\alpha}{\Gamma(1-\alpha)}\Big(\int_0^{+\infty}\frac{1-\cos(uv)}{v^{1+\alpha}}dv+\iota\int_0^{+\infty}\frac{\sin(uv)}{v^{1+\alpha}}dv\Big)\\=&\frac{\alpha}{\Gamma(1-\alpha)}\Big(|u|^\alpha\int_0^{+\infty}\frac{1-\cos(v)}{v^{1+\alpha}}dv+\iota{u}|u|^{\alpha-1}\int_0^{+\infty}\frac{\sin(v)}{v^{1+\alpha}}dv\Big)\\=&\frac{\alpha}{\Gamma(1-\alpha)}\Big(|u|^\alpha\mathrm{Re}\int_0^{+\infty}\frac{1}{v^{1+\alpha}}(1-e^{-\iota v})dv+\iota{u}|u|^{\alpha-1}\mathrm{Im}\int_0^{+\infty}\frac{1}{v^{1+\alpha}}(1-e^{-\iota v})dv\Big),
	\end{align*}	
	where $\mathrm{Re}\,z$ and $\mathrm{Im}\, z$ are the real and imaginary part of a complex number $z$, respectively. So we only need to verify that 
	\[\int_0^{+\infty}\frac{1}{v^{1+\alpha}}(1-e^{-\iota v})dv=\frac{e^{\iota\frac{\pi\alpha}{2}}\Gamma(1-\alpha)}{\alpha}.\]
	To solve the problem, we introduce complex function $(\cdot)_0^{p}$ defined by 
	$$(z)_0^{p}=|z|^{p}e^{\iota{p}\arg{(z)}}.$$
	It is widely known that when $p$ is a non-negative integer $(z)_0^{p}=z^p$ is analytic on $\C$ and in other cases $(z)_0^{p}$ is analytic on $\mathbb{C}\backslash\{{y}:y\le0\}$.
\begin{figure}[H]
	\centering
	\begin{tikzpicture}[scale=0.7,thick]
		\draw[->](0,0)node[below left]{$O$}--(0.5,0)node[below]{$r$}--(3.2,0)node[below]{$R$}--(4.2,0)node[below]{$x$};
		\draw[->](0,0)--(0,0.5)node[left]{$r$}--(0,3.2)node[left]{$R$}--(0,4.2)node[right]{$y$};
		\fill(0.5,0)circle(1pt);
		\fill(0,0.5)circle(1pt);
		\fill(3.2,0)circle(1pt);
		\fill(0,3.2)circle(1pt);
		\draw(0.5,0)arc(0:90:0.5);
		\draw[->](0.5,0)arc(0:45:0.5)node[right]{$\gamma_{r}$};
		\draw(3.2,0)arc(0:90:3.2);
		\draw[-<](3.2,0)arc(0:45:3.2)node[right]{$\gamma_R$};
		\draw[->](3.2,0)--(1.6,0)node[below]{$\gamma_{1}$};
		\draw[->](0,0)--(0,1.6)node[right]{$\gamma_{2}$};
	\end{tikzpicture}
	\caption{The contour of $(z)_0^{-\alpha-1}(1-e^{-z})$}
	\label{contour1}
\end{figure}
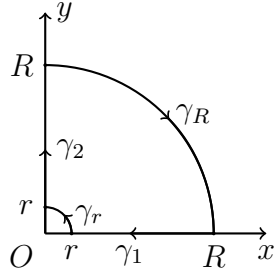 
	Now assume $0<r<R$, by Cauchy-Goursat Theorem for analytic function we have \[\int_{\gamma_{1}+\gamma_r+\gamma_{2}+\gamma_R}(z)_0^{-\alpha-1}(1-e^{-z})dz=0,\]
	where the four curves $\gamma_{1},\gamma_r,\gamma_{2},\gamma_R$(see Figure \ref{contour1}) are  
	\begin{align*}
		\gamma_1:\gamma_1(t)&=-t,t\in[-R,-r];\qquad\ \ \ 
		\gamma_{2}:\gamma_{2}(t)=\iota{t},t\in[r,R];\\
		\gamma_r:\gamma_r(t)&=re^{\iota{t}},t\in[0,\frac{\pi}{2}];\qquad\qquad
		\gamma_R:\gamma_R(t)=Re^{-\iota{t}},t\in[-\frac{\pi}{2},0],
	\end{align*}  
Here we have 	\[\lim\limits_{r\to0}\int_{\gamma_r}(z)_0^{-\alpha-1}(1-e^{-z})dz=\lim\limits_{R\to+\infty}\int_{\gamma_R}(z)_0^{-\alpha-1}(1-e^{-z})dz=0\]
	by the fact 
	\begin{equation*}
		\begin{split}
			\Big|\int_{\gamma_R}(z)_0^{-\alpha-1}(1-e^{-z})dz\Big|\le\int_{0}^{\frac{\pi}{2}}R^{-\alpha}|1-e^{-Re^{\iota{t}}}|dt\le\int_{0}^{\frac{\pi}{2}}R^{-\alpha}(1+e^{-R\cos(t)})dt\le{\pi}R^{-\alpha}
		\end{split}
	\end{equation*}
	and 
	\begin{equation*}
		\begin{split}
			\Big|\int_{\gamma_r}(z)_0^{-\alpha-1}(1-e^{-z})dz\Big|\le\int_{0}^{\frac{\pi}{2}}r^{-\alpha}|1-e^{-re^{\iota{t}}}|dt=r^{1-\alpha}\int_{0}^{\frac{\pi}{2}}\Big|\frac{1-e^{-re^{\iota{t}}}}{re^{\iota{t}}}\Big|dt\le\frac{\pi}{2}{r^{1-\alpha}}e^{r}.
		\end{split}
	\end{equation*}
	So that we have 
	\begin{equation*}
		\begin{split}
			\int_0^{+\infty}\frac{1}{v^{1+\alpha}}(1-e^{-\iota v})dv&=e^{\iota\frac{\pi\alpha}{2}}\lim\limits_{r\to0,R\to+\infty}\int_{\gamma_2}(z)_0^{-\alpha-1}(1-e^{-z})dz\\&=-e^{\iota\frac{\pi\alpha}{2}}\lim\limits_{r\to0,R\to+\infty}\int_{\gamma_1}(z)_0^{-\alpha-1}(1-e^{-z})dz\\&=e^{\iota\frac{\pi\alpha}{2}}\int_0^{+\infty}\frac{1}{v^{1+\alpha}}(1-e^{-v})dv=e^{\iota\frac{\pi\alpha}{2}}\frac{\Gamma(1-\alpha)}{\alpha},
		\end{split}
	\end{equation*}
	which completes the proof.
\end{proof}
\begin{lemma}\label{diff}
	For any $\alpha\ge0$ and complex function $(\iota\cdot)_0^{\alpha}$ defined in \eqref{alphapower}, there exists a constant $c_{\alpha,2}>0$, s.t. for any $y_1,y_2\in\mathbb{R}$, 
	\begin{equation}\label{mb}
		\big|(\iota y_1+\iota y_2)_0^{\alpha}-(\iota y_2)_0^{\alpha}\big|\le {c}_{\alpha,2}\big(|y_2|^{\bar{\alpha}-(\bar{\alpha}\wedge1)\mathbf{1}_{\{\widetilde{\alpha}=0\}}}|y_1|^{\widetilde{\alpha}+(\bar{\alpha}\wedge1)\mathbf{1}_{\{\widetilde{\alpha}=0\}}}+|y_1|^\alpha\big), 
	\end{equation} where $\bar{\alpha}$ is the largest integer no greater than $\alpha$ and $\widetilde{\alpha}=\alpha-\bar{\alpha}$ and $\bar{\alpha}\wedge1=\min\{\bar{\alpha},1\}$. Moreover, when $\alpha$ is integer, we get for some $c_{\alpha,3}>0$, 
	\begin{equation}\label{mb'}
		\big|(z_1+z_2)^{\alpha}-z_2^{\alpha}\big|\le {c}_{\alpha,3}\big(|z_2|^{{\alpha}-{\alpha}\wedge1}|z_1|^{{\alpha}\wedge1}+|z_1|^\alpha\big)
	\end{equation}
	for any $z_1,z_2\in\C$.
\end{lemma}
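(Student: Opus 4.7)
The plan is to dispose of the integer-exponent statement \eqref{mb'} first and then use it, together with case analysis on the signs of $y_1$ and $y_2$, to obtain \eqref{mb}. For \eqref{mb'}, the binomial theorem gives
\[(z_1+z_2)^{\alpha} - z_2^{\alpha} = \sum_{k=1}^{\alpha}\binom{\alpha}{k} z_1^{k} z_2^{\alpha-k},\]
so it suffices to dominate each $|z_1|^{k}|z_2|^{\alpha-k}$ with $k\in\{1,\ldots,\alpha\}$ by the right-hand side of \eqref{mb'}. Splitting on whether $|z_1|\le|z_2|$ or $|z_1|>|z_2|$, in the first range $|z_1|^{k}|z_2|^{\alpha-k}=|z_1|\,|z_1|^{k-1}|z_2|^{\alpha-k}\le|z_1||z_2|^{\alpha-1}$, while in the second $|z_1|^{k}|z_2|^{\alpha-k}\le|z_1|^{\alpha}$. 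Summing over $k$ yields \eqref{mb'}. Specialising $z_j=\iota y_j$ already covers \eqref{mb} whenever $\alpha$ is a non-negative integer, since in that case $\tilde{\alpha}=0$, $\bar{\alpha}\wedge1=\alpha\wedge1$, and $(\iota y)_0^{\alpha}=(\iota y)^{\alpha}$.

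It remains to prove \eqref{mb} when $\alpha$ is non-integer, so $\bar{\alpha}\ge 0$ is an integer, $\tilde{\alpha}\in(0,1)$, and $\mathbf{1}_{\{\tilde{\alpha}=0\}}=0$; the target inequality becomes
\[\bigl|(\iota y_1+\iota y_2)_0^{\alpha}-(\iota y_2)_0^{\alpha}\bigr|\le c_{\alpha,2}\bigl(|y_2|^{\bar{\alpha}}|y_1|^{\tilde{\alpha}}+|y_1|^{\alpha}\bigr).\]
From the definition $(\iota y)_0^{\alpha}=|y|^{\alpha}e^{\iota\frac{\pi\alpha}{2}\mathrm{sgn}(y)}$ one checks directly the multiplicativity $(\iota y)_0^{\alpha_1+\alpha_2}=(\iota y)_0^{\alpha_1}(\iota y)_0^{\alpha_2}$ for $y\ne0$. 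I would then split into two regimes according to the sign configuration of $(y_1,y_2)$.

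In the \emph{aligned} regime, where $y_2(y_1+y_2)\ge 0$ (same sign, including one being zero), both $(\iota y_2)_0^{\alpha}$ and $(\iota(y_1+y_2))_0^{\alpha}$ carry the common phase $e^{\iota\frac{\pi\alpha}{2}\mathrm{sgn}(y_2)}$, so the modulus of the difference reduces to the real quantity $\bigl||y_1+y_2|^{\alpha}-|y_2|^{\alpha}\bigr|$. If $|y_1|\le|y_2|$ (hence in particular $|y_1|/|y_2|\le1$), a mean value bound gives $\bigl||y_1+y_2|^{\alpha}-|y_2|^{\alpha}\bigr|\le c\,|y_2|^{\alpha-1}|y_1|$, and Young's-inequality-style interpolation $|y_2|^{\alpha-1}|y_1|=|y_2|^{\bar{\alpha}}(|y_1|/|y_2|)^{1-\tilde{\alpha}}|y_1|^{\tilde{\alpha}}\le|y_2|^{\bar{\alpha}}|y_1|^{\tilde{\alpha}}$ absorbs the excess (for $\alpha\in(0,1)$ one uses concavity of $x\mapsto x^{\alpha}$ instead, producing $|y_1|^{\alpha}$ directly). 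If $|y_1|>|y_2|$, then both $|y_1+y_2|^{\alpha}$ and $|y_2|^{\alpha}$ are dominated by a constant times $|y_1|^{\alpha}$ and the triangle inequality suffices.

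In the \emph{opposite} regime, where $y_2$ and $y_1+y_2$ have opposite signs (so $|y_1|>|y_2|$ automatically), one has $|y_1+y_2|\le|y_1|$ and $|y_2|\le|y_1|$, whence
\[\bigl|(\iota(y_1+y_2))_0^{\alpha}-(\iota y_2)_0^{\alpha}\bigr|\le|y_1+y_2|^{\alpha}+|y_2|^{\alpha}\le 2|y_1|^{\alpha},\]
which fits into the second summand of the target bound. The main delicacy — and the part that forces the case split — is that the piecewise phase $e^{\iota\frac{\pi\alpha}{2}\mathrm{sgn}(\cdot)}$ is discontinuous at the origin when $\alpha$ is non-integer, so a naive complex mean value argument across the imaginary axis through $0$ is unavailable; handling the sign flip separately, as above, circumvents this obstacle cleanly.
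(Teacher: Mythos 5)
Your proof is correct, and in the fractional case it takes a genuinely different route from the paper's. The binomial argument for \eqref{mb'} and the observation that the integer case of \eqref{mb} is just its specialisation $z_j=\iota y_j$ (since $(\iota y)_0^{\alpha}=(\iota y)^{\alpha}=\iota^{\alpha}y^{\alpha}$ for integer $\alpha$) match the paper in substance, which runs a real mean value argument there instead. For non-integer $\alpha$, however, the paper proceeds analytically: for $0<\alpha<1$ it represents $(\iota u)_0^{\alpha}$ by the Marchaud-type integral of Lemma \ref{Ff} (itself obtained by contour integration) and bounds $|1-e^{-\iota y_1 v}|$ to get $c|y_1|^{\alpha}$, and for non-integer $\alpha>1$ it uses the multiplicative splitting $(\iota y)_0^{\alpha}=(\iota y)_0^{\bar{\alpha}}(\iota y)_0^{\widetilde{\alpha}}$ together with a product-difference decomposition that recombines the two previous cases. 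You work directly from the definition \eqref{alphapower} by splitting on the sign pattern: when $y_2$ and $y_1+y_2$ are aligned the two phases coincide, so the difference reduces to $\bigl||y_1+y_2|^{\alpha}-|y_2|^{\alpha}\bigr|$, which you control by the mean value theorem (concavity when $\alpha<1$) plus the interpolation $|y_2|^{\alpha-1}|y_1|\le|y_2|^{\bar{\alpha}}|y_1|^{\widetilde{\alpha}}$ valid for $|y_1|\le|y_2|$, the complementary case $|y_1|>|y_2|$ being absorbed into $|y_1|^{\alpha}$; when the signs are opposite, $|y_1|\ge\max\{|y_2|,|y_1+y_2|\}$ and the triangle inequality gives $2|y_1|^{\alpha}$. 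Your route is more elementary --- it needs neither Lemma \ref{Ff} nor the integer/fractional splitting --- and it isolates the true obstruction, namely the jump of the phase $e^{\iota\frac{\pi\alpha}{2}\sgn(\cdot)}$ at the origin; just record explicitly that in the degenerate subcases $y_2=0$ or $y_1+y_2=0$ the vanishing of $(\iota\,0)_0^{\alpha}$ (for $\alpha>0$) still reduces the difference to the real one, since the ``common phase'' phrasing does not literally apply there. What the paper's approach buys is reuse of Lemma \ref{Ff}, which it needs anyway for the Fourier form of $D^{\alpha}_{\pm}$, and a bound that never inspects signs; both arguments yield the stated inequalities with comparable constants.
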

\begin{proof} 
	When $\alpha=0$, the inequality \eqref{mb} is obvious. When $0<\alpha<1$, we have $\bar{\alpha}=0$ and $\alpha=\widetilde{\alpha}$ so the inequality \eqref{mb} is from 
	\begin{equation}\label{in1}
		\begin{split}
			\big|(\iota y_1+\iota y_2)_0^{\alpha}-(\iota y_2)_0^{\alpha}\big|&=\Big|\frac{\alpha}{\Gamma(1-\alpha)}\int_0^{+\infty}\frac{1}{v^{1+\alpha}}(e^{-\iota y_2v}-e^{-\iota (y_1+y_2)v})dv\Big|\\&\le\frac{\alpha}{\Gamma(1-\alpha)}\int_0^{+\infty}\frac{1}{v^{1+\alpha}}|1-e^{-\iota y_1v}|dv\\&=\frac{\alpha2^{1-\alpha}}{\Gamma(1-\alpha)}\int_0^{+\infty}\frac{|\sin v|}{v^{1+\alpha}}dv\times|y_1|^\alpha,
		\end{split} 
	\end{equation}
	where we use Lemma \ref{Ff} in the first equality. 
	
	Now we turn to the case $\alpha\ge1$. If $\alpha$ is an integer we have $\widetilde{\alpha}=0$ and $\bar{\alpha}=\alpha$. At this time 
	\begin{equation}\label{in2}
		\begin{split}
			\big|(\iota y_1+\iota y_2)_0^{\alpha}-(\iota y_2)_0^{\alpha}\big|&=\big|(y_1+y_2)^{\alpha}-y_2^{\alpha}\big|=\big|\alpha(\theta y_1+y_2)^{\alpha-1}y_1\big|\\&\le\alpha\big(|y_1|+|y_2|\big)^{\alpha-1}|y_1|=\alpha2^{\alpha-1}(|y_2|^{\alpha-1}|y_1|+|y_1|^\alpha),
		\end{split}
	\end{equation}	
	where $\theta\in(0,1)$. If $\alpha$ is not an integer, we have $0<\widetilde{\alpha}<1$ and $\bar{\alpha}\ge1$ so that for some $c>0$ and $c'>0$, there exists 
	\begin{equation}
		\begin{split}
			&\big|(\iota y_1+\iota y_2)_0^{\alpha}-(\iota y_2)_0^{\alpha}\big|
			\\\le&|(\iota y_1+\iota y_2)_0^{\bar{\alpha}}|\cdot\big|(\iota y_1+\iota y_2)_0^{\widetilde{\alpha}}-(\iota y_2)_0^{\widetilde{\alpha}}\big|+|(\iota y_2)_0^{\widetilde{\alpha}}|\cdot\big|(\iota y_1+\iota y_2)_0^{\bar{\alpha}}-(\iota y_2)_0^{\bar{\alpha}}\big|\\\le& c\Big(|y_1+y_2|^{\bar{\alpha}}|y_1|^{\widetilde{\alpha}}+|y_2|^{\widetilde{\alpha}}\big(|y_2|^{\bar{\alpha}-1}|y_1|+|y_1|^{\bar{\alpha}}\big)\Big)\le c'\big(|y_2|^{\bar{\alpha}}|y_1|^{\widetilde{\alpha}}+|y_1|^\alpha\big)
		\end{split}
	\end{equation}
	for all $y_1,y_2\in\R$, where in the second inequality we use \eqref{in1} and \eqref{in2}, and in the last inequality we use the fact $|y_1+y_2|^{\bar{\alpha}}\le\big(|y_1|+|y_2|\big)^{\bar{\alpha}}$, $\max\{|y_1|,|y_2|\}\le\big(|y_1|+|y_2|\big)$ and  $$\big(|y_1|+|y_2|\big)^{\bar{\alpha}}\le2^{\bar{\alpha}}\big(|y_1|^{\bar{\alpha}}+|y_2|^{\bar{\alpha}}\big),$$ 
	which completes the proof of \eqref{mb}. The proof of \eqref{mb'} comes from the decomposition 
	\[\big|(z_1+z_2)^{\alpha}-z_1^\alpha\big|=\Big|\sum_{k=1}^{\alpha}\frac{\alpha!}{(\alpha-k)!k!}z_1^kz_2^{\alpha-k}\Big|\le2^{\alpha}|z_1|(|z_1|+|z_2|)^{\alpha-1}\le2^{2\alpha-1}\big(|z_2|^{\alpha-1}|z_1|+|z_1|^\alpha\big)\]
	for any $z_1,z_2\in \C$ and integer $\alpha\ge1$.
\end{proof}
\begin{lemma}\label{fouriert}
	For any real number $a>0$, $x\in\R$ and integer $m\ge0$, there exists 
	\begin{equation*}
		\begin{split}
			\int_{-\infty}^{+\infty}t^{m}e^{-\frac{1}{2}at^2}e^{\iota x t}dt=\int_{-\infty}^{+\infty}(t+\iota \frac{x}{a})^{m}e^{-\frac{1}{2}at^2}dt\times{e^{-\frac{x^2}{2a}}}.
		\end{split}
	\end{equation*}
\end{lemma}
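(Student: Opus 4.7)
The plan is to complete the square in the exponent of the left-hand side and then shift the contour of integration by $-\iota x/a$. Writing
\[
-\tfrac{1}{2}at^{2}+\iota xt \;=\; -\tfrac{a}{2}\bigl(t-\iota x/a\bigr)^{2}-\tfrac{x^{2}}{2a},
\]
the left-hand side factors as $e^{-x^{2}/(2a)}\int_{\R}t^{m}e^{-\frac{a}{2}(t-\iota x/a)^{2}}\,dt$. Thus the lemma reduces to the contour-shift identity
\[
\int_{\R}t^{m}e^{-\frac{a}{2}(t-\iota x/a)^{2}}\,dt \;=\; \int_{\R}(t+\iota x/a)^{m}e^{-\frac{a}{2}t^{2}}\,dt,
\]
which formally is the substitution $s=t-\iota x/a$. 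This can be justified in exactly the same spirit as the proof of Lemma \ref{Ff}.

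Concretely, I would set $F(z)=(z+\iota x/a)^{m}e^{-\frac{a}{2}z^{2}}$, an entire function of $z$, and apply Cauchy--Goursat on the rectangle $\Gamma_{R}$ with vertices $-R$, $R$, $R-\iota x/a$ and $-R-\iota x/a$ (orientation chosen so that the bottom side $[-R-\iota x/a,\,R-\iota x/a]$ is traversed left-to-right when $x>0$; the case $x<0$ is symmetric and $x=0$ is trivial). Parametrising the bottom side by $z=s-\iota x/a$ with $s\in[-R,R]$ produces $\int_{-R}^{R}s^{m}e^{-\frac{a}{2}(s-\iota x/a)^{2}}\,ds$, while the top side (the real axis, traversed right-to-left) contributes $-\int_{-R}^{R}(s+\iota x/a)^{m}e^{-\frac{a}{2}s^{2}}\,ds$. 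Since $\oint_{\Gamma_{R}}F(z)\,dz=0$, letting $R\to\infty$ produces the required identity provided the two vertical sides vanish.

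The only substantive step is the vanishing of the vertical contributions. On the right vertical side, writing $z=R+\iota y$ with $y$ in an interval of length $|x|/a$ contained in $[-|x|/a,|x|/a]$, one has $|e^{-\frac{a}{2}z^{2}}|=e^{-\frac{a}{2}(R^{2}-y^{2})}\le e^{-\frac{a}{2}R^{2}+x^{2}/(2a)}$ and $|z+\iota x/a|\le R+|x|/a$, so the side integral is bounded by $c_{x,a,m}\,(R+1)^{m}e^{-aR^{2}/2}\cdot |x|/a$, which tends to $0$ as $R\to\infty$; the left vertical side is handled identically. There is no genuine obstacle here, only the bookkeeping of signs and orientations when the sign of $x$ changes, and the observation that the polynomial factor $z^{m}$ is harmlessly dominated by the Gaussian on any horizontal strip. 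Once the contour-shift identity is established, multiplying both sides by $e^{-x^{2}/(2a)}$ yields the statement of the lemma.
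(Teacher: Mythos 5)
Your proposal is correct and follows essentially the same route as the paper: both apply Cauchy--Goursat to the entire function $F(z)=(z+\iota x/a)^{m}e^{-\frac{a}{2}z^{2}}$ on a rectangle joining the real axis to the line shifted by $-\iota x/a$, and both kill the vertical sides using the Gaussian decay $|e^{-\frac{a}{2}(R+\iota y)^{2}}|=e^{-\frac{a}{2}(R^{2}-y^{2})}$, which dominates the polynomial factor on a bounded horizontal strip. The only cosmetic difference is that you complete the square up front, whereas the paper extracts the factor $e^{x^{2}/(2a)}$ directly from the parametrization of the shifted segment.
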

\begin{proof}	First, when $x=0$, the equality is obvious. Then for $x>0$ and $x<0$, we consider complex function $F(z)=(z+\iota \frac{x}{a})^me^{-\frac{1}{2}az^2}$. For any $R>0$, denote curves(see Figure \ref{contour2}) 
	\begin{equation*}
		\begin{split}
			\gamma_1:\gamma_1(t)&=-R+\iota\sgn(x)(t-\frac{|x|}{a}),t\in[0,\frac{|x|}{a}],\quad\,\gamma_{2}:\gamma_2(t)=t,t\in[-R,R],\\
			\gamma_3:\gamma_3(t)&=R+\iota\sgn(x)(t-\frac{|x|}{a}),t\in[0,\frac{|x|}{a}],\qquad\gamma_{4}:\gamma_4(t)=t-\iota\frac{x}{a},t\in[-R,R].
		\end{split}
	\end{equation*}
	\begin{figure}[H]
		\centering
		\begin{tikzpicture}[thick]
			\draw[->](0,0)--(4.2,0)node[below]{$x$};
			\draw(0.5,0)circle(1pt)node[above]{$-R$};
			\draw(3.5,0)circle(1pt)node[above]{$R$};
			\draw(0.5,0)--(0.5,-1.5);
			\draw(3.5,0)--(3.5,-1.5);
			\draw(0.5,-1.5)--(3.5,-1.5);
			\draw[->](0.5,-1.5)--(0.5,-0.75)node[right]{$\gamma_1$};
			\draw[->](3.5,-1.5)--(3.5,-0.75)node[right]{$\gamma_3$};
			\draw[->](0.5,0)--(2,0)node[below]{$\gamma_{2}$};
			\draw[->](0.5,-1.5)--(2,-1.5)node[below]{$\gamma_4$};
		\end{tikzpicture}
		\begin{tikzpicture}[thick]
			\draw[->](0,0)--(4.2,0)node[below]{$x$};
			\draw(0.5,0)circle(1pt)node[below]{$-R$};
			\draw(3.5,0)circle(1pt)node[below]{$R$};
			\draw(0.5,0)--(0.5,1.5);
			\draw(3.5,0)--(3.5,1.5);
			\draw(0.5,1.5)--(3.5,1.5);
			\draw[->](0.5,1.5)--(0.5,0.75)node[right]{$\gamma_1$};
			\draw[->](3.5,1.5)--(3.5,0.75)node[right]{$\gamma_3$};
			\draw[->](0.5,0)--(2,0)node[below]{$\gamma_2$};
			\draw[->](0.5,1.5)--(2,1.5)node[below]{$\gamma_4$};
		\end{tikzpicture}
		\caption{The contour of $(z+\iota \frac{x}{a})^me^{-\frac{1}{2}az^2}$ when $x>0$ (left) and $x<0$ (right)}
		\label{contour2}
	\end{figure}
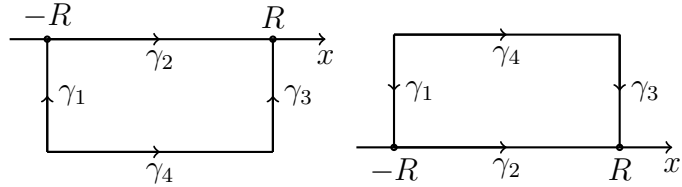 
	Because $F(z)$ is analytic on $\mathbb{C}$, we have 
	\[\int_{\gamma_1}F(z)dz+\int_{\gamma_2}F(z)dz=\int_{\gamma_4}F(z)dz+\int_{\gamma_3}F(z)dz,\]
	where 
	\begin{align*}
		\int_{\gamma_1}F(z)dz&=\iota\sgn(x)\int_0^{\frac{|x|}{a}}\big(-R+\iota\sgn(x)t\big)^m\exp\Big(-\frac{1}{2}a\big(-R+\iota\sgn(x)(t-\frac{|x|}{a})\big)^2\Big)dt,\\\int_{\gamma_3}F(z)dz&=\iota\sgn(x)\int_0^{\frac{|x|}{a}}\big(R+\iota\sgn(x)t\big)^m\exp\Big(-\frac{1}{2}a\big(R+\iota\sgn(x)(t-\frac{|x|}{a})\big)^2\Big)dt,\\\int_{\gamma_2}F(z)dz&=\int_{-R}^{R}(t+\iota \frac{x}{a})^me^{-\frac{1}{2}at^2}dt,\int_{\gamma_4}F(z)dz={e^{\frac{x^2}{2a}}}\int_{-R}^{R}t^{m}e^{-\frac{1}{2}at^2}e^{\iota x t}dt.
	\end{align*} 
	Let $R\to+\infty$, because for any $y\in\R$
	\begin{equation*}
		\begin{split}
			\sup_{0\le{t}\le{\frac{|x|}{a}}}\Big|\big(y+\iota\sgn(x)t\big)^m\exp\Big(-\frac{1}{2}a\big(y+\iota\sgn(x)(t-\frac{|x|}{a})\big)^2\Big)\Big|\le|y+\frac{|x|}{a}|^{m}e^{-\frac12ay^2+\frac12\frac{|x|^2}{a}},
		\end{split}
	\end{equation*}
	we have $\displaystyle\lim\limits_{R\to+\infty}\int_{\gamma_1}F(z)dz=\lim\limits_{R\to+\infty}\int_{\gamma_3}F(z)dz=0$ by dominated convergence theorem, which completes the proof.
\end{proof}
\section{Appendix}
In this part we will give a sufficient condition for the self-similar Gaussian process to possess the strong local nondeterminism property (\ref{slndp}) like \cite{hlx2023}. Assume that $Z=\{Z_t,\, t\geq 0\}$ is a one-dimensional centered Gaussian process satisfying the self-similarity with index $H\in (0,1)$ and $Y=\{Y_t=e^{-Ht}Z_{e^t},\, t\in\mathbb{R}\}$. Clearly, $Y$ is a one-dimensional centered Gaussian stationary process with covariance function $r(t)=\mathbb{E}[Y_0Y_t]=e^{-Ht}\mathbb{E}[Z_1Z_{e^t}]$ for any $t\in\mathbb{R}$, where $r(\cdot)$ is an even function on $\mathbb{R}$.
\begin{theorem} \label{glndp} Suppose that $r(t)$ is a non-negative decreasing function on $[0,\infty)$, $r(\cdot)\in L^1(\mathbb{R})$ and there exists a positive constant $c$ such that 
	\[
	\int^{\infty}_0 r(t)\cos(t\lambda) dt\geq \frac{c}{(|\lambda|+1)^{1+2H}}
	\]
	for all $\lambda\in\mathbb{R}$. Then there exists a positive constant $\kappa_{H}$ such that for any integer $m\ge 1$, any times $0=s_0<s_1,\dots,s_m,t<\infty$,  
	\begin{align} \label{slndpb}
		\Var\Big(Z_t| Z_{s_1},\dots, Z_{s_m}\Big)\geq\kappa_H \min\{|t-s_1|^{2H},\cdots,|t-s_m|^{2H},t^{2H}\}.
	\end{align} 
\end{theorem}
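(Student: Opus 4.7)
The plan is to use the Lamperti-type self-similarity to reduce the claim to a local-nondeterminism estimate for the stationary process $Y$, and then prove that estimate by a Fourier-analytic reproducing-kernel lower bound driven by the spectral lower bound hypothesized on $r$. Since $Z_{t}=t^{H}Y_{\log t}$ is a deterministic linear bijection between $(Z_{t})_{t>0}$ and $(Y_{u})_{u\in\R}$, conditioning on $Z_{s_1},\dots,Z_{s_m}$ is the same as conditioning on $Y_{u_1},\dots,Y_{u_m}$ with $u_i=\log s_i$, and
\begin{equation*}
\Var\big(Z_t\mid Z_{s_1},\dots,Z_{s_m}\big)=t^{2H}\,\Var\big(Y_{u}\mid Y_{u_1},\dots,Y_{u_m}\big),\qquad u=\log t.
\end{equation*}
Thus it suffices to establish the stationary bound
\begin{equation*}
\Var\big(Y_{u}\mid Y_{u_1},\dots,Y_{u_m}\big)\ge c_{H}\,\delta^{2H},\qquad \delta:=1\wedge\min_{1\le i\le m}|u-u_i|.
\end{equation*}
An elementary comparison (using $1-e^{-x}\asymp x$ and $e^{x}-1\asymp x$ on $x\in[0,1]$) gives $t\cdot(|u-u_i|\wedge 1)\ge c(|t-s_i|\wedge t)$, so multiplying the stationary bound by $t^{2H}$, and noting that the cap $\delta\le 1$ already handles the $s_0=0$ contribution in \eqref{slndpb}, recovers the target inequality.

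For the stationary bound I would exploit the spectral representation. Since $r$ is even and $r\in L^{1}(\R)$, Bochner's theorem gives $Y$ a spectral density $f(\lambda)=\pi^{-1}\int_{0}^{\infty}r(t)\cos(t\lambda)\,dt$, and the hypothesis is precisely $f(\lambda)\ge c\,(1+|\lambda|)^{-1-2H}$. The spectral representation $Y_{u}=\int_{\R}e^{\iota u\lambda}\,dW(\lambda)$ with control measure $f(\lambda)\,d\lambda$ identifies $\Var(Y_u\mid Y_{u_i})$ with the squared $L^{2}(f\,d\lambda)$-distance from $\lambda\mapsto e^{\iota u\lambda}$ to $\overline{\mathrm{span}}\{\lambda\mapsto e^{\iota u_j\lambda}\}$; a Cauchy--Schwarz dualization with any $\rho\in L^{2}(f\,d\lambda)$ orthogonal (in $L^{2}(f\,d\lambda)$) to each $e^{\iota u_j\lambda}$ then gives
\begin{equation*}
\Var\big(Y_{u}\mid Y_{u_i}\big)\ge\frac{\big|\int_{\R}\rho(\lambda)\,e^{-\iota u\lambda}\,f(\lambda)\,d\lambda\big|^{2}}{\int_{\R}|\rho(\lambda)|^{2}\,f(\lambda)\,d\lambda}.
\end{equation*}

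The test function is produced by rescaling a fixed Schwartz bump. Fix once and for all a real Schwartz function $\phi$ with $\phi(0)=1$ and $\supp\phi\subset[-1/2,1/2]$, set $\psi(x):=\phi((x-u)/\delta)$, and choose $\rho:=\hat{\psi}/f$. Because $\supp\psi\subset[u-\delta/2,u+\delta/2]$ and $|u-u_i|\ge\delta$, Fourier inversion gives $\int\rho\,e^{-\iota u_i\lambda}f\,d\lambda=2\pi\psi(u_i)=0$ for every $i$ and $\int\rho\,e^{-\iota u\lambda}f\,d\lambda=2\pi\psi(u)=2\pi$, supplying the numerator. For the denominator, the bound $f^{-1}(\lambda)\le c^{-1}(1+|\lambda|)^{1+2H}$ together with the substitution $\mu=\delta\lambda$ and the rapid decay of $\hat{\phi}$ yield $\int|\hat{\psi}(\lambda)|^{2}f^{-1}(\lambda)\,d\lambda\le C_{H,\phi}\,\delta^{-2H}$ whenever $\delta\le 1$. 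Substituting into the duality bound gives $\Var(Y_u\mid Y_{u_i})\ge c_H\delta^{2H}$, which combined with the first paragraph yields \eqref{slndpb}.

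The main obstacle I anticipate is the rigorous handling of the Fourier-duality step when $f^{-1}$ has polynomial growth: one must verify that $\rho=\hat{\psi}/f$ genuinely lies in $L^{2}(f\,d\lambda)$ (which reduces to the finiteness of $\int|\hat{\psi}|^{2}f^{-1}\,d\lambda$ just estimated) and that orthogonality of $\rho$ to each of the generators $e^{\iota u_j\lambda}$ upgrades to orthogonality to their closed linear span in $L^{2}(f\,d\lambda)$ (a standard density argument). Both points are granted by the Schwartz class of $\psi$ together with the spectral lower bound, but they are the crux of the proof; the Lamperti reduction and elementary scale comparison bookending the argument are routine by comparison.
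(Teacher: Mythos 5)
Your proposal is correct and rests on the same two pillars as the paper's argument: the self-similarity/Lamperti reduction to the stationary process $Y_u=e^{-Hu}Z_{e^u}$, and a spectral lower-bound argument driven by $f(\lambda)\ge c(1+|\lambda|)^{-1-2H}$ — the paper outsources that second step by citing Lemma 1 of \cite{cd1982}, and what you write out (duality with $\rho=\hat\psi/f$, a bump $\psi$ supported in $[u-\delta/2,u+\delta/2]$ so that $\psi(u_i)=0$, and $\int|\hat\psi|^2f^{-1}\,d\lambda\le C\delta^{-2H}$) is exactly that argument made explicit. The execution of the reduction differs, though: the paper enlarges the conditioning $\sigma$-field to the continuum $\{Z_s:\ |s-t|\ge r\}$ (which only lowers the conditional variance), splits into the cases $t<\min_j s_j$ and $t>\min_j s_j$, scales to $t=1$, and uses $\ln(1+r')>r'/2$, $\ln(1-r')<-r'/2$ to arrive at $\Var(Y_0\mid Y_u,\ |u|\ge r'/2)$; you instead keep the finite conditioning set, transport it directly via $u_i=\log s_i$, and convert logarithmic to linear distances through $t\,(|\log(t/s_i)|\wedge1)\asymp |t-s_i|\wedge t$, the cap at $1$ producing the $t^{2H}$ term in the minimum. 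Your route is somewhat more self-contained and avoids the case analysis and continuum conditioning (and, since the span of finitely many exponentials is finite-dimensional, the closure/density worry you flag is vacuous), while the paper's enlargement trick lets it quote the stationary lemma essentially verbatim; the only housekeeping you should add is to dispose of the trivial case $t\in\{s_1,\dots,s_m\}$ (where both sides vanish) before assuming $\delta>0$, as the paper does.
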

\begin{proof}
	By assumption, the spectral density $f$ of $Y$ has the expression 
	\begin{align*}  
		f(\lambda)=\frac{1}{\pi}\int^{\infty}_0 r(t)\cos(t\lambda) dt.
	\end{align*}
	Without loss of generality we can assume that $t\notin\{s_1,s_2,\cdots,s_n\}$. For the case $t<\min\limits_{1\le j\le m}s_j$, denote $$r=\min\{s_1-t,\cdots,s_m-t,t\}.$$ Then by the self-similarity of $Z$,
	\begin{align*}
		\Var\Big(Z_t| Z_{s_1},\dots, Z_{s_m}\Big)&\ge\Var\Big(Z_t| Z_{s}, s\ge t+r\Big)\\
		&=t^{2H}\Var\Big(Z_1| Z_{s}, s\ge1+r'\Big)\\
		&=t^{2H}\Var\Big(Y_0| Y_{\ln s}, s\ge1+r'\Big)\\
		&\geq t^{2H}\Var\Big(Y_0| Y_u, |u|\geq \frac{r'}{2}\Big),
	\end{align*}
	where $r'=r/t\in(0,1]$ and in the last inequality we use $\ln s\ge\ln (1+r')>\frac{r'}{2}$ for $r'\in (0,1]$. For the case $t>\min\limits_{1\le j\le m}s_j$, denote 
	$$r=\min\{|s_1-t|,\cdots,|s_m-t|,t\}.$$ Similarly, 	\begin{align*}
		\Var\Big(Z_t| Z_{s_1},\dots, Z_{s_m}\Big)&\ge\Var\Big(Z_t| Z_{s}, s\ge t+r\text{ or }0<s\le t-r\Big)\\
		&=t^{2H}\Var\Big(Z_1| Z_{s}, s\ge1+r'\text{ or }0<s\le1-r'\Big)\\
		&=t^{2H}\Var\Big(Y_0| Y_{\ln s}, s\ge1+r'\text{ or }0<s\le1-r'\Big)\\
		&\geq t^{2H}\Var\Big(Y_0| Y_u, |u|\geq \frac{r'}{2}\Big),
	\end{align*}
	where $r'=r/t\in(0,1)$ and in the last inequality we use $\ln (1+r')>\frac{r'}{2}$ and $\ln(1-r')<-\frac{r'}{2}$ for $r'\in (0,1)$. The rest of the proof can be obtained just using similar arguments in the proof of Lemma 1 in \cite{cd1982}.
\end{proof}

The following two corollaries show that bi-fractional Brownian motion $B^{H_0, K_0}$ and sub-fractional Brownian motion $S^{H}$ satisfy the strong local nondeterminism property (\ref{slndp}).

\begin{corollary} \label{bfbm}
	Assume that $B^{H_0, K_0}$ is the one-dimensional bi-fractional Brownian motion. Then there exists a positive constant $\kappa_{H_0, K_0}$ such that for any integer $m\ge 1$, any times $0=s_0<s_1,\dots,s_m,t<\infty$,  
	\begin{align*}  
		\Var\Big(B^{H_0, K_0}_t| B^{H_0, K_0}_{s_1},\dots, B^{H_0, K_0}_{s_m}\Big)\geq \kappa_{H_0, K_0} \min\{|t-s_1|^{2H_0K_0},\cdots,|t-s_m|^{2H_0K_0},t^{2H_0K_0}\}. 
	\end{align*} 
\end{corollary}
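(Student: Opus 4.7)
The plan is to apply Theorem \ref{glndp} to $Z_t = B^{H_0, K_0}_t$ with self-similarity index $H = H_0 K_0 \in (0,1)$. Self-similarity $B^{H_0, K_0}_{\lambda t} \stackrel{d}{=} \lambda^H B^{H_0, K_0}_t$ is immediate from the covariance formula. It remains to check that $r(t) = e^{-Ht}\mathbb{E}[Z_1 Z_{e^t}]$ is non-negative and decreasing on $[0,\infty)$, lies in $L^1(\mathbb{R})$, and that its Fourier cosine transform is bounded below by $c(|\lambda|+1)^{-(1+2H)}$.

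First I would simplify $r$ using the identities $1 + e^{2H_0 t} = 2e^{H_0 t}\cosh(H_0 t)$ and $e^t - 1 = 2e^{t/2}\sinh(t/2)$, which yield
\begin{equation*}
r(t) = [\cosh(H_0 t)]^{K_0} - 2^{-K_0}[2\sinh(t/2)]^{2H_0 K_0} = 2^{-K_0}e^{Ht}\bigl[(1+e^{-2H_0 t})^{K_0} - (1-e^{-t})^{2H}\bigr].
\end{equation*}
Non-negativity reduces to $(1+e^{2H_0 t})^{K_0} \geq (e^t - 1)^{2H_0 K_0}$, which follows from $(e^t-1)^{2H_0} \leq e^{2H_0 t} \leq 1+e^{2H_0 t}$. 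Integrability on $\mathbb{R}$ comes from a binomial expansion of the second form, giving $r(t) = O(e^{-(2-K_0)H_0 t} + e^{-(1-H)t})$ as $t\to\infty$, together with evenness of $r$. Monotonicity comes from computing $r'(t)$: after factoring out $H_0 K_0$, the inequality $r'(t) \leq 0$ reduces to $[\cosh(H_0 t)]^{K_0-1}\sinh(H_0 t) \leq 2^{2H-K_0}[\sinh(t/2)]^{2H-1}\cosh(t/2)$, which can be verified by analysing the small-$t$ and large-$t$ asymptotics separately (the right side dominates near zero because of the $t^{2H-1}$ factor, while at infinity both sides share the leading behaviour $2^{-K_0}e^{Ht}$ and the next-order correction has the correct sign).

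The main obstacle is the spectral lower bound. For this I would invoke the decomposition of bi-fBm (see e.g. \cite{hv2003, bgt2004}) as
\begin{equation*}
B^{H_0,K_0}_t = C_1 X_t + C_2 B^H_t,
\end{equation*}
where $B^H$ is a standard fBm with Hurst index $H = H_0 K_0$ and $X$ is an independent Gaussian process with a smoother integral representation. Stationarization preserves the decomposition, so the spectral density of $Y_t = e^{-Ht}Z_{e^t}$ splits as $f(\lambda) = C_1^2 f_X(\lambda) + C_2^2 f_{\mathrm{fBm}}(\lambda)$ with both densities non-negative. The classical fBm spectral density is continuous and strictly positive on $\mathbb{R}$ with $f_{\mathrm{fBm}}(\lambda) \asymp |\lambda|^{-(1+2H)}$ at high frequencies, which furnishes the required lower bound $c(|\lambda|+1)^{-(1+2H)}$; the $f_X$ contribution is only helpful. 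Theorem \ref{glndp} then delivers the SLND property with index $H = H_0 K_0$.
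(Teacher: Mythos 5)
Your overall strategy --- apply Theorem \ref{glndp} after checking self-similarity, non-negativity, monotonicity and integrability of $r$, plus the spectral lower bound --- is exactly the route the paper takes (the paper simply outsources these verifications to Corollary 5.2 of \cite{hlx2023}). Your computation of $r$, the non-negativity and the $L^1$ bound are fine. But two steps are not actually established. First, monotonicity: reducing $r'(t)\le 0$ to $[\cosh(H_0t)]^{K_0-1}\sinh(H_0t)\le 2^{2H-K_0}[\sinh(t/2)]^{2H-1}\cosh(t/2)$ is correct, but checking the asymptotics at $t\to0$ and $t\to\infty$ does not prove an inequality on all of $(0,\infty)$, especially here where both sides share the leading behaviour $2^{-K_0}e^{Ht}$ at infinity and become equal in the boundary case $K_0=1$, $H_0\to1$; a global argument is needed (for instance, substitute $u=e^{-t}$ and prove $(1+u^{2H_0})^{K_0-1}(1-u^{2H_0})\le(1-u)^{2H-1}(1+u)$ for all $u\in(0,1)$).

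Second, and more seriously, the spectral lower bound is based on a decomposition that does not exist in the direction you state. The Lei--Nualart decomposition says that $C_1X_t+B^{H_0,K_0}_t$ has the law of $C_2B^{H}_t$ with $X$ independent of $B^{H_0,K_0}$; the covariance $\Gamma(1-K_0)K_0^{-1}\big(t^{K_0}+s^{K_0}-(t+s)^{K_0}\big)$ of $X$ (after the time change $t\mapsto t^{2H_0}$) is positive semidefinite, while its negative is not, so bi-fBm cannot be written as fBm plus an \emph{independent} smooth process. Consequently, at the level of the Lamperti-stationarized spectral densities one gets $f_{\mathrm{bi}}(\lambda)=C_2^2f_{\mathrm{fBm}}(\lambda)-C_1^2f_{X}(\lambda)$: the smooth component is subtracted, not added, so it is not ``only helpful.'' To conclude $f_{\mathrm{bi}}(\lambda)\ge c(1+|\lambda|)^{-(1+2H)}$ you must additionally show that $C_1^2f_X(\lambda)$ decays strictly faster than $|\lambda|^{-(1+2H)}$ (or is dominated by a definite fraction of $C_2^2f_{\mathrm{fBm}}(\lambda)$ at high frequencies). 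This is true, because the stationarized $X$-component has a very smooth covariance and hence a rapidly decaying spectrum, but it requires a proof; as written, the central step of your argument rests on a false identity.
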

\begin{proof}
	Using Theorem \ref{glndp} and the arguments in the proof of Corollary 5.2 in \cite{hlx2023} gives the desired result.
\end{proof}
\begin{corollary} \label{sfbm}
	Assume that $S^H$ is the one-dimensional sub-fractional Brownian motion. Then there exists a positive constant $\kappa_{H}$ such that for any integer $m\ge 1$, any times $0=s_0<s_1,\dots,s_m,t<\infty$,  
	\begin{align*}  
		\Var\Big(S^H_t| S^H_{s_1},\dots, S^H_{s_m}\Big)\geq \kappa_{H}  \min\{|t-s_1|^{2H},\cdots,|t-s_m|^{2H},t^{2H}\}. 
	\end{align*} 
\end{corollary}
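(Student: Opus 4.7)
The plan is to reduce the claim to Theorem \ref{glndp} with $Z = S^H$. I first form the associated stationary Gaussian process $Y_t = e^{-Ht}S^H_{e^t}$, $t \in \mathbb{R}$, and compute its covariance $r(u) = \mathbb{E}[Y_0 Y_u]$. Starting from the sub-fBm covariance and using the identities $1+e^u = 2e^{u/2}\cosh(u/2)$ and $e^u-1 = 2e^{u/2}\sinh(u/2)$, for $u \geq 0$ one obtains
\begin{equation*}
r(u) = e^{-Hu} + e^{Hu} - 2^{2H-1}\bigl[\cosh^{2H}(u/2) + \sinh^{2H}(u/2)\bigr],
\end{equation*}
extended as an even function on $\mathbb{R}$.

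To verify the hypotheses of Theorem \ref{glndp}, I would exploit the decomposition $\mathrm{Cov}(S^H_t,S^H_s) = \mathrm{Cov}(B^H_t,B^H_s) + \tfrac12(t^{2H}+s^{2H}) - \tfrac12(t+s)^{2H}$, where $B^H$ is the standard one-dimensional fBm. Applying the log-time transform to both sides writes $r(u) = r_{\mathrm{fBm}}(u) + r_{\mathrm{pert}}(u)$ with the perturbation
\begin{equation*}
r_{\mathrm{pert}}(u) = \cosh(Hu) - 2^{2H-1}\cosh^{2H}(u/2),
\end{equation*}
which is smooth on $\mathbb{R}$. A Taylor expansion in the small parameter $e^{-u}$ gives $r_{\mathrm{pert}}(u) = O\bigl(e^{-\min(H,\,1-H)u}\bigr)$ at infinity, and the analogous expansion for $r_{\mathrm{fBm}}(u)$ is classical; consequently $r \in L^1(\mathbb{R})$. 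Non-negativity and monotone decrease of $r$ on $[0,\infty)$ can then be read off by splitting into the subcases $H \leq 1/2$ and $H > 1/2$ and invoking convexity (resp.\ concavity) of $x \mapsto x^{2H}$ on $(0,\infty)$ together with the explicit formula for $r(u)$.

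The main technical step, and the main obstacle, is the spectral-density lower bound $\int_0^\infty r(t)\cos(t\lambda)\,dt \geq c/(|\lambda|+1)^{1+2H}$. For this I would proceed by comparison with the fBm case that is covered in \cite{hlx2023}: since $r_{\mathrm{pert}}$ is smooth and decays exponentially at infinity, repeated integration by parts shows that its cosine transform decays faster than any polynomial in $|\lambda|$, hence is $o((|\lambda|+1)^{-1-2H})$ as $|\lambda|\to\infty$. Combined with the lower bound $\int_0^\infty r_{\mathrm{fBm}}(t)\cos(t\lambda)\,dt \geq c/(|\lambda|+1)^{1+2H}$ from the fBm analysis underlying Corollary \ref{bfbm} (case $K_0 = 1$), this yields the desired bound for $r$ at large $|\lambda|$; on any compact $\lambda$-range a direct continuity and positivity check for the spectral density of $Y$ completes the verification. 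Theorem \ref{glndp} then delivers the claimed SLND inequality for $S^H$.
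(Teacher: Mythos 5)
Your overall route is the same as the paper's: reduce to Theorem \ref{glndp} applied to $Z=S^H$ (the paper itself simply imports the verification of the hypotheses from Corollary 5.3 of \cite{hlx2023}). Your covariance algebra is correct: the formula for $r(u)$, the decomposition $r=r_{\mathrm{fBm}}+r_{\mathrm{pert}}$ with $r_{\mathrm{pert}}(u)=\cosh(Hu)-2^{2H-1}\cosh^{2H}(u/2)$, the $O(e^{-\min(H,1-H)u})$ decay, and the large-$|\lambda|$ comparison (integration by parts for the smooth, even, exponentially decaying $r_{\mathrm{pert}}$, combined with the fBm lower bound underlying Corollary \ref{bfbm}) are all sound in principle.

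However, two hypotheses of Theorem \ref{glndp} are not actually established by your sketch. First, the theorem requires $r$ to be non-negative \emph{and decreasing} on $[0,\infty)$; non-negativity does follow from convexity/concavity of $x\mapsto x^{2H}$, but monotone decrease of $u\mapsto e^{-Hu}\E\big[S^H_1S^H_{e^u}\big]$ cannot simply be ``read off'' from that — it requires an explicit derivative computation, which is part of what Corollary 5.3 of \cite{hlx2023} supplies. Second, and more seriously, your argument gives nothing for the spectral bound when $|\lambda|$ lies in a compact set: there the cosine transform of $r_{\mathrm{pert}}$ is a fixed $O(1)$ function, not an error term, so the decomposition $f=f_{\mathrm{fBm}}+f_{\mathrm{pert}}$ does not force $f\ge c>0$; and ``a direct continuity and positivity check'' is not a proof, because Bochner's theorem only yields $f\ge0$ and a spectral density may vanish at individual frequencies (a non-negative decreasing $r$ alone does not guarantee a positive cosine transform — think of an indicator-type profile). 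Strict positivity of $\int_0^\infty r(t)\cos(t\lambda)\,dt$ at every fixed $\lambda$ is precisely the nontrivial content of the hypothesis at low frequencies and needs a genuine argument (e.g.\ convexity of $r$ plus a P\'olya-type representation, or the explicit estimates carried out in \cite{hlx2023}); as written, this step is a gap.
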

\begin{proof}
	Using Theorem \ref{glndp} and the arguments in the proof of Corollary 5.3 in \cite{hlx2023} gives the desired result.
\end{proof}

	$\begin{array}{cc}
		\begin{minipage}[t]{1\textwidth}
			{\bf Minhao Hong}\\
			School of Science, Shanghai Maritime University, Shanghai 201306, China\\
			\texttt{hongmhmath@163.com}
		\end{minipage}
		\hfill
	\end{array}$
	
	\bigskip
	$\begin{array}{cc}
		\begin{minipage}[t]{1\textwidth}
			{\bf Qian Yu}\\
			Department of Mathematics, Nanjing University of Aeronautics and Astronautics, Nanjing
			211106, China\\
			\texttt{qyumath@163.com}
		\end{minipage}
		\hfill
	\end{array}$
\end{document}